\documentclass[12pt]{amsart}
\usepackage{mathrsfs}
\usepackage{amssymb}
\usepackage{hyperref}
\hypersetup{hidelinks}
\usepackage{mathtools}
\usepackage{enumitem}
\newcommand{\Cramer}{\operatorname{Cram\acute{e}r}}
\newcommand{\Siegel}{\operatorname{Siegel}}
\usepackage[a4paper,inner=3cm,outer=3cm,top=4cm,bottom=4cm,pdftex]{geometry}
\usepackage{fancyhdr}
\newcommand{\C}{\mathbb{C}}
\newcommand{\Z}{\mathbb{Z}}
\newcommand{\E}{\mathbb{E}}
\newcommand{\1}{\mathbf{1}}

\let\oldpmod\pmod
\renewcommand{\pmod}[1]{\hspace{-0.12cm}\oldpmod {#1}}

\theoremstyle{plain}
\newtheorem{theorem}{Theorem}[section]
\newtheorem{proposition}[theorem]{Proposition}
\newtheorem{lemma}[theorem]{Lemma}
\newtheorem{corollary}[theorem]{Corollary}
\newtheorem{definition}[equation]{Definition}
\newtheorem{remark}[theorem]{Remark}

\numberwithin{equation}{section}
\title{Quantitative bounds for sets lacking polynomial progressions with shifted prime difference}

\author[B. Krause]{Ben Krause}
\address{BK: Department of Mathematics, University of Bristol, BS8 1QU, UK}
\email{ben.krause@bristol.ac.uk}

\author[H. Mousavi]{Hamed Mousavi}
\address{HM: Department of Mathematics, University of Bristol, BS8 1QU, UK}
\email{gj23799@bristol.ac.uk}

\author[T. Tao]{Terence Tao}
\address{TT: Department of Mathematics, University of California, Los Angeles, CA 90095-1555, USA}
\email{tao@math.ucla.edu}

\author[J. Ter\"{a}v\"{a}inen]{Joni Ter\"{a}v\"{a}inen}
\address{JT: Department of Pure Mathematics and Mathematical Statistics, University of Cambridge, CB3 0WB, UK}
\email{joni.p.teravainen@gmail.com}

\begin{document}

\begin{abstract}
We prove quantitative polynomial Szemer\'edi-type theorems involving polynomial progressions with shift parameter restricted to the set of shifted primes $\mathbb{P}-1$. The types of configurations covered are distinct degree progressions and progressions involving integer multiples of a fixed polynomial.

For nonlinear configurations of length at least three, these results provide the first quantitative versions of such theorems. In the linear case, our results improve on work by the last two authors. Our density bounds are strongest in the case of distinct degree polynomials, where they give polylogarithmic bounds, of the same shape as recent bounds by Shao and Wang with integer shifts.

The proofs combine recent quantitative results for polynomial configurations in the integers with quantitative Gowers uniformity bounds of the primes. For multiples of a fixed polynomial, we adapt a comparison argument of Altman and Sawhney to obtain uniformity over the polynomial families produced by the $W$-trick. For distinct degree progressions, we establish a comparison between prime-weighted and unweighted polynomial counts that is uniform throughout the density increment argument and accounts for a possible Siegel zero.
\end{abstract}

\maketitle

\tableofcontents

\section{Introduction}

Let $P_1,\ldots, P_{k-1}\in \mathbb{Z}[y]$ be polynomials in one variable with integer coefficients and zero constant term. A celebrated result of Bergelson and Leibman~\cite{bergelson} states that any subset of $\mathbb{N}=\{1,2,\ldots\}$ of positive asymptotic upper density contains \emph{polynomial progressions} of the form $x,x+P_1(d),\ldots, x+P_{k-1}(d)$ with $x,d\in \mathbb{N}$.

By a \emph{polynomial progression with shifted prime difference}, we mean a pattern of the form 
$x,x + P_1(p-1), \ldots, x + P_{k-1}(p-1)$, where $p$ is a prime (so that $p-1$ lies in the shifted primes ${\mathbb P}-1 = \{1,2,4,6,10,\dots\}$). Extending the Bergelson--Leibman theorem, it was shown by Wooley and Ziegler~\cite{Wooley-Ziegler} that if $A\subset [N]\coloneqq \{1,\dots,N\}$ does not contain any such progressions, then $|A|=o(N)$ for a fixed choice of $P_1,\dots,P_{k-1}$. See also~\cite{fhk1},~\cite{fhk2} for related work. However, as all these arguments were based on ergodic methods, no quantitative rate of decay for the $o(\cdot)$ notation was provided.

\subsection{Main results}

The main results of this paper are as follows. Firstly, in the linear case we can prove the following density bound for sets missing progressions with shifted prime difference. 

\begin{corollary}[Linear progressions]\label{thm_progression}
Let $N$ be sufficiently large, let $k\geq 3$, and let $A\subseteq [N]$ be a set lacking progressions of the form
\begin{align*}
    x,x+p-1,\ldots,x+(k-1)(p-1)
    \textup{ with } x\in\mathbb N \textup{ and } p\in\mathbb P.
\end{align*}
Then there exists $c_{k,\mathrm{lin}}>0$ such that the following hold.
 \begin{enumerate}
    \item If $k=3$, we have
    \begin{align*}
    |A|\ll N\exp(-(\log \log N)^{c_{3,\mathrm{lin}}}).    
    \end{align*}
    \item If $k\geq 4$, we have 
    \begin{align*}
    |A|\ll N\exp(-(\log \log \log N)^{c_{k,\mathrm{lin}}}).        
    \end{align*}
\end{enumerate}
\end{corollary}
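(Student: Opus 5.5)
This corollary is the special case $P_j(y)=jy$ of the result for integer multiples of a fixed polynomial (here $P(y)=y$). I would prove it by a transference from prime shifts to integer shifts, built around a $W$-trick.

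\emph{Step 1: $W$-trick.} Let $w$ be a slowly growing parameter and $W=\prod_{p\le w}p$. Take $A\subseteq[N]$ of density $\delta$ with no progressions $x,x+(p-1),\ldots,x+(k-1)(p-1)$. Since $p-1\equiv b-1 \pmod W$ for primes $p\equiv b\pmod W$, the shift $p-1$ is $W$-adically restricted. I would take $b=1$ and write $p-1=Wn$, so $p=Wn+1$. I then introduce the normalized weight
\[
\Lambda_{W,1}(n)=\frac{\varphi(W)}{W}\Lambda(Wn+1)
\]
on $[M]$, where $M\approx N/(kW)$. The weighted count
\[
\Lambda_{A}:=\sum_{x}\sum_{n\le M}\Lambda_{W,1}(n)\prod_{j=0}^{k-1}1_A(x+jWn)
\]
is zero up to the negligible contribution of prime powers. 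I compare it with the unweighted count $\sum_x\sum_{n\le M}\prod_j 1_A(x+jWn)$. The pattern $x+jWn$ is a linear pattern with common difference $Wn$. Splitting $x$ into residue classes mod $W$ reduces it to ordinary $k$-APs in sets $A_r\subseteq[N/W]$, and some $A_r$ has density $\ge\delta$.

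\emph{Step 2: generalized von Neumann and Gowers uniformity of primes.} By the standard Cauchy--Schwarz argument, the difference of the two counts is bounded by $N^2W^{-1}\,\|\Lambda_{W,1}-1\|_{U^{k-1}[M]}$, up to the usual normalization. For this to be legitimate one needs a pseudorandom majorant, or one can instead use the fact that the other functions are $1$-bounded and handle the weight as the only unbounded term. I would then invoke the quantitative Gowers uniformity of the $W$-tricked primes (Leng's bounds, or the $U^2$ bound via Vinogradov/Siegel--Walfisz for $k=3$). This gives $\|\Lambda_{W,1}-1\|_{U^{k-1}}\ll (\log\log N)^{-c}$ with $w$ chosen as a small power of $\log\log N$, or as large as $\exp((\log N)^c)$ if Siegel zeros are handled by excluding a modulus.

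\emph{Step 3: lower bound for the unweighted count.} A set of density $\delta$ in $[N/W]$ has at least $c(\delta)(N/W)^2$ $k$-APs. Here I would use the Kelley--Meka bound $c(\delta)\ge\exp(-O(\log(1/\delta)^{C}))$ for $k=3$, and Leng--Sah--Sawhney for $k\ge4$, which via Varnavides gives $c(\delta)\ge\exp(-\exp(\log(1/\delta)^{C}))$. Combining the steps: if $|A|$ is large then $c(\delta)W^{-1}N^2 > N^2W^{-1}(\log\log N)^{-c}$, which contradicts $\Lambda_A\approx 0$. Solving for $\delta$ gives
\[
\delta\ll\exp(-(\log\log\log N)^{c}) \quad\text{for } k=3,
\]
and an even weaker bound for $k\ge4$. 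This is too weak, so the $W$-trick must be pushed further: $W$ must be allowed to be as large as $\exp((\log N)^{c})$, so that the uniformity error becomes $\exp(-(\log N)^{c})$. Only then do we obtain $\delta\ll\exp(-(\log\log N)^{c})$ for $k=3$ and $\exp(-(\log\log\log N)^{c})$ for $k\ge4$.

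\emph{Main obstacle.} The hard step is obtaining Gowers uniformity of $\Lambda_{W,1}$ with error $\exp(-(\log N)^{c})$ while $W$ is large. Siegel-zero issues then appear. I would first try the approach suggested in the abstract: choose $W$ to avoid the exceptional modulus, or incorporate the Siegel correction into the model weight $\Lambda_{\Siegel}$ and show that the model is itself uniform enough after the $W$-trick. The model must also be uniform across all the polynomial (here linear) families that arise, which I would handle with the Altman--Sawhney-type comparison.
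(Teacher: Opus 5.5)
Your overall architecture is the paper's route specialised to $P(y)=y$: you restrict to $p\equiv1\pmod W$, use a weighted generalized von Neumann bound in which only the weight is unbounded, replace $\Lambda$ by the Siegel model via quantitative Gowers uniformity of the primes, and bound the unweighted count from below with Kelley--Meka or Leng--Sah--Sawhney. In the linear case $P_W(y)=y$ does not vary with $W$, so the Altman--Sawhney comparison you invoke at the end is not needed.

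\textbf{The quantitative core is wrong.} Taking $W\approx\exp((\log N)^c)$, i.e.\ $w\asymp(\log N)^c$, does not make $\|\frac{\phi(W)}{W}\Lambda(W\cdot+1)-1\|_{U^s}$ of size $\exp(-(\log N)^c)$. Only the step $\Lambda\to\Lambda_{\Siegel,\widetilde Q}$ has an error of that shape. Passing from the model on $W\mathbb Z+1$ to a constant costs $w^{-\eta}$. Every prime $p>w$ still contributes a local factor $1+O(p^{-2})$ to the cube correlations, and these multiply to $1+O(1/w)$. This bias is really present, for instance in the Fourier coefficients at $a/p$ for $p$ just above $w$. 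Letting $w$ itself be $\exp((\log N)^c)$ would make $W$ far larger than $N$. Had an error $\exp(-(\log N)^c)$ been available, your own computation would give much stronger bounds, e.g.\ $\delta\gg\exp(-(\log N)^{c'})$ for $k=3$, which shows the stated bounds come from different bookkeeping. The correct accounting is:
\begin{itemize}
\item the error is $O(\rho w^{-\eta})$ with $w=\lfloor(\log N)^\theta\rfloor$;
\item one requires $\mathfrak m(\delta)>w^{-\gamma}$ for a small fixed $\gamma>0$;
\item since $\log w\asymp\log\log N$, this yields exactly the two stated shapes.
\end{itemize}

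\textbf{The Siegel zero is the real obstacle, and you have not handled it.} You cannot choose $W$ to avoid $q_{\Siegel}$. $W$ must contain every prime up to $w$, so $q_{\Siegel}\mid W$ whenever its odd prime factors are all at most $w$. Moving $b$ away from $1$ would spread the pattern over several residue classes modulo $W$. The paper also takes $8\mid W$, so that the $2$-adic part of $q_{\Siegel}$ cannot leave an oscillating character on $W\mathbb Z+1$ with no saving; your primorial $W$ lacks this.

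When $q_{\Siegel}\mid W$ we have $\chi_{\Siegel}(Wn+1)=1$, and the normalized prime weight on $W\mathbb Z+1$ is close to $\rho=1-(WM+1)^{\beta-1}$, not to $1$. You then need two things:
\begin{itemize}
\item the relative estimate $\|\frac{\phi(W)}{W}\Lambda_{\Siegel,\widetilde Q}(W\cdot+1)-\rho\|_{U^s[M]}\ll\rho w^{-\eta}$, obtained by localizing the slowly varying factor $(Wn+1)^{\beta-1}$ near the top of the range;
\item the exceptional-zero bound $\rho\gg1-\beta\gg q_{\Siegel}^{-1}\ge W^{-1}=\exp(-O(w))$, so that the $\exp(-(\log N)^{c_1})$ Siegel-model error is $o(\rho w^{-\eta})$.
\end{itemize}
This comparison is precisely what forces $w=(\log N)^\theta$ with $\theta<c_1$. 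When $q_{\Siegel}\nmid W$, you need instead a complete character sum bound showing that the twisted term $\Lambda_{\Cramer,\widetilde Q}\,\chi_{\Siegel}\,(\cdot)^{\beta-1}$ on $W\mathbb Z+1$ has $U^s$-norm $O(w^{-\eta})$.

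A smaller point: for $\sum_x\sum_n g(n)\prod_{j<k}f_j(x+jn)$ the weight must be controlled in $U^k$, not $U^{k-1}$. A quadratic phase $g$ already defeats $U^2$ when $k=3$, so a Vinogradov-type $U^2$ estimate does not suffice there.
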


The result improves on previously known bounds, discussed below, for all $k\geq 3$, and is a consequence of Theorem~\ref{thm:fixed-polynomial-shifted-prime} below.

The next two main theorems concern polynomial progressions with shifted prime difference for distinct degree polynomials and for integer multiples of a fixed polynomial, respectively. To our knowledge, for configurations of length at least three, these are the first quantitative bounds beyond the linear case.

\begin{theorem}[Distinct degree progressions]\label{thm_poly}
Let $N$ be sufficiently large, and let $k\in \mathbb{N}$. Let $P_1,\ldots, P_{k-1}\in \mathbb{Z}[y]$ be fixed nonconstant polynomials having distinct degrees and zero constant term. Let $A\subseteq [N]$ be a set lacking progressions of the form 
\begin{align*}
&x,x+P_1(p-1),\ldots,x+P_{k-1}(p-1)
    \textup{ with } x\in\mathbb N \textup{ and } p\in\mathbb P,\\
&0,P_1(p-1),\ldots,P_{k-1}(p-1)
    \textup{ are pairwise distinct}.
\end{align*}
 Then there exists $c_*=c(k,P_1,\ldots, P_{k-1})>0$ such that
\begin{align*}
    |A|\ll N(\log N)^{-c_*}.
\end{align*}
\end{theorem}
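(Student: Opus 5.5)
We will run a density increment argument modelled on the Peluse--Prendiville / Shao--Wang treatment of distinct degree progressions with integer shifts, replacing the integer shift $d$ by a weighted shift. Fix a large parameter $W=\prod_{p\le w}p$ with $w$ a small power of $\log N$ (allowing for an exceptional Siegel modulus, which we incorporate into $W$ or treat via a Siegel-corrected model). Put
\[
\Lambda_{b,W}(n)=\frac{\phi(W)}{W}\Lambda(Wn+b),\qquad b=1 ,
\]
so that $p-1=Wn$ with $p=Wn+1$. The progression $x,x+P_i(Wn)$ becomes a distinct degree progression in $n$ with polynomials $Q_i(n)=P_i(Wn)$. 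We define the counting operator
\[
\Lambda_{\mathrm{pr}}(f_0,\dots,f_{k-1})=\E_{x\in[N]}\E_{n\le M}\Lambda_{1,W}(n)\prod_{i}f_i(x+P_i(Wn)),
\]
with $M$ chosen so that $P_i(WM)\le \delta N$ for the top degree, and the analogous unweighted operator $\Lambda_{\mathrm{int}}$ with $\Lambda_{1,W}$ replaced by $1$. A set lacking the configuration has $\Lambda_{\mathrm{pr}}(1_A,\dots,1_A)$ only bounded by the trivial (degenerate) contribution, which is negligible.

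The key steps are: (i) A comparison estimate $|\Lambda_{\mathrm{pr}}(f_0,\ldots,f_{k-1})-\Lambda_{\mathrm{int}}(f_0,\ldots,f_{k-1})|\le \varepsilon$ for all 1-bounded $f_i$, with $\varepsilon$ a small power of $(\log N)^{-1}$ or a Siegel-corrected model. We would prove this via PET/van der Corput to bound the difference by a Gowers $U^s$ norm of $\Lambda_{1,W}-1$ on an interval of length $M$ (the coefficients of $Q_i$ grow with $W$, so the PET step must be uniform in these coefficients, essentially by working with $n$ directly and absorbing $W$ into dilations of $x$), and then invoke quantitative Gowers uniformity of the $W$-tricked von Mangoldt function (Leng / Tao--Teräväinen type bounds), where the Siegel zero contributes a character-twisted main term that we handle by adding $\chi_{\Siegel}$ to the model $\Lambda_{\Siegel}$. (ii) Apply the integer-shift distinct degree theorem (Peluse--Prendiville style / Shao--Wang) to $\Lambda_{\mathrm{int}}$: if $A$ has density $\delta$ and lies in a suitable progression, then either $\Lambda_{\mathrm{int}}(1_A,\dots)\gg \delta^{O(1)}$ or $A$ has a density increment $\delta\mapsto \delta+\delta^{O(1)}$ on a long subprogression with common difference a multiple of a power of $W$ and length $N^{c}$. (iii) Iterate: after each increment we restrict to a progression $a+qW'[N']$, and the shifted-prime configuration is preserved because polynomials with zero constant term map progressions with difference $q$ into themselves when the shift is divisible by $q$; this forces us to restrict $p\equiv 1\pmod{q}$, so the comparison in step (i) must hold uniformly for $\Lambda$ restricted to residue classes $1 \bmod qW$ with $q$ up to $\exp(O(\log(1/\delta)))$-size, i.e. $q\le (\log N)^{O(c)}$. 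The number of iterations is $\delta^{-O(1)}$, giving $\delta\ll(\log N)^{-c_*}$.

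The main obstacle is step (i) uniformly throughout the iteration: the moduli $q$ accumulated by the increment can be as large as a power of $\log N$, which lies at the edge of Siegel--Walfisz, so the prime uniformity must be stated with a Siegel-corrected model and polylogarithmic savings in the modulus, and the PET reduction must not lose factors depending on $q$ or $W$ beyond polynomial ones. I would first try to treat the Siegel character by noting $\chi_{\Siegel}$ is periodic with modulus dividing a fixed $q_0$, so it can be absorbed into the residue-class restriction at the cost of one further density increment or a change of the model weight, which contributes only a bounded multiplicative constant to the count.
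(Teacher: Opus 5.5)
Your overall architecture matches the paper's. You use a coefficient-uniform PET bound to reduce the prime weight to a Gowers norm on $q\mathbb Z+1$, a Siegel-corrected prime model, and a distinct degree density increment with the shift restricted to $p\equiv1\pmod q$. However, two steps fail as written.

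\textbf{The iteration count.} You take an additive increment $\delta\mapsto\delta+\delta^{O(1)}$, and hence $\delta^{-O(1)}$ iterations. Each increment passes to a progression of length only about $N_i^{\deg P_1/\deg P_{k-1}}$ (your $N^{c}$, with $c<1$ once $k\ge3$), and it replaces the modulus $q$ by $q^{O(1)}$. After $\delta^{-O(1)}$ steps one has $\log N_n\approx c^{\delta^{-O(1)}}\log N$ and $\log q_n\approx B^{\delta^{-O(1)}}\log q_0$. Keeping these in the admissible range forces $\delta\gg(\log\log N)^{-O(1)}$, not $(\log N)^{-c_*}$.

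The polylogarithmic bound needs a multiplicative increment $\delta\mapsto(1+c)\delta$. Then only $O(\log(1/\delta))$ steps occur, giving $\log N_n\ge\delta^{O(1)}\log N$ and $\log Q_n\le\delta^{-O(1)}$. The paper gets this from a discrepancy, not from a ``count is large or increment'' dichotomy:
\begin{itemize}
\item the prime-weighted count of $\1_A$ is negligible, while that of $\delta\1_{[N]}$ is $\gg\delta^k\E_{y\le M}\Lambda(qy+1)$ (Lemma~\ref{lem:full-set-discrepancy});
\item this is transferred to an unweighted discrepancy $\gg\delta^k$ (Proposition~\ref{lem:chaining});
\item the Shao--Wang weak regularity lemma with local factors of common modulus $q^B$ (Lemma~\ref{cl:short-scale-regularization}), plus telescoping, yields an atom of density $>(1+c)\delta$.
\end{itemize}
For this to run uniformly, every auxiliary modulus $q'\le\delta^{-O(1)}$ from the inverse theorem must divide $q$. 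That is why $q$ is saturated with $\mathcal L(\sigma)$, $\sigma=\delta^{-O(1)}$, and updated as $Q_{i+1}=Q_i^{B_i+1}$.

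\textbf{The Siegel zero.} It is not a bounded perturbation. The iteration forces $p\equiv1\pmod q$ with $q$ divisible by all small primes, so one may well have $q_{\Siegel}\mid q$. Then $\chi_{\Siegel}(p)=1$ on the whole progression, and by Lemma~\ref{l:landau-page-compatible}
\[
\E_{y\le M}\Lambda(qy+1)\approx\frac{q}{\phi(q)}\bigl(1-(qM+1)^{\beta-1}/\beta\bigr).
\]
The bracket is only known to be $\gg1-\beta\gg q^{-1}$, and it need not exceed any fixed power of $1/\log N$. Consequently:
\begin{itemize}
\item your absolute comparison $|\Lambda_{\mathrm{pr}}-\Lambda_{\mathrm{int}}|\le\varepsilon$ in step (i) is false in this case;
\item the claim that the character contributes ``only a bounded multiplicative constant'' is wrong;
\item you cannot escape to a residue class with $\chi_{\Siegel}=-1$, because the shift must stay divisible by the progression modulus.
\end{itemize}

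Every error must instead be relative to $\rho=1-(qM+1)^{\beta-1}$. The step $\Lambda\to\Lambda_{\Siegel,\widetilde Q}$ costs $\exp(-(\log N)^{5c'})$. This is acceptable only because $\rho\gg q^{-1}\ge\exp(-(\log N)^{2c'})$, which also dictates how large $w$ and the moduli may be. The $y$-dependent factor $1-(qy+1)^{\beta-1}$ must then be stripped off. The paper does this by partial summation, landing on an unweighted discrepancy at a slightly shorter scale $M_0\ge M\exp(-\sqrt{\log M})$. Since your inputs are nonnegative, you could instead restrict to $y\in(M/2,M]$, where the factor is $\asymp\rho$.

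When $q_{\Siegel}\nmid q$, the twisted term is killed by the character-sum estimate of Lemma~\ref{l:twisted-cramer}. Its saving comes from an odd prime of $q_{\Siegel}$ exceeding the saturation cutoff.

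Finally, the accumulated moduli are of size $\exp((\log N)^{O(c_*)})$, not $(\log N)^{O(c)}$; already your $W=\prod_{p\le w}p$ is about $\exp((\log N)^{\theta})$. So the prime input must be Landau--Page with the exceptional term, not Siegel--Walfisz.
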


For $k\geq 3$, this result matches the best currently known bound for integer shifts~\cite{ShW25}, apart from the value of the exponent, as discussed below.

Our result for progressions involving integer multiples of a fixed polynomial states the following.

\begin{theorem}[Multiples of a fixed polynomial]
\label{thm:fixed-polynomial-shifted-prime}
Let $N$ be sufficiently large, let $k\geq3$, and let
$P\in\mathbb Z[y]$ be a fixed nonzero polynomial satisfying $P(0)=0$.
Let $a_1,\ldots,a_{k-1}\in\mathbb Z\setminus\{0\}$ be fixed and distinct.
Let $A\subseteq[N]$ be a set lacking progressions of the form
\begin{align*}
    x,x+a_1P(p-1),\ldots, x+a_{k-1}P(p-1) \textup{ with } x\in \mathbb{N},\,  p\in \mathbb{P},\, P(p-1)\neq 0.
\end{align*}
There exists $c=c(P,a_1,\ldots,a_{k-1},k)>0$ such that the following bounds
hold.
\begin{enumerate}
\item If $P'(0)\ne0$ and $k=3$, then
$$
|A|\ll_{P,a_1,a_2}
N\exp\bigl(- (\log\log N)^c\bigr).
$$
\item If $P'(0)\ne0$ and $k\geq4$, then
$$
|A|\ll_{P,a_1,\ldots,a_{k-1}}
N\exp\bigl(- (\log\log\log N)^c\bigr).
$$
\item If $P'(0)=0$, then
$$
|A|\ll_{P,a_1,\ldots,a_{k-1}}
N(\log\log\log N)^{-c}.
$$
\end{enumerate}
\end{theorem}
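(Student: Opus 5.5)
We plan to deduce Theorem~\ref{thm:fixed-polynomial-shifted-prime} from quantitative Szemer\'edi-type bounds for the integer configuration $x, x+a_1Q(n),\dots,x+a_{k-1}Q(n)$, with $Q$ ranging over a family of polynomials produced by the $W$-trick, and a transference from the von Mangoldt weight to the constant weight. Fix $w$ slowly growing in $N$ and set $W=\prod_{p\le w}p$. Writing $p-1 = W m + b - 1$ with $(b,W)=1$, the shifted primes in this residue class are modelled by $\frac{\phi(W)}{W}\Lambda(Wm+b)$. We therefore consider the counting operator
\begin{align*}
\Lambda_{b}(f_0,\dots,f_{k-1}) = \E_{x\in[N]}\E_{m\le M} \frac{\phi(W)}{W}\Lambda(Wm+b)\prod_{i=0}^{k-1} f_i\bigl(x+a_iP(Wm+b-1)\bigr),
\end{align*}
with $a_0=0$ and $M\approx (N/W^{\deg P})^{1/\deg P}$. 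For $A$ lacking the configuration, $\Lambda_b(1_A,\dots,1_A)$ is negligible, since only the trivial contributions with $P(p-1)=0$ or prime powers survive.

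\textbf{Step 1 (von Neumann / comparison).} We write $\frac{\phi(W)}{W}\Lambda(Wm+b) = 1 + (\Lambda_{W,b}(m)-1)$. The error term is handled by a PET/van der Corput argument in the variable $m$: this bounds the error by a Gowers $U^s$ norm of $\Lambda_{W,b}-1$ on $[M]$, where $s$ depends on $k,\deg P$. Quantitative Gowers uniformity of the $W$-tricked primes then makes the error $O((\log N)^{-c})$ or better, with $w$ a small power of $\log\log N$ or so. The subtle point is that the PET argument must be uniform over the whole family $Q_b(m)=P(Wm+b-1)$, whose coefficients grow with $W$. Here we follow the comparison argument of Altman--Sawhney: we dominate the weighted count by the unweighted one using a pseudorandom majorant, with losses polynomial in $W$. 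This is acceptable because $W$ is tiny (at most $\exp(O(w))$).

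\textbf{Step 2 (integer count).} It remains to lower bound $\E_x\E_m\prod_i 1_A(x+a_iQ_b(m))$ for $|A|=\delta N$. When $P'(0)\ne 0$, we choose $b=1$, so that $Q_1(m)=P(Wm)=WmP'(0)+W^2(\cdots)$. Passing to progressions with common difference a multiple of $W$, i.e.\ restricting $x$ to cosets modulo $W$, reduces the problem to configurations $a_iR(m)$ with $R$ having nonzero linear coefficient. These are handled by Green--Tao type ($k=3$, Kelley--Meka-like quasi-polylog in the exponent) and Leng--Sah--Sawhney type ($k\ge4$) bounds for polynomial multiples of a linear form. This produces the bounds $\exp(-(\log\log N)^c)$ and $\exp(-(\log\log\log N)^c)$ once the density $\delta$ must exceed the error from Step~1. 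When $P'(0)=0$, we instead use the general bound for $x+a_iQ(m)$ with uniformity in the coefficients (a Peluse--Prendiville or Altman--Sawhney style result). This gives only a power of $\log\log\log N$, and the losses from $W$ propagate there.

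\textbf{Main obstacle.} The hardest step is Step~1: obtaining a generalized von Neumann inequality uniform in the family $Q_b$, with only polynomial dependence on $W$. This is necessary so that the $W$-trick parameters can be chosen compatibly with the weak density bounds from Step~2. Our first attempt will be to track coefficient dependence explicitly through PET induction, as in Altman--Sawhney.
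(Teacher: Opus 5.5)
Your outline ($W$-trick, transfer of the prime-weighted count to an unweighted one, then a quantitative integer count) has the right shape. Neither step works as you describe it, and the first omits the main arithmetic difficulty.

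\textbf{Step 1.} The claim that $\frac{\phi(W)}{W}\Lambda(W\cdot+b)-1$ is $O((\log N)^{-c})$ in $U^s[M]$ is false. Even for the Cram\'er model, the local factors at primes just above $w$ leave a $U^s$-discrepancy from $1$ of size roughly a power of $w^{-1}$. So comparing with the constant weight costs about $w^{-\eta}$ at best. Since the unweighted count is only known to be $\ge\mathfrak m(\delta)$, you need $\mathfrak m(\delta)>w^{-\gamma}$. All three bounds (1)--(3) then force $\log w\asymp\log\log N$, i.e.\ $w=(\log N)^\theta$; with $w$ a power of $\log\log N$, each bound loses an iterated logarithm.

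With $W=\exp(O((\log N)^\theta))$, you cannot rule out that the exceptional conductor divides $W$. In that case the weight on $W\mathbb Z+1$ is close not to $1$ but to $\rho=1-(WM+1)^{\beta-1}$, which can only be bounded below by $W^{-O(1)}$. Any absolute error term then swamps the main term. Your proposal never mentions the Siegel zero, so as written it is at best conditional.

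The paper handles this as follows:
\begin{enumerate}
\item It replaces $\Lambda$ by the Siegel model, using Lemma~\ref{l:prime-siegel-gowers}.
\item If $q_{\Siegel}\nmid W$, some odd prime $p_0>w$ divides the conductor, and complete character sums save $p_0^{-1/4}$. This uses $8\mid W$: with your primorial $W\equiv2\pmod{4}$, a conductor $4q_0$ with $q_0\mid W$ gives $\chi_{\Siegel}(Wm+b)=\pm(-1)^m$, which is not Gowers-uniform.
\item If $q_{\Siegel}\mid W$, localizing the slowly varying factor gives a \emph{relative} error.
\end{enumerate}
The outcome is that the weighted count equals $\rho\cdot(\text{unweighted count})+O(\rho w^{-\eta'})$. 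The bound $\rho\gg1-\beta\gg W^{-1}$ then makes the Siegel-model approximation error negligible.

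\textbf{Where the uniformity problem actually lies.} The generalized von Neumann bound in the weight variable needs no control of coefficients. Cauchy--Schwarz in $x$ over all of $\mathbb Z$ and PET differencing in $y$ give $\ll\|g\|_{U^s[M]}$, with $s$ and the constant depending only on the degrees and $k$ (Proposition~\ref{l:fixed-scale-gvn}). Conversely, the $W^{O(1)}$ loss you budget for there would be fatal, since $W=e^{O(w)}$ while the available saving is only $w^{-\eta}$.

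The genuine difficulty is Step~2, which you treat as routine. For nonlinear $P$ with $P'(0)\ne0$, the configuration $x+a_iP_W(y)$ with $P_W(y)=W^{-r}P(Wy)$ is not linear. So Kelley--Meka, Green--Tao and Leng--Sah--Sawhney do not apply after passing to cosets; that works only when $P$ is linear. The Altman--Sawhney theorem for a fixed polynomial also has constants and thresholds depending on the coefficients, which here grow with $W$.

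The paper reproves the counting uniformly in $W$:
\begin{enumerate}
\item It compares with the model $x+\widetilde a_i(\widetilde\epsilon zW_0+1)y^r$. This uses the quasipolynomial inverse theorem applied class by class modulo $W_0^{d-r}$, so nilsequence complexities stay independent of $W$.
\item It then uses the Altman--Sawhney residue-distribution and nilsequence comparison lemmas. Their hypotheses require the powered modulus $W_0$ of \eqref{e:AS-saturated-W}, not a primorial.
\item For fixed $z$, splitting $x$ modulo $zW_0+1$ reduces the model to homogeneous configurations $x+s\widetilde a_iy^r$ on intervals of length $\asymp X^{1/2}/W_0$. There Kelley--Meka, Green--Tao/Leng--Sah--Sawhney and Prendiville give the lower bound $\mathfrak m(\delta)$.
\end{enumerate}
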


For non-monomial $P$, the shape of the bounds obtained matches the current best known bounds with integer shifts~\cite{AS25}. For monomial $P$ of degree at least $2$, part~(3) involves one additional iterated logarithm compared with the best known bound~\cite{prendiville}.

\subsection{Previous work} 

\subsubsection{Shifted prime differences}

In the linear case $P_i(y) = iy$, some quantitative density bounds have been obtained in the literature for the existence of shifted prime difference patterns. For $k=2$, this problem is known as S\'ark\"ozy's theorem and has a long history going back to~\cite{Sarkozy}. We summarize the previously best known results (excluding trivial cases such as $k=1$), due to Green~\cite{green-sarkozy}, Leng~\cite{leng-quadratic} and the last two authors~\cite{TT-JEMS}, as follows. Let $N$ be sufficiently large, let $k\in \mathbb{N}$, and let $A\subseteq [N]$ be a set lacking progressions of the form $x,x+p-1,\ldots, x+(k-1)(p-1)$ with $x\in \mathbb{N}$ and $p\in \mathbb{P}$.

Green~\cite[Theorem~1.1]{green-sarkozy} proved that, if $k=2$, then
\begin{align*}
    |A|\ll N^{1-c_2}.
\end{align*}
Leng~\cite[Theorem~6]{leng-quadratic} proved that, if $k=3$ and if there are no Siegel zeros\footnote{See \cite[Definition~2.2]{TT-JEMS} for a precise definition of a Siegel zero in this context.}, then
\begin{align*}
    |A|\ll N\exp\bigl(-(\log\log N)^{c_3}\bigr).
\end{align*}
The last two authors~\cite[Theorem~1.8]{TT-JEMS} proved unconditionally that, if $k\geq 3$, then
\begin{align*}
    |A|\ll
    \begin{cases}
        N\exp\bigl(-(\log\log\log N)^{c_3}\bigr),&k=3,\\
        N(\log\log\log N)^{-c_4},&k=4,\\
        N(\log\log\log\log N)^{-c_k},&k\geq 5,
    \end{cases}
\end{align*}
where the constants $c_k>0$ depend only on $k$.

Thus, Corollary~\ref{thm_progression} gives Leng's estimate unconditionally and improves the estimates of the last two authors in every case; when $k\geq 5$, the improvement is by nearly two iterated logarithms.

When it comes to shifted prime configurations with same degree monomials $x,x+a_1y^d,\ldots, x+a_{k-1}y^d$, in the two-point case $k=2$ and $d\geq 2$, Doyle and Rice~\cite[Theorem~1.5]{rice} give the estimate
\begin{align*}
    |A|\ll_{a_1,d}N(\log N)^{-c\log\log\log N}
\end{align*}
for some $c=c(a_1,d)>0$. For the linear case $k=2$ and $d=1$, a power saving bound for the size of $A$ follows from Green's theorem~\cite[Theorem~1.1]{green-sarkozy} after restricting $A$ to residue classes modulo $|a_1|$ and dilating (with the order of the two points reversed if $a_1<0$).

\subsubsection{Integer differences}

The corresponding problem with integer shifts amounts to quantitative bounds for the Bergelson--Leibman theorem. In the linear case, which corresponds to Szemer\'edi's theorem, the best result for $k=3$ is due to Raghavan~\cite[Theorem~1.4]{Raghavan-3AP}, improving on work of Bloom and Sisask~\cite{bloom-sisask} and Kelley and Meka~\cite{Kelley-Meka}; for $k=4$, the best result is due to Green and the third author~\cite{GT09}; and for $k\geq 5$ the best result is due to Leng, Sah and Sawhney~\cite{LSS-Szemeredi}. For comparison, these give bounds of the form
\begin{align*}
|A|\ll \begin{cases}N\exp\bigl(-c_3(\log N)^{1/6}(\log\log N)^{-1}\bigr),& k=3\\
 N(\log N)^{-c_4},&k=4\\
 N\exp(-(\log \log N)^{c_k}),&k\geq 5\end{cases}
\end{align*}
for some constants $c_k>0$.

For the Bergelson--Leibman theorem for polynomial progressions with distinct degree polynomials, strong quantitative bounds have been obtained in recent years by Peluse~\cite{Peluse-FMP}, Peluse--Prendiville~\cite{PP20},~\cite{PP22}, and Shao--Wang \cite{ShW25}. The conclusion from these works is the bound
\begin{align*}
 |A|\ll 
 N( \log N)^{-\widetilde{c}}
\end{align*}
for some $\widetilde{c}>0$ depending on $P_1,\ldots, P_{k-1}$. Thus, Theorem~\ref{thm_poly} matches this bound apart from the value of the exponent.

Quantitative bounds are also known for the progressions $x,x+a_1P(y),\ldots, x+a_{k-1}P(y)$ involving multiples of a fixed polynomial. For monomial $P$, the work of Prendiville~\cite{prendiville} gives the bound
\begin{align*}
|A|\ll N(\log \log N)^{-c'}    
\end{align*}
for some $c'>0$ depending on $k, a_1,\ldots, a_{k-1},d$. This was recently extended by Altman and Sawhney~\cite{AS25} to general $P$, with bounds of the same shape as in Theorem~\ref{thm:fixed-polynomial-shifted-prime}(1)--(3).

Finally, we mention that stronger density bounds for $A$ of quasipolynomial shape are known in the case of length two progressions $x,x+P(y)$ where the problem reduces to bounding the size of a set lacking polynomial differences; see~\cite{Green-Sawhney-Sarkozy},~\cite{Adajar}.

When combined, Corollary~\ref{thm_progression} and Theorems~\ref{thm_poly}--\ref{thm:fixed-polynomial-shifted-prime} give quantitative polynomial Szemer\'edi-type theorems with shifted prime differences for polynomials with no constant term in all the cases where a quantitative result with \emph{integer shifts} is currently known. For progressions of length at least three, these bounds are, depending on the case, either one iterated logarithm weaker than the bounds over the integers or of matching shape.

We additionally mention a result of Peluse, Sah and Sawhney~\cite{PSS} obtaining a quantitative bound for sets missing the progression $x,x+y^2-1,x+2(y^2-1)$ with $x,y\in \mathbb{N}$ with $y\neq 1$, as well as a generalization of that result by Kravitz, Kuca and Leng~\cite{KKL} where $y^2-1$ is replaced with any polynomial $P(y)$ having a simple integer root. In this paper, we confine ourselves to progressions involving polynomials with zero constant term, as in the Bergelson--Leibman theorem, and leave open the question whether the results for other polynomials could be extended to progressions with shifted prime difference.

\subsection{An overview of the proofs}\label{ss:overview}

To illustrate the main ideas, we discuss two representative cases. Let
$A\subseteq[N]$ have relative density $|A|/N=\delta$, and consider the
problem of finding in $A$ one of the following shifted prime polynomial
configurations:
$$
    x,\ x+\bigl((p-1)^2+(p-1)\bigr),
    \ x+2\bigl((p-1)^2+(p-1)\bigr)
$$
in the case of multiples of a fixed polynomial, or
$$
    x,\ x+(p-1),\ x+(p-1)^2
$$
in the case of distinct degree polynomials.

\subsubsection{Multiples of a fixed polynomial}
\label{ss:overview-common-polynomial}

For the first configuration, set $P(n)=n^2+n$. The relevant weighted
pattern count is
$$
\mathbb E_{x\leq N}\mathbb E_{n\leq N^{1/2}}\Lambda(n+1)
\1_A(x)\1_A(x+P(n))\1_A(x+2P(n)),
$$
for which the goal is to obtain a quantitative positive lower bound. Choose
$w=\lfloor(\log N)^\theta\rfloor$, where $\theta>0$ is a sufficiently
small fixed constant, and let $W$ be the powered smooth modulus used in
\eqref{e:AS-saturated-W} with $d=2$. In particular,
$$
    8\prod_{p\leq w}p\mid W,\qquad P^+(W)\leq w,
    \qquad \log W\ll w.
$$
After applying the $W$-trick, we restrict $A$ to a dense residue class
modulo $W$ and denote the resulting fiber by
$$
    A_{W,b}\coloneqq\{m\in[X]:Wm+b\in A\},
    \qquad X=N/W.
$$
It has density at least $\delta$. Since
$$
    P_W(y)\coloneqq\frac{P(Wy)}{W}=Wy^2+y,
$$
we are reduced to bounding from below
$$
\mathbb E_{x\leq X}\mathbb E_{y\leq M}
\frac{\phi(W)}{W}\Lambda(Wy+1)
\1_{A_{W,b}}(x)\1_{A_{W,b}}(x+P_W(y))
\1_{A_{W,b}}(x+2P_W(y)),
$$
where $M\asymp N^{1/2}/W$.

A key point is that $P_W$ varies with $W$, so a quantitative counting
result for one fixed polynomial cannot be applied directly. We instead adapt
an argument of Altman and Sawhney~\cite{AS25} uniformly over this family.
It compares the unweighted polynomial average along
\begin{equation}\label{e:overview-AS-original}
    x,\quad x+P_W(y),\quad x+2P_W(y),
\end{equation}
with an unweighted count along the model configurations
\begin{equation}\label{e:overview-AS-model}
    x,\quad x+(tW+1)y,\quad x+2(tW+1)y,
\end{equation}
where $t\asymp X^{1/2}$. To justify the comparison between
\eqref{e:overview-AS-original} and \eqref{e:overview-AS-model}, one
uses the quasipolynomial inverse theorem for the Gowers norms to replace the
input functions successively by nilsequences while retaining any putative
large discrepancy. The nilsequence comparison estimate of Altman and Sawhney
then rules out such a discrepancy. For fixed $t$, splitting $x$ into
residue classes modulo $tW+1$ reduces the model count to an ordinary
three-term progression count on intervals of length comparable to
$X^{1/2}/W$. The quantitative three-term progression count of Kelley and
Meka~\cite{Kelley-Meka} therefore supplies a lower bound uniform in $W$.
We now obtain
\begin{align}\label{e:m(delta)}
\mathbb E_{x\leq X}\mathbb E_{y\leq M}
\prod_{i=0}^{2}\1_{A_{W,b}}(x+iP_W(y))
\ge \mathfrak m(\delta),
\qquad
\mathfrak m(\delta)\coloneqq
\exp\bigl(-C(\log(3/\delta))^C\bigr),
\end{align}
where $C>0$ is a constant.

It remains to transfer this lower bound to the prime-weighted count. We use
the quantitative Gowers uniformity estimates from
\cite{Leng-efficient,MTW} to replace the von Mangoldt function by the Siegel
model introduced in~\cite{TT-JEMS}, which records the bias caused by a
possible exceptional zero. If the exceptional conductor does not divide
$W$, the Weil bound for character sums gives an $O(w^{-\eta})$ bound for
the required Gowers norms of the character term in the Siegel model along
$W\mathbb Z+1$. If it divides $W$, then, writing $\beta$ for the
exceptional zero, the character is constant there and the exceptional zero
factor may instead be replaced, in the required Gowers
norm, by
$\rho=1-(WM+1)^{\beta-1}$ with relative error $O(w^{-\eta})$. In all other
cases set $\rho=1$. Together with the Cram\'er model comparison, these estimates give an error
$O(\rho w^{-\eta})$ for some $\eta>0$. The effective exceptional zero
estimate gives
$$
    \rho\gg W^{-O(1)}=\exp(-O(w)).
$$
The error in replacing $\Lambda$ by the Siegel model is in fact
$O(\exp(-(\log N)^{c_1}))$. Since $w=(\log N)^\theta$, choosing
$\theta<c_1$ makes this $o(\rho w^{-\eta})$.

Choose $0<\gamma<\eta/4$. If
$\delta\gg\exp(-(\log w)^c)$, with $c>0$ sufficiently small, then the
formula~\eqref{e:m(delta)} for $\mathfrak m(\delta)$ gives
$\mathfrak m(\delta)>w^{-\gamma}$. Thus the main term is at least
$\rho w^{-\gamma}$, whereas the total comparison error is
$O(\rho w^{-\eta})$; the choice $\gamma<\eta/4$ leaves ample room
for the conclusion
$$
\mathbb E_{x\leq X}\mathbb E_{y\leq M}
\frac{\phi(W)}{W}\Lambda(Wy+1)
\1_{A_{W,b}}(x)\1_{A_{W,b}}(x+P_W(y))
\1_{A_{W,b}}(x+2P_W(y))
>\rho w^{-2\gamma}>0.
$$
The restriction on $\delta$ therefore comes from the Altman--Sawhney
counting estimate. Since $\log w\asymp\log\log N$, it becomes
$$
    \delta\gg\exp\bigl(-(\log\log N)^c\bigr)
$$
for a sufficiently small $c>0$.

The proof for a general fixed polynomial is similar in spirit, but the
quantitative bounds for three-term linear configurations~\cite{Kelley-Meka},
longer linear configurations~\cite{GT09,LSS-Szemeredi}, and higher degree
monomial configurations~\cite{prendiville} lead to the three alternatives in
Theorem~\ref{thm:fixed-polynomial-shifted-prime}.

\subsubsection{Distinct degree polynomials}

A strong bound in the distinct degree case involves additional difficulties.
The prime weight still has to be
replaced by a model that retains the possible exceptional zero bias. At the
same time, we need a density increment argument to find an increased density
for $A$ in sufficiently long arithmetic progressions, instead of merely
finding many configurations at one scale, since this would lose at least one
logarithm over the bound with integer shifts. In the model case, the density
increment naturally leads to the counting operator 
\begin{align}\label{e:cqcoeffoverview}
    T(w)\coloneqq
    \mathbb E_{x\le N}\mathbb E_{n\le M}
    w(qn+1)f_0(x)f_1(x+n)f_2(x+qn^2),
\end{align}
where $M=(N/q)^{1/2}$ and $q$ is a smooth modulus produced by the
iteration. If $A$ contains no required configuration, then $T(\Lambda)$
with $f_0=f_1=f_2=\1_A$ incurs only a negligible contribution from
higher prime powers, whereas $T(\Lambda)$ with
$f_0=f_1=f_2=\delta\1_{[N]}$ has the expected size. Their difference
is therefore a large prime-weighted discrepancy. The rest of the proof has two main
ingredients.

First, we transfer a discrepancy in the prime-weighted counting operator to an
unweighted discrepancy at the same spatial scale. We use a polynomial
generalized von Neumann estimate, with independent polynomial and spatial
scales, to bound changes in $T(w)$ by a Gowers norm of the weight $w$. We first
use quantitative uniformity of the primes to replace $\Lambda$ by the Siegel
model. We then remove the exceptional zero factor using character sum
estimates when the exceptional conductor does not divide $q$, and by partial
summation, at the cost of a slightly shorter shift scale, when it does.

We next compare the resulting Cram\'er model with the constant model.
For an integer cutoff $\sigma=\delta^{-O(1)}$, we arrange that $q$ is divisible by
every integer up to $\sigma$ and has no prime factor exceeding $\sigma$.
A linear forms estimate for the Cram\'er model then shows that the normalized Cram\'er model on $q\mathbb Z+1$ is close to
$1$ in the required Gowers norm. We therefore obtain at some scale $M_0$
only slightly smaller than $M$ an unweighted discrepancy. Writing
$$
    F_B(x,n)\coloneqq
    \1_B(x)\1_B(x+n)\1_B(x+qn^2),
$$
it has the form
\begin{equation}\label{e:overview-model-discrepancy}
\left|
\mathbb E_{x\le N}\mathbb E_{n\le M_0}
\bigl(F_A(x,n)-\delta^3F_{[N]}(x,n)\bigr)
\right|
\gg \delta^3.
\end{equation}

The second main ingredient turns this discrepancy into a density increment.
The local inverse theorem of Shao and Wang~\cite{ShW25} gives weak regularity
approximations of the copies of $\1_A$ by their conditional expectations
on local factors of common modulus $q^{B}$, where $B=O(1)$. This local inverse
theorem produces auxiliary moduli $q'\leq\sigma$, and the condition that
$\operatorname{lcm}(1,2,\ldots,\sigma)\mid q$ therefore ensures that every such $q'$ divides
$q$. If the conditional expectation of
$\1_A$ on every atom were at most $(1+c)\delta$, a telescoping
estimate would show that the discrepancy in~\eqref{e:overview-model-discrepancy} is $O(c\delta^3)$, which is impossible for
sufficiently small $c>0$. Hence some atom is an arithmetic progression of
length $N^{1/2-o(1)}$ on which the density of $A$ increases by a fixed
factor.

There are only $O(\log\delta^{-1}+1)$ density increment steps. At each step
the new modulus is a bounded power of the old one, so it satisfies the same divisibility
properties as the old modulus. If
$\delta>(\log N)^{-c_*}$ for a sufficiently small
$c_*>0$, the iteration would eventually produce a set of density greater
than $1$. This contradiction proves the bound in Theorem~\ref{thm_poly}.

\bigskip

\subsection{Organization of the paper}
The structure of the paper is as follows. In Section~\ref{s:notation}, we introduce the notation, prime models and preliminary estimates used throughout the paper. In Section~\ref{s:uniform}, we prove the Gowers norm and generalized von Neumann estimates used in the prime model comparisons. In Section~\ref{s:common-polynomial}, we prove the fixed polynomial result and deduce the linear progression result as a special case. Finally, in Section~\ref{s:distinct}, we prove the distinct degree result.

\subsection{Acknowledgments}
BK is supported by an EPSRC New Investigator grant and an ERC Starting Grant.
TT is supported by NSF grant DMS-2347850, the James and Carol Collins Chair, and
the Mathematical Analysis \& Application Research Fund. TT is particularly grateful
to recent donors to that fund. JT is supported by the European Union's Horizon
Europe research and innovation programme under ERC grant agreement no.~101162746.
ChatGPT was used as an auxiliary editorial tool for proofreading and preliminary
literature search assistance.  All mathematical statements, references, arguments,
and final wording were checked and approved by the authors.

\section{Notation and preliminaries}\label{s:notation}

The scale $N$ is assumed to be sufficiently large throughout so that expressions such as $\log\log\log \log N$ are well-defined and positive. 

We use the asymptotic notation $X \ll Y$, $Y \gg X$, or $X = O(Y)$ to denote the assertion $|X| \leq C Y$ for some constant $C$; if we need this constant to depend on a parameter, we will denote this by subscripts (unless explicitly stated otherwise). We also use $X\asymp Y$ to denote $X\ll Y\ll X$. The parameters $k,d\in \mathbb{N}$ will be thought of as fixed throughout the paper. We also fix some sufficiently small constant $c_0>0$; this appears in Definition~\ref{d:Siegel} below. 

We denote by $\1_{S}$ the indicator function of a set $S$, and for a finite nonempty set $A$ and for $f\colon A\to \mathbb{C}$ we use the usual averaging notation
\begin{align*}
\mathbb{E}_{x\in A}f(x)\coloneqq \frac{1}{|A|}\sum_{x\in A}f(x).    
\end{align*}
For real $X\geq 1$, we write
$$
    [X]\coloneqq \{1,\ldots,\lfloor X\rfloor\}.
$$
We also write
$$
    [\pm X]\coloneqq\{-\lfloor X\rfloor,\ldots,\lfloor X\rfloor\}.
$$
Throughout the paper, we adopt the convention that
$$
    \sum_{x\leq X} f(x)\coloneqq \sum_{x\in [X]} f(x),
    \qquad
    \mathbb E_{x\leq X} f(x)\coloneqq \mathbb E_{x\in [X]} f(x).
$$
We sometimes abbreviate $\sum_{x\in \mathbb{Z}}f(x)$ as $\sum_x f(x)$. 

We use the shorthand $e(x)=e^{2\pi i x}$ throughout the paper. For an integer $w\ge2$, set
\begin{align}\label{e:L(w)}
 \mathcal L(w)\coloneqq\operatorname{lcm}(1,2,\ldots,w).
\end{align}
We denote the largest prime factor of the positive integer $n$ by $P^{+}(n)$,
with the convention that $P^{+}(1)=1$.
Throughout, products indexed by $p$ are over primes.

Following Peluse~\cite[Definition~3.1]{Peluse-FMP}, we use the following
coefficient condition to track the polynomials under the rescalings in the
distinct degree density increment argument.

\begin{definition}
    A polynomial $P(y)=a_dy^d+\cdots+a_1y\in\mathbb Z[y]$ with zero constant term has $(C,q)$-coefficients if $|a_i|\leq C|a_d|$ for all $i=1,\ldots,d-1$, and $a_d=a'_dq^{d-1}$
    for some $a'_d\in\mathbb Z$ with $0<|a'_d|\leq C$.
\end{definition}

\bigskip

\subsection{Siegel zeros} For $X\ge2$, set
$$
    w_*(X)\coloneqq
    \left\lceil\exp\bigl((\log X)^{c_0}\bigr)\right\rceil.
$$
Whenever an argument is run at scale $X$, we use
Definition~\ref{d:Siegel} with $N=X$. Thus the exceptional zero, character,
and conductor at that scale are defined with cutoff $w_*(X)$.

\begin{definition}[Siegel zero and Siegel conductor]\label{d:Siegel}
Recall that $0<c_0<1/11$ is a small absolute constant. Here and in what follows, let
\begin{align*}
\widetilde{Q}\coloneqq w_*(N)
=\left\lceil\exp\bigl((\log N)^{c_0}\bigr)\right\rceil.    
\end{align*}
Let $L(s,\chi)$ be the $L$-function associated with the Dirichlet character $\chi$. A real number $\beta\in (0,1)$ is a \emph{Siegel zero} if there exists a primitive real Dirichlet character $\chi_{\Siegel}$ of conductor at most $\widetilde Q$ such that $L(\beta,\chi_{\Siegel})=0$ and
\begin{align}\label{e:siegel2}
     \beta\geq 1 - \frac{1}{1000\log \widetilde{Q} }. 
\end{align}
If such a zero $\beta$ exists, it is unique and simple, and the corresponding primitive character $\chi_{\Siegel}$ (called an \emph{exceptional character}) is unique and non-principal. We denote its conductor by $q_{\Siegel}$ and call it the \emph{Siegel conductor}.   
\end{definition}

The numerical constant in \eqref{e:siegel2}, as well as the uniqueness and
simplicity assertions, follow by combining
\cite[Theorem~5.26]{iwaniec-kowalski} with the explicit Landau and Page
theorems in~\cite[Theorems~1 and~2]{Pintz-Landau-Page}.

\begin{lemma}[Landau--Page estimate on $q\mathbb Z+1$]
\label{l:landau-page-compatible}
Let $A\ge1$. For $N$ sufficiently large in terms of $A$, set
$\widetilde Q=w_*(N)$ and use Definition~\ref{d:Siegel} at scale $N$.
Let $q,T\in\mathbb N$ satisfy
$$
 q\le\widetilde Q,
 \qquad
 N^{1/A}\le qT\le N^A.
$$
Then there is a constant $c_A>0$ such that, uniformly in $q$ and $T$,
\begin{equation}\label{e:landau-page-compatible}
 \mathbb E_{y\le T}\Lambda(qy+1)
 =\frac{q}{\phi(q)}
 \left(
 1-\frac{(qT+1)^{\beta-1}}{\beta}
 \1_{q_{\Siegel}\mid q}
 \right)
 +O_A\bigl(\exp(-c_A\sqrt{\log N})\bigr),
\end{equation}
where the exceptional term is interpreted as zero if the Siegel zero in
Definition~\ref{d:Siegel} does not exist.
\end{lemma}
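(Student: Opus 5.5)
We rewrite the left-hand side of \eqref{e:landau-page-compatible} as a prime sum in the arithmetic progression $1 \pmod q$. Put $X=qT+1$. Then
$$
\mathbb E_{y\le T}\Lambda(qy+1)=\frac1T\bigl(\psi(X;q,1)-\Lambda(1)\bigr)=\frac1T\psi(X;q,1),
$$
and $T\asymp X/q$. We expand with Dirichlet characters:
$$
\psi(X;q,1)=\frac1{\phi(q)}\sum_{\chi\pmod q}\psi(X,\chi).
$$
Each $\chi$ is induced by a primitive $\chi^*$ of conductor dividing $q$, and replacing $\chi$ by $\chi^*$ changes $\psi$ by at most $O(\log q\log X)$, which is negligible. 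Note that $\log X\asymp_A\log N$ because $N^{1/A}\le qT\le N^A$. Also $q\le\widetilde Q=\exp((\log N)^{c_0})$, and this is at most $\exp(c(\log X)^{1/2})$ since $c_0<1/2$. So all moduli lie in the Siegel--Walfisz/Page range.

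Next we apply the explicit formula with zero-free region (Iwaniec--Kowalski Theorem~5.26 together with the Landau--Page theorem quoted after Definition~\ref{d:Siegel}). For $Q'=\widetilde Q$, every primitive character of conductor at most $Q'$ has no zeros in $\sigma>1-c/\log(Q'(|t|+2))$, with the single possible exception of the real zero $\beta$ of $\chi_{\Siegel}$. This gives the following uniformly over $\chi^*$ of conductor at most $\widetilde Q$:
$$
\psi(X,\chi^*)=\1_{\chi^*=1}X-\1_{\chi^*=\chi_{\Siegel}}\frac{X^\beta}{\beta}+O\bigl(X\exp(-c\sqrt{\log X})\bigr).
$$
We take the truncation height $\exp(\sqrt{\log X})$ and use $\log\widetilde Q\le(\log X)^{c_0}\ll\sqrt{\log X}$ to bound the zero sum by $X\exp(-c\,\log X/(\log\widetilde Q+\sqrt{\log X}))$. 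The character $\chi_{\Siegel}$ appears among the $\chi^*$ induced from characters mod $q$ exactly when $q_{\Siegel}\mid q$.

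Summing over the $\phi(q)$ characters then gives
$$
\psi(X;q,1)=\frac{X}{\phi(q)}\Bigl(1-\frac{X^{\beta-1}}{\beta}\1_{q_{\Siegel}\mid q}\Bigr)+O\bigl(X\exp(-c\sqrt{\log X})\bigr).
$$
We divide by $T$. We have $X/T=q+1/T$, and the error from the $1/T$ term is $O(1/(\phi(q)T))$, which is negligible because $T\ge N^{1/A}/\widetilde Q$. The error term becomes $O(q\exp(-c\sqrt{\log X}))$. Since $q\le\exp((\log N)^{c_0})$ and $\sqrt{\log X}\gg_A\sqrt{\log N}$, this is $O_A(\exp(-c_A\sqrt{\log N}))$ after shrinking the constant. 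This yields \eqref{e:landau-page-compatible}.

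The main obstacle is making the explicit formula uniform over all $q\le\widetilde Q$, with the effective constant in the zero-free region controlled only through the Landau--Page exceptional-zero dichotomy at level $\widetilde Q$. In particular, we must check that a second "near-exceptional" zero for a modulus $q\le\widetilde Q$ but outside the $1/(1000\log\widetilde Q)$ window still lies at distance $\gg1/\log\widetilde Q$ from $1$. That distance contributes only $X\exp(-c\log X/\log\widetilde Q)\le X\exp(-c(\log X)^{1-c_0})$, which is acceptable. The factor $q$ lost to normalisation is absorbed because $c_0<1/11<1/2$.
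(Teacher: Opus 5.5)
Your proof is correct. It is essentially the standard proof of the Landau--Page form of the prime number theorem in progressions, but it is organized differently from the paper's.

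The paper quotes \cite[Theorem~5.27]{iwaniec-kowalski} as a black box. There the exceptional zero $\beta_q$ is defined relative to the modulus $q$, via $\beta_q\ge1-c/\log(3q)$. The paper then reconciles $\beta_q$ with the scale-$N$ Siegel zero in three steps:
\begin{enumerate}
\item If $\beta_q$ misses the window \eqref{e:siegel2}, its term is $\ll \frac{q}{\phi(q)}X^{-1/(1000\log\widetilde Q)}$ after division by $T$, and it is discarded.
\item If $\beta_q$ lies in the window, the Landau--Page uniqueness forces $\beta_q=\beta$ and $q_{\Siegel}\mid q$.
\item Conversely, if $q_{\Siegel}\mid q$, the paper needs the constant $1/1000$ in \eqref{e:siegel2} to fall inside the exceptional range of that theorem, so that the exceptional term actually appears.
\end{enumerate}

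You go one level down. You expand in characters, pass to primitive characters, and use the truncated explicit formula at height $\exp(\sqrt{\log X})$. You invoke Landau--Page once at level $\widetilde Q$ for all conductors simultaneously and pull $\beta$ out by hand as a main term. This makes the converse constant-matching step unnecessary, because separating off a genuine zero is legitimate wherever it lies. The cost is that you redo the explicit-formula bookkeeping that the paper outsources: truncation, zero counting, and the passage from imprimitive to primitive characters.

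One sentence needs a small repair. Your claim that the only zero in the Landau--Page region is $\beta$ is literally true only if the Landau--Page constant is at most $1/1000$. Otherwise, fall back on your final paragraph: an exceptional zero outside the window lies at distance at least $1/(1000\log\widetilde Q)$ from $1$. It therefore costs at most $X\exp(-c(\log X)^{1-c_0})$. With that reading, every step closes. This includes the $1/T$ correction from $X/T=q+1/T$ and the factor $q$ lost in normalisation, both of which are absorbed because $c_0<1/2$.
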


\begin{proof}
For $q=1$, the result follows from the prime number theorem. We may therefore
suppose that $q\ge2$.
Set $X=qT+1$ and
$$
 \psi(X;q,1)\coloneqq
 \sum_{\substack{n\le X\\ n\equiv1\pmod q}}\Lambda(n).
$$
Since $\Lambda(1)=0$, we have
$$
 \sum_{y\le T}\Lambda(qy+1)=\psi(X;q,1).
$$
The Landau--Page theorem in the form of
\cite[Theorem~5.27]{iwaniec-kowalski} gives
$$
 \psi(X;q,1)
 =\frac{1}{\phi(q)}
 \left(
 X-\1_{\mathrm{exc}(q)}\frac{X^{\beta_q}}{\beta_q}
 \right)
 +O\bigl(X\mathcal E(X,q)\bigr),
$$
where
$$
 \mathcal E(X,q)
 =
 \exp\left(-\frac{c\log X}{\sqrt{\log X}+\log q}\right)
 (\log q)^4,
$$
where $c>0$ is an absolute constant and
$\1_{\mathrm{exc}(q)}=1$ precisely when there is a real Dirichlet
character $\chi$ modulo $q$ for which $L(s,\chi)$ has a real zero $\beta_q$
satisfying
$$
 \beta_q\ge 1-\frac{c}{\log(3q)};
$$
otherwise $\1_{\mathrm{exc}(q)}=0$. As $\chi$ ranges over the real
characters modulo $q$, there is at most one such zero.

Suppose that $\1_{\mathrm{exc}(q)}=1$, and let $\chi_q^*$ be a
primitive real character inducing the corresponding character modulo $q$.
Write $r_q\mid q$ for its conductor. If $\beta_q$ does not satisfy
\eqref{e:siegel2}, then, since $X\ge qT\ge N^{1/A}$,
$$
 X^{\beta_q-1}
 \le
 \exp\left(-\frac{\log X}{1000\log\widetilde Q}\right)
 \le
 \exp\bigl(-c_A(\log N)^{1-c_0}\bigr).
$$
The exceptional zero in \cite[Theorem~5.27]{iwaniec-kowalski} satisfies
$\beta_q^{-1}\ll1$, and
$$
 \frac{q}{\phi(q)}\ll 1+\log\log(3q).
$$
Since $X/T=q+1/T\le2q$, the contribution of this exceptional term after
division by $T$ is
$$
 \frac{X^{\beta_q}}{T\phi(q)\beta_q}
 \ll \frac{q}{\phi(q)}X^{\beta_q-1}
 \ll_A \exp(-c_A\sqrt{\log N}).
$$
We may therefore restrict attention to the case in which $\beta_q$ satisfies
\eqref{e:siegel2}. Since $r_q\le q\le\widetilde Q$, the uniqueness in the
Landau and Page theorems
\cite[Theorems~1 and~2]{Pintz-Landau-Page} gives
$\beta_q=\beta$ and $r_q=q_{\Siegel}$. In particular,
$q_{\Siegel}\mid q$. Conversely, if the Siegel zero in
Definition~\ref{d:Siegel} exists and $q_{\Siegel}\mid q$, then the character
induced by $\chi_{\Siegel}$ modulo $q$ has the zero $\beta$. By the choice of
the numerical constant in \eqref{e:siegel2}, this zero lies in the exceptional
range in \cite[Theorem~5.27]{iwaniec-kowalski}, and hence its exceptional term
occurs in the formula above.

Since $q\le\widetilde Q$ and $c_0<1/11$, we have
$$
 \log q\le(\log N)^{c_0}+O(1)=o(\sqrt{\log X}).
$$
It follows that
$$
 \frac{X}{T}\mathcal E(X,q)
 \ll_A \exp(-c_A\sqrt{\log N}).
$$
Moreover, $T\ge N^{1/(2A)}$ for $N$ sufficiently large in terms of $A$.
Thus replacing $X/T=q+1/T$ by $q$ in the normalized main terms costs
$O_A(\exp(-c_A\sqrt{\log N}))$. Dividing the estimate for $\psi(X;q,1)$ by
$T$ proves \eqref{e:landau-page-compatible}.
\end{proof}

\subsection{Models for the primes}

In order to obtain good quantitative bounds, we will compare the von Mangoldt
function $\Lambda$ with suitable model functions $\Lambda_{\mathrm{Model}}$,
for which $\Lambda-\Lambda_{\mathrm{Model}}$ has small uniformity norms.
There are several such models in the literature; in this paper, we use the
Cram\'{e}r model and the Siegel model, the latter taking into account a possible
Siegel zero.

Given $w\geq 2$, set 
$$W \coloneqq 8\prod_{p\leq w}p.$$
Define the \emph{Cram\'er model} to be 
\begin{align*}
    \Lambda_{\Cramer,w}(n) \coloneqq \frac{W}{\phi(W)}\1_{(n,W)=1},
\end{align*}
where $\phi$ is the Euler totient function and $(n,W)$ is the greatest common divisor of $n$ and $W$.
The factor $8$ changes the modulus but not the Cram\'er model, since
$$
    \frac{W}{\phi(W)}
    =\prod_{p\leq w}\left(1-\frac{1}{p}\right)^{-1},
    \qquad
    (n,W)=1
    \quad\Longleftrightarrow\quad
    \left(n,\prod_{p\leq w}p\right)=1.
$$
It is convenient because later progression moduli must absorb the possible
$2$-adic part of an exceptional conductor.
We recall the \emph{Siegel model} introduced in \cite{TT-JEMS}. In the presence of the Siegel zero $\beta$ (as defined in Definition~\ref{d:Siegel}), set
\begin{align*}
    \Lambda_{\Siegel,w}(n) \coloneqq \Lambda_{\Cramer,w}(n) ( 1 - n^{\beta-1}\chi_{\Siegel}(n)),\qquad  n\in \mathbb{N},
\end{align*}
where $\chi_{\Siegel}$ is the exceptional character. If no Siegel zero is present, we omit the term corresponding to such a zero, and
for $n\in\mathbb N$ set
\begin{align*}
    \Lambda_{\Siegel,w}(n) \coloneqq \Lambda_{\Cramer,w}(n).
\end{align*}

\subsection{Gowers norms}

For any integer $s \geq 1$ and any function $f \colon \mathbb{Z} \to \C$  having finite support, define the (unnormalized) \emph{Gowers uniformity norm}
$$ \| f \|_{U^{s}(\Z)} \coloneqq \left( \sum_{x,h_1,\dots,h_{s} \in \Z} \prod_{\omega \in \{0,1\}^{s}} \mathcal{C}^{|\omega|} f(x+\omega_1 h_1+\dots+\omega_{s} h_{s}) \right)^{1/2^{s}},$$
where $\omega = (\omega_1,\dots,\omega_{s})$, $|\omega| \coloneqq \omega_1+\dots+\omega_{s}$, and $\mathcal{C} \colon z \mapsto \overline{z}$ is the complex conjugation map.
For a finite nonempty interval $I\subseteq\mathbb Z$, define
$$
    \|f\|_{U^s[I]}
    \coloneqq
    \frac{\|f\1_I\|_{U^s(\mathbb Z)}}
         {\|\1_I\|_{U^s(\mathbb Z)}}.
$$
When $I=[a,b]$, we write $\|f\|_{U^s[a,b]}$ for
$\|f\|_{U^s[I]}$.
We use the standard monotonicity estimate
$$
    \|f\|_{U^s[I]}\ll_s\|f\|_{U^{s+1}[I]}
$$
for every finite nonempty interval $I$; this follows from the
Gowers--Cauchy--Schwarz inequality.

We also use the Gowers--Peluse norms of~\cite[Definition~2.4]{AS25}.
For a finitely supported function $f\colon\mathbb Z\to\mathbb C$, set
$$
    \Delta_{(h,h')}f(x)
    \coloneqq f(x+h)\overline{f(x+h')}.
$$
If $\mu_1,\ldots,\mu_s$ are probability measures on $\mathbb Z$, define
$$
    \|f\|_{U^s_{\mathrm{GP}}[N;\mu_1,\ldots,\mu_s]}
    \coloneqq
    \left(
    \frac{1}{N}\sum_{x\in\mathbb Z}
    \mathbb E_{h_i,h'_i\sim\mu_i\ (1\le i\le s)}
    \Delta_{(h_1,h'_1)}\cdots\Delta_{(h_s,h'_s)}f(x)
    \right)^{1/2^s}.
$$
Here
$\mathbb E_{h\sim\mu}f(h)
\coloneqq\sum_{h\in\mathbb Z}f(h)\mu(\{h\})$.
When all the measures are equal to $\mu$, we write
$\|f\|_{U^s_{\mathrm{GP}}[N;\mu]}$. For a finite nonempty multiset $E$,
we write $\mu_E$ for its uniform probability measure, and for an integer
$q$ we write $q\cdot E\coloneqq\{qh:h\in E\}$, retaining multiplicities.

We use the following dilation estimate for normalized Gowers norms.

\begin{lemma}[Dilation estimate for normalized Gowers norms]\label{le:dilate}
Let $d\geq 2$ and $N\geq1$ be integers, and let $b,q\in\mathbb N$ with
$q\le N$. Then, for any function $f\colon\mathbb Z\to\mathbb C$, we have
\begin{equation}\label{e:dilationus}
\|f(q\cdot+b)\|_{U^d[N/q]}
\ll_d q^{\frac{d+1}{2^d}}\|f\|_{U^d[b,N+b]}.
\end{equation}
\end{lemma}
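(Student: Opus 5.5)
The approach is to compare unnormalized Gowers norms by a Fourier expansion of the congruence conditions, and then account for the change in the normalizing factors $\|\1_I\|_{U^d(\mathbb Z)}$. Write $g(x)\coloneqq f(qx+b)$ and $G\coloneqq f\1_J$, where $J$ is the interval $[b,N+b]$. The key identity is that $(g\1_{[N/q]})(x)=G(qx+b)$ whenever the points of $J$ lying in $q\mathbb Z+b$ are exactly $q\cdot[N/q]+b$. With the convention $J=\{b+1,\ldots,b+N\}$ this holds exactly: $qx+b\in J$ iff $1\le x\le N/q$. I would fix this convention first (see the last paragraph).

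\emph{Step 1: unnormalized comparison.} Expanding the definition,
$$
\|g\1_{[N/q]}\|_{U^d(\mathbb Z)}^{2^d}
=\sum_{x,h_1,\dots,h_d}\prod_{\omega}\mathcal C^{|\omega|}G\bigl(qx+b+q\,\omega\cdot h\bigr).
$$
Substituting $y=qx+b$ and $k_i=qh_i$, this is the sum over all $y,k_1,\dots,k_d\in\mathbb Z$ of $\prod_\omega \mathcal C^{|\omega|}G(y+\omega\cdot k)$, weighted by $\1_{q\mathbb Z+b}(y)\prod_i\1_{q\mathbb Z}(k_i)$. I would insert
$$
\1_{q\mathbb Z}(k)=\mathbb E_{r\bmod q}e(rk/q),\qquad
\1_{q\mathbb Z+b}(y)=\mathbb E_{r_0\bmod q}e(r_0(y-b)/q).
$$
For fixed $(r_0,\dots,r_d)$ the total phase is linear in $(y,k)$. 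Writing $e(r_ik_i/q)=e(r_i(y+k_i)/q)\,e(-r_iy/q)$, it splits as a product of phases evaluated at the vertex $y$ and at the vertices $y+k_i$ (those with $|\omega|=1$). The inner sum is therefore a Gowers inner product $\langle G_\omega\rangle_{U^d}$ of functions $G_\omega=G\cdot(\text{linear phase})$, with $|G_\omega|=|G|$. Modulation by linear phases does not change the $U^d$ norm for $d\ge2$. So the Gowers--Cauchy--Schwarz inequality bounds each inner sum by $\|G\|_{U^d(\mathbb Z)}^{2^d}$, and averaging over $r_0,\dots,r_d$ gives
$$
\|g\1_{[N/q]}\|_{U^d(\mathbb Z)}\le\|f\1_J\|_{U^d(\mathbb Z)}.
$$

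\emph{Step 2: normalization.} For an interval $I$ of length $L\ge1$ we have $\|\1_I\|_{U^d(\mathbb Z)}^{2^d}\asymp_d L^{d+1}$, since this counts the $(d+1)$-dimensional parallelepipeds inside $I$. Hence
$$
\frac{\|\1_J\|_{U^d}}{\|\1_{[N/q]}\|_{U^d}}\asymp_d\Bigl(\frac{N^{d+1}}{(N/q)^{d+1}}\Bigr)^{1/2^d}=q^{\frac{d+1}{2^d}},
$$
using $q\le N$ so that $[N/q]$ is nonempty and has length $\asymp N/q$. Dividing the inequality from Step 1 by $\|\1_{[N/q]}\|_{U^d}$ then gives \eqref{e:dilationus}.

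\emph{Main obstacle.} The Fourier step is routine. The only delicate point is making the supports match exactly. Gowers norms are not monotone under multiplication by an interval indicator, and removing stray points by Fourier-expanding a cutoff would cost an unwanted logarithmic factor. So I would arrange the interval conventions so that $J\cap(q\mathbb Z+b)=q\cdot[N/q]+b$, reading $[b,N+b]$ as the $N$ integers $b+1,\dots,b+N$; with this reading no point of $J$ in $q\mathbb Z+b$ lies outside $q\cdot[N/q]+b$, so no cutoff is ever needed. Otherwise I would replace $[N/q]$ by $[0,N/q]$; this changes the normalizer only by a constant factor and the left side by an admissible amount, although the latter still has to be checked.
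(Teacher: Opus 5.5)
Your core mechanism is the same as the paper's. Restricting the cube sum to $q\mathbb Z+b$ forces $y\in q\mathbb Z+b$ and $k_i\in q\mathbb Z$. The congruence condition is then Fourier expanded, linear phases are harmless for $d\ge2$, and the normalizers are compared via $\|\1_I\|_{U^d(\mathbb Z)}^{2^d}\asymp_d|I|^{d+1}$. The paper expands only $\1_{q\mathbb Z+b}$ and applies the triangle inequality to $f\1_{q\mathbb Z+b}\1_{[b,N+b]}$, which is slightly shorter than your Gowers--Cauchy--Schwarz bookkeeping but equivalent.

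\textbf{The gap.} The gap is the endpoint issue you flagged but did not settle. In the paper, $[b,N+b]$ is the closed integer interval $\{b,\ldots,N+b\}$. You can see this from the identity $\1_{[b,QM+b]}=\1_{[QM+b]}-\1_{[b-1]}$ in the proof of Proposition~\ref{prop:wbound}, and from the application with $b=1$ yielding $U^s[qM+1]$ in Proposition~\ref{lem:chaining}. Reading it as $\{b+1,\ldots,b+N\}$ therefore proves a different inequality. Under the correct reading, your Step~1 identity fails at $x=0$, because $(q\mathbb Z+b)\cap[b,N+b]$ also contains the point $b$. Your fallback is left unchecked. The worry behind it is misplaced: Gowers norms are not monotone under interval cutoffs in general, but deleting a single point costs only a constant.

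\textbf{The missing fact.} For every finitely supported $F$ and every $n$, one has $|F(n)|\le\|F\|_{U^d(\mathbb Z)}$. To see this, write $\Delta_hF(n)=F(n)\overline{F(n+h)}$. Keep only the $h=0$ term in $\|F\|_{U^d(\mathbb Z)}^{2^d}=\sum_h\|\Delta_hF\|_{U^{d-1}(\mathbb Z)}^{2^{d-1}}$, all of whose terms are nonnegative, and iterate. This gives
\[
\|F\|_{U^d(\mathbb Z)}^{2^d}\ge\Bigl(\sum_n|F(n)|^{2^{d-1}}\Bigr)^2\ge\sum_n|F(n)|^{2^d}.
\]
Since $\|\1_{\{n\}}\|_{U^d(\mathbb Z)}=1$, the triangle inequality removes one point at the cost of a factor $2$.

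\textbf{The fix in your setup.} Put $m=\lfloor N/q\rfloor$. Then $qx+b\in[b,N+b]$ if and only if $0\le x\le m$. Your Step~1, run with $[0,m]$ in place of $[m]$, gives exactly $\|g\1_{[0,m]}\|_{U^d(\mathbb Z)}\le\|f\1_{[b,N+b]}\|_{U^d(\mathbb Z)}$. Then
\[
\|g\1_{[m]}\|_{U^d(\mathbb Z)}\le\|g\1_{[0,m]}\|_{U^d(\mathbb Z)}+|g(0)|\le2\|g\1_{[0,m]}\|_{U^d(\mathbb Z)},
\]
and your Step~2 finishes the proof. The paper does the same thing on the $f$ side: it splits off the point $b$ and bounds $|f(b)|\le\|f\1_{[b,N+b]}\|_{U^d(\mathbb Z)}$.
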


\begin{proof}
Set $m\coloneqq\lfloor N/q\rfloor$ and
$J\coloneqq[q+b,qm+b]$. By the definition of the normalized Gowers norm
and a change of variables,
$$
\|f(q\cdot+b)\|_{U^d[m]}
=
\frac{\|f\1_{q\mathbb Z+b}\1_J\|_{U^d(\mathbb Z)}}
     {\|\1_{[m]}\|_{U^d(\mathbb Z)}}.
$$
The progression $(q\mathbb Z+b)\cap[b,N+b]$ is the disjoint union of
$J\cap(q\mathbb Z+b)$ and the single point $\{b\}$. Hence, by the
triangle inequality,
$$
\|f\1_{q\mathbb Z+b}\1_J\|_{U^d(\mathbb Z)}
\le
\|f\1_{q\mathbb Z+b}\1_{[b,N+b]}\|_{U^d(\mathbb Z)}
+|f(b)|.
$$
By the orthogonality of characters, we have
\begin{align*}
\1_{q\mathbb{Z}+b}(n)=\frac{1}{q}\sum_{0\leq r<q}e\left(\frac{r(n-b)}{q}\right).    
\end{align*}
The triangle inequality and invariance under multiplication by a linear
phase therefore give
$$
\|f\1_{q\mathbb Z+b}\1_{[b,N+b]}\|_{U^d(\mathbb Z)}
\le
\|f\1_{[b,N+b]}\|_{U^d(\mathbb Z)}.
$$
Moreover, writing
$\Delta_hF(n)\coloneqq F(n)\overline{F(n+h)}$, the recursive identity
$$
    \|F\|_{U^d(\mathbb Z)}^{2^d}
    =
    \sum_{h\in\mathbb Z}
    \|\Delta_hF\|_{U^{d-1}(\mathbb Z)}^{2^{d-1}}
$$
holds for every finitely supported function $F$. Retaining the nonnegative
term with $h=0$, we obtain
$$
    \|F\|_{U^d(\mathbb Z)}^{2^d}
    \ge
    \bigl\||F|^2\bigr\|_{U^{d-1}(\mathbb Z)}^{2^{d-1}}.
$$
Iterating the same argument gives
$$
    \|F\|_{U^d(\mathbb Z)}^{2^d}
    \ge
    \left(\sum_n|F(n)|^{2^{d-1}}\right)^2
    \ge
    \sum_n|F(n)|^{2^d}.
$$
Applying this with $F=f\1_{[b,N+b]}$ gives
$$
    |f(b)|
    \le
    \|f\1_{[b,N+b]}\|_{U^d(\mathbb Z)}.
$$
Finally, the interval cube count and $N+1\le2qm$ give
$$
\frac{\|\1_{[b,N+b]}\|_{U^d(\mathbb Z)}}
     {\|\1_{[m]}\|_{U^d(\mathbb Z)}}
\ll_d q^{(d+1)/2^d}.
$$
Combining these estimates proves \eqref{e:dilationus}.
\end{proof}

\section{Uniformity and inverse estimates for polynomial configurations}\label{s:uniform}

In this section we record the Gowers norm and inverse inputs used in the
prime-model comparisons of Section~\ref{s:common-polynomial} and in the
distinct degree argument of Section~\ref{s:distinct}. We first record the Gowers norm approximation of the
primes by the Siegel model and prove the uniformity of the Cram\'er model on
smooth progressions. We then introduce the counting forms and local factors,
prove a generalized von Neumann estimate with independent shift and spatial
scales, and state the Shao--Wang inverse input used in weak regularity.

\begin{lemma}[Gowers norm approximation of primes by the Siegel model]
\label{l:prime-siegel-gowers}
For every $s\in\mathbb N$, there exists $c_s>0$ such that the following holds.
Let $X,Y\ge3$, and use Definition~\ref{d:Siegel} at scale $X$, so that
$\widetilde Q=w_*(X)$. Suppose that
\begin{align*}
\exp\bigl((\log\log Y)^{1/c_s}\bigr)
\le \widetilde Q\le
\exp\bigl((\log Y)^{1/10}\bigr).
\end{align*}
Then
\begin{align}\label{e:prime-siegel-gowers}
\|\Lambda-\Lambda_{\Siegel,\widetilde Q}\|_{U^s[Y]}
\ll_s\exp\bigl(-(\log \widetilde Q)^{c_s}\bigr).
\end{align}
\end{lemma}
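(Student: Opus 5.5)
This is essentially a repackaging of the quantitative Gowers uniformity of the primes relative to the Siegel model, due to Leng~\cite{Leng-efficient} and Matomäki, Shao, Tao and Teräväinen~\cite{MTW}, building on~\cite{TT-JEMS}. The plan is to deduce it from that estimate, tracking how the cutoff $\widetilde Q$ enters. The cited results give, for $Q$ in a range of the form $\exp((\log\log Y)^{C})\le Q\le \exp((\log Y)^{1/10})$, the bound
$$
\|\Lambda-\Lambda_{\Siegel,Q}\|_{U^s[Y]}\ll_s \exp\bigl(-(\log Q)^{c}\bigr),
$$
where the Siegel model is defined with the exceptional zero at level $Q$ (the zero with conductor at most $Q$ and $\beta\ge 1-c/\log Q$). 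The work is to reconcile three differences with the statement.

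\emph{First}, our cutoff $\widetilde Q=w_*(X)$ is attached to a possibly different scale $X$. Since the hypothesis is phrased entirely in terms of $\widetilde Q$ and $Y$, I would simply apply the cited theorem with $Q=\widetilde Q$ at spatial scale $Y$. The range condition is exactly the stated hypothesis, after shrinking $c_s$.

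\emph{Second}, the normalization of the exceptional zero must match. Definition~\ref{d:Siegel} uses the threshold $1-1/(1000\log\widetilde Q)$. If the literature uses a threshold $1-c/\log Q$ with a different constant, there are two cases.
\begin{itemize}
\item If a zero is exceptional in one sense but not the other, then $\beta\le 1-c'/\log\widetilde Q$. The term $\Lambda_{\Cramer}(n)n^{\beta-1}\chi_{\Siegel}(n)$ then has size $\ll \frac{W}{\phi(W)}\exp(-c'\log n/\log\widetilde Q)$ for most $n\le Y$ (discard $n\le Y^{1/2}$ trivially). Since $\log Y\ge(\log\widetilde Q)^{10}$, this is $\ll\exp(-(\log\widetilde Q)^{c})$ in $L^\infty$, hence in $U^s[Y]$, because $\|g\|_{U^s[Y]}\ll\|g\|_{L^\infty}$.
\item Alternatively, one can apply the theorem with $Q$ replaced by a suitable power $\widetilde Q^{C}$, which changes the thresholds only by constant factors.
\end{itemize}

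\emph{Third}, the Cramér cutoff must match. Our model uses $W=8\prod_{p\le\widetilde Q}p$, and the factor $8$ does not change $\Lambda_{\Cramer}$, as already noted. If the cited model uses the cutoff $w=Q$ with the same primes, we are done. If it uses a different cutoff $w'=\widetilde Q^{O(1)}$, I would bound $\|\Lambda_{\Cramer,w}-\Lambda_{\Cramer,w'}\|_{U^s[Y]}$ by a linear forms (Fundamental Lemma of sieve) computation. Expanding the $2^s$-th power gives sieve weights over primes in $(w,w']$; the local factors give $\prod_{w<p\le w'}(1+O(p^{-2}))-1\ll 1/w$. The same applies with the factor $n^{\beta-1}\chi_{\Siegel}(n)$ inserted.

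I expect this last comparison, or the careful matching of normalizations with the precise form of the cited theorem, to be the main, though routine, obstacle. Converting unnormalized to normalized Gowers norms on $[Y]$ is immediate.
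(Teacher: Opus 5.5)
Your proposal is correct and takes the same route as the paper: the paper's proof is simply a citation of \cite[Proposition~4.6]{MTW} (itself based on \cite{Leng-efficient}), applied at spatial scale $Y$ with cutoff $\widetilde Q$, which is your first step. Your further checks on the exceptional-zero threshold and the Cram\'er cutoff are harmless but do not appear in the paper. The paper treats its Siegel model as coinciding with the cited one and does not carry out these normalization checks.
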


\begin{proof}
This is \cite[Proposition~4.6]{MTW}, applied at scale $Y$ with cutoff
$\widetilde Q$; that result is based on \cite{Leng-efficient}.
\end{proof}

\subsection{Gowers uniformity of the Cram\'er model}

The following consequence of the linear forms estimate in
\cite[Section~5]{TT-JEMS} is used in Lemma~\ref{l:powered-cramer} and
Proposition~\ref{lem:chaining}. Its formulation allows the progression modulus
to contain powers of the small primes.

\begin{proposition}[Gowers uniformity of Cram\'er model on a smooth progression]
\label{p:saturated-cramer-gowers}
Let $s\ge1$ be fixed, and let $M\ge3$. Let
$w,z,Q\in\mathbb N$, $b\in\mathbb Z$, and let $J\subseteq[M]$ be an
interval. Suppose that
$$
    2\le w\le z,
    \qquad
    \prod_{p\le w}p\mid Q,
    \qquad
    P^+(Q)\le w,
    \qquad
    (b,Q)=1,
$$
and that
$$
    Q\le\exp\bigl((\log M)^{2/5}\bigr),
    \qquad
    z+1\le\exp\bigl((\log M)^{1/10}\bigr).
$$
Then
\begin{align*}
\left\|
\1_J
\left(
\frac{\phi(Q)}{Q}\Lambda_{\Cramer,z}(Q\cdot+b)-1
\right)
\right\|_{U^s[M]}
\ll_s w^{-1/2^s}.
\end{align*}
\end{proposition}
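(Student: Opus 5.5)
We will prove the bound by expanding the $2^s$-th power of the Gowers norm and evaluating each of the resulting correlations of the Cram\'er model with a linear forms (local density) computation, as in \cite[Section~5]{TT-JEMS}.

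\textbf{Setup.} Write
$$
g(n)\coloneqq \frac{\phi(Q)}{Q}\Lambda_{\Cramer,z}(Qn+b).
$$
Since $(b,Q)=1$ and every prime $p\le w$ divides $Q$, the condition $(Qn+b,p)=1$ holds automatically for $p\le w$. Hence
$$
g(n)=\prod_{w<p\le z}\Bigl(1-\frac1p\Bigr)^{-1}\1_{(Qn+b,p)=1}.
$$
This is the Cram\'er model with the primes up to $w$ sieved away. Note also that $Q$ is coprime to every prime in $(w,z]$, because $P^+(Q)\le w$.

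\textbf{Expansion.} We expand
$$
\|\1_J(g-1)\|_{U^s(\mathbb Z)}^{2^s}
$$
over subsets $S\subseteq\{0,1\}^s$. This gives a signed sum of the averages
$$
\sum_{x,h}\ \prod_{\omega}\1_J(x+\omega\cdot h)\prod_{\omega\in S}g(x+\omega\cdot h).
$$
It therefore suffices to show that each such average equals
$$
\sum_{x,h}\prod_\omega\1_J(x+\omega\cdot h)\;\bigl(1+O(w^{-1})\bigr),
$$
up to a negligible additive error. The main terms then cancel in the alternating sum, leaving
$$
O\bigl(w^{-1}\bigr)\,\|\1_{[M]}\|_{U^s}^{2^s}.
$$
Taking $2^s$-th roots gives the claimed $w^{-1/2^s}$; the exponent is wasteful but enough.

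\textbf{Linear forms computation.} Each average is a count of $\1_{(\psi_\omega(x,h),p)=1}$ over affine forms
$$
\psi_\omega(x,h)=Q(x+\omega\cdot h)+b,
$$
with $x,h$ ranging over a convex body of size $\asymp M^{s+1}$. The forms are pairwise linearly independent after removing the constant terms, and $Q$ is invertible mod $p$ for $w<p\le z$. We would invoke the linear forms estimate of \cite[Section~5]{TT-JEMS}, or rerun it via a fundamental-lemma / Chinese remainder computation. This gives the local factor
$$
\prod_{w<p\le z}\beta_p,
\qquad
\beta_p=\mathbb E_{x,h\in\mathbb F_p}\prod_{\omega\in S}\Bigl(1-\frac1p\Bigr)^{-1}\1_{\psi_\omega\not\equiv0}.
$$
For $p>w$ large compared to $2^s$, the forms are distinct mod $p$. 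Then $\beta_p=1+O_s(p^{-2})$, because the pairwise-independence means that no $O(1/p)$ term survives. Hence
$$
\prod_{p>w}\beta_p=1+O_s(1/w).
$$

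\textbf{Main obstacle: the sieve error term.} The error from truncating the sieve in the linear forms estimate is the delicate part. The modulus $\prod_{w<p\le z}p$ can be as large as $e^{O(z)}$, which is far larger than $M$. We would therefore not expand the product directly. Instead we would use the TT-JEMS estimate, which handles this via a smooth truncation (or a fundamental lemma of sieve theory with level $M^{o(1)}$) and gives error $O(e^{-c\log M/\log z})$ for the sifted part. The hypothesis $z+1\le \exp((\log M)^{1/10})$ makes this error negligible.

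The condition $Q\le\exp((\log M)^{2/5})$ serves two purposes. It ensures the forms' coefficient $Q$ does not spoil the error terms, since the forms' height enters only polynomially. It also controls the boundary effects from the cutoffs $\1_J$, since the linear map $x\mapsto Qx+b$ only rescales. We would check that these boundary contributions are $O(M^{s}\cdot\text{poly}(\cdot))$ and hence lower order compared to $M^{s+1}$.
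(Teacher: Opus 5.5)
Your proposal is correct and follows essentially the same route as the paper. You expand the $2^s$-th power over subsets $\mathcal S\subseteq\{0,1\}^s$ and apply \cite[Proposition~5.2]{TT-JEMS} to the pairwise independent forms $Q(x+\omega\cdot\mathbf h)+b$. The local factors at $p\le w$ are absorbed by the normalization $\phi(Q)/Q$ (you build this in by rewriting the weight as a sieve over $w<p\le z$), while those at $p>w$ are $1+O_s(p^{-2})$; the common volume terms then cancel, leaving $O_s(w^{-1})$ before taking $2^s$-th roots. Your restriction to $p$ large compared with $2^s$ is unnecessary, since the linear parts $Q(1,\omega)$ are pairwise independent modulo every prime not dividing $Q$.
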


\begin{proof}
When $M$ is bounded in terms of $s$, the hypotheses also bound
$w,z,Q$, so the result follows after enlarging the implied constant. We may
therefore suppose that $M$ is sufficiently large.
Set
$$
    \nu(n)\coloneqq
    \frac{\phi(Q)}{Q}\Lambda_{\Cramer,z}(Qn+b)
$$
and, if $J=\{j_1,j_1+1,\ldots, j_2\}$, write
$$
\Omega_J\coloneqq
\left\{(x,\mathbf h)\in\mathbb R^{s+1}:
x+\omega\cdot\mathbf h\in [j_1,j_2]
\text{ for every }\omega\in\{0,1\}^s\right\}.
$$
For each nonempty $\mathcal S\subseteq\{0,1\}^s$, apply
\cite[Proposition~5.2]{TT-JEMS}, at scale $M$, to the affine forms
\begin{equation}\label{e:cramer-cube-forms}
    Q(x+\omega\cdot\mathbf h)+b,
    \qquad \omega\in\mathcal S.
\end{equation}
Their linear coefficient vectors $Q(1,\omega)$ are pairwise linearly
independent and have magnitude $O_s(Q)$, so the coefficient hypothesis in
the cited proposition follows from the bound on $Q$. Our convention uses the
primes $p\le z$, whereas the cited proposition uses $p<z$; we apply it
there with cutoff $z+1$.

The hypotheses on $Q$ imply
$$
    \frac{\phi(Q)}{Q}=\prod_{p\le w}\left(1-\frac{1}{p}\right).
$$
Let $\beta_{p,\mathcal S}$ denote the local factor associated with the forms
in \eqref{e:cramer-cube-forms} in
\cite[Proposition~5.2, in particular (5.3) and (5.5)]{TT-JEMS}.
For $p\le w$, every affine form in \eqref{e:cramer-cube-forms} is congruent to
$b\not\equiv0\pmod p$, so the local factor
$\beta_{p,\mathcal S}=(p/(p-1))^{|\mathcal S|}$ is cancelled by the corresponding factor from
$(\phi(Q)/Q)^{|\mathcal S|}$. For $p>w$, multiplication by $Q$ is
invertible modulo $p$, and the usual local calculation gives
$$
    \beta_{p,\mathcal S}=1+O_s(p^{-2}).
$$
Consequently,
$$
\left(\frac{\phi(Q)}{Q}\right)^{|\mathcal S|}
\prod_{p\le z}\beta_{p,\mathcal S}
=1+O_s(w^{-1}).
$$
It follows that, uniformly for nonempty $\mathcal S$,
\begin{align}
&\sum_{(x,\mathbf h)\in\Omega_J\cap\mathbb Z^{s+1}}
\prod_{\omega\in\mathcal S}\nu(x+\omega\cdot\mathbf h)\notag\\
&\qquad=
\operatorname{vol}(\Omega_J)
+O_s\left(
M^{s+1}w^{-1}
+M^{s+1}\exp\bigl(-c_s(\log M)^{4/5}\bigr)
\right)
\label{e:cramer-cube-correlation}
\end{align}
for some $c_s>0$. For $\mathcal S=\varnothing$, the same formula
holds with error $O_s(M^s)$, by the standard lattice point estimate.
Expanding the $2^s$-th power of the Gowers norm of
$\1_J(\nu-1)$, the common volume terms cancel. Noting that
$$
    \|\1_{[M]}\|_{U^s(\mathbb Z)}^{2^s}\asymp_s M^{s+1},
$$
and using \eqref{e:cramer-cube-correlation}, we obtain
$$
\left\|
\1_J(\nu-1)
\right\|_{U^s[M]}^{2^s}
\ll_s
w^{-1}+\exp\bigl(-c_s(\log M)^{4/5}\bigr)+M^{-1}.
$$
The two final terms are $O_s(w^{-1})$ in the stated range, which proves
the result.
\end{proof}

\subsection{Multilinear forms}

In this subsection, we define the multilinear form used throughout the
paper. Let $k\ge2$ and $N,M,Q_0\in\mathbb N$, and let
$P_1,\ldots,P_{k-1}\in\mathbb Z[y]$ satisfy $P_i(0)=0$. Let
$f_0,\ldots,f_{k-1},w\colon\mathbb Z\to\mathbb C$. Whenever the sum below
is absolutely convergent, define
\begin{align}\label{e:Q0avgmulti}
T_{N,M,w,Q_0}(f_0,\ldots,f_{k-1})
\coloneqq
\frac{1}{N}\sum_{x\in\mathbb Z}\mathbb E_{n\le M}
w(Q_0n+1)f_0(x)
\prod_{i=1}^{k-1}f_i(x+P_i(n)).
\end{align}
For notational convenience, we omit the explicit dependence on the
polynomials $P_i$, except in the proof of Theorem~\ref{thm_poly}, where
this dependence is reinstated.
\subsection{Local factors}\label{ss:localfactor}

A \emph{factor} $\mathcal{B}$ of $[N]$ is a partition of $[N]$ into disjoint
subsets $\{B_i\}_i$ with $[N]=\bigcup_i B_i$.

For a factor $\mathcal{B}$, define the projection $\Pi_{\mathcal{B}}$ of a
function $f\colon [N]\to\mathbb C$ by
$$
    \Pi_{\mathcal B}f(x)\coloneqq \mathbb E_{y\in B_x}f(y),
$$
where $B_x$ is the unique atom of $\mathcal B$ containing $x$.  Thus
$\Pi_{\mathcal B}f$ is the conditional expectation of $f$ with respect to the
$\sigma$-algebra generated by the atoms of $\mathcal B$.
In particular, $\Pi_{\mathcal B}f$ has the same total sum as $f$, and
$f-\Pi_{\mathcal B}f$ has mean zero on every atom of $\mathcal B$.

A \emph{local factor} of resolution $M$ and modulus $q$ is a factor
$\mathcal B$ of $[N]$ such that every atom of $\mathcal B$ is an arithmetic
progression of common difference $q$ and length in $[M,2M]$.  Equivalently,
each atom is of the form
$$
    (q\mathbb Z+a)\cap I
$$
for some residue class $a\pmod q$ and some interval $I$, with
$$
    M\leq |(q\mathbb Z+a)\cap I|\leq 2M.
$$
This is the notion of local factor used in \cite[Definition~3.4]{ShW25}.

\subsection{A weighted generalized von Neumann estimate}\label{s:GVNT1}

We need a variant of the polynomial generalized von Neumann estimate of
\cite[Theorem~4.2]{Mobius-ergodic} in which the polynomial variable has length
$M$, independently of the spatial support scale $N$. This is used later
when the shift scale shortens while the other functions remain supported at
scale $N$. The PET induction argument is the same as in the proof of that
theorem. The differences are that we keep the spatial scale fixed when the
degree decreases and continue the induction until the polynomial system is
constant. We record these changes below.

\begin{proposition}[Weighted polynomial generalized von Neumann estimate with independent scales]
\label{l:fixed-scale-gvn}
Let $d,k,M,N\in\mathbb N$ and $C\ge1$. Let
$P_1,\ldots,P_k\in\mathbb Z[y]$ have degrees at most $d$, and let
$g\colon[M]\to\mathbb C$ be arbitrary. For each $0\le i\le k$, let
$f_i\colon\mathbb Z\to\mathbb C$ be a $1$-bounded function supported on a
set of cardinality at most $CN$. Then there is an integer
$2\le s\ll_{d,k}1$ such that
\begin{align*}
    \left|
    \frac{1}{N}\sum_{x\in\mathbb Z}\E_{y\le M}
    g(y)f_0(x)\prod_{i=1}^kf_i(x+P_i(y))
    \right|
    \ll_{C,d,k}\|g\|_{U^s[M]}.
\end{align*}
\end{proposition}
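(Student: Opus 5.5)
The plan is to run the PET induction of \cite[Theorem~4.2]{Mobius-ergodic} with the weight $g$ in the variable $y$, treating $x$ as the variable to be eliminated. First dispose of trivial cases. If $M$ is bounded, or if the system is degenerate (some $P_i$ constant, i.e.\ $P_i=0$ given zero constant terms are not assumed here), absorb the corresponding $f_i$ into $f_0$; if the system reduces to $k=0$, the form is $\frac{1}{N}\sum_x f_0(x)\,\mathbb E_{y\le M} g(y)$, which is bounded by $C\,|\mathbb E_{y\le M}g(y)|\ll \|g\|_{U^1[M]}\ll\|g\|_{U^s[M]}$ by the monotonicity estimate. Otherwise, Cauchy--Schwarz in $x$ removes $f_0$ at cost $C^{1/2}$, giving
\[
\Bigl(\frac1N\sum_x \mathbb E_{y,y'\le M} g(y)\overline{g(y')}\prod_i f_i(x+P_i(y))\overline{f_i(x+P_i(y'))}\Bigr)^{1/2}.
\]

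Next, following the standard PET step, write $y'=y+h$ with $h\in[\pm M]$ and shift $x$ by $-P_{k}(y)$ for a polynomial of minimal weight, obtaining, for each fixed $h$, a new polynomial system in $y$ (the differenced polynomials $P_i(y+h)-P_k(y)$, $P_i(y)-P_k(y)$) with strictly smaller PET weight, and new weight $\Delta_h g(y)=g(y)\overline{g(y+h)}\1_{[M]}(y+h)$. The functions remain $1$-bounded and supported on sets of size $\le CN$ (translates of the originals), and crucially the spatial normalization $\frac1N$ is kept unchanged; the $y$-range stays $[M]$ (with the cutoff folded into the weight). Apply the induction hypothesis for each $h$ and average over $h$, using H\"older to obtain $\mathbb E_{|h|\le M}\|\Delta_h g\,\1_{[M]}(\cdot+h)\|_{U^{s'}[M]}^{2^{s'}}$, which by the recursive identity for Gowers norms (as in the proof of Lemma~\ref{le:dilate}) is $\ll \|g\|_{U^{s'+1}[M]}^{2^{s'+1}}$. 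The induction must also allow polynomials depending on $h$; since the bounds depend only on $d,k,C$ and not on coefficients, this is harmless.

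The induction is on the PET weight vector, terminating when the system is constant in $y$ (all $P_i$ become constants depending on $h$), at which point the base case above applies with a single-variable average of the weight. The number of steps is $O_{d,k}(1)$, giving $s\ll_{d,k}1$, and we can always enlarge $s$ to be at least $2$ by monotonicity. The main obstacle I expect is bookkeeping in the inductive step: when differencing produces polynomials that coincide or become constant for particular $h$ (a set of $h$ of size $O_{d,k}(1)$ when not identically so), those $h$ must be handled separately, contributing $O(1/M)$, which is acceptable since $\|g\|_{U^s[M]}\gg M^{-O(1)}\cdot$ is not automatic for arbitrary $g$; instead I would merge the coinciding functions (products of $1$-bounded functions with the same support bound) and run the induction with the reduced system, so that no exceptional $h$ needs to be discarded.
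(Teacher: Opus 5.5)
Your proposal is correct and follows essentially the same route as the paper: Cauchy--Schwarz in $x$ with the spatial normalization $1/N$ kept fixed, PET descent carried all the way down to constant polynomial systems (with $h$-dependent degeneracies handled by merging, which only lowers the PET semi-invariant), the $U^1$ base case upgraded to $U^2$ by monotonicity, and H\"older together with the identity $\sum_h\|\Delta_hg\|_{U^r(\mathbb Z)}^{2^r}=\|g\|_{U^{r+1}(\mathbb Z)}^{2^{r+1}}$ to close each step. The one point the paper proves that you only assert is that the descent has depth $O_{d,k}(1)$ uniformly in $h$; the paper shows this by induction on the number of components of the semi-invariant, using that each step at most doubles the number of polynomials, and then takes a common order $r$ for all $h$ by monotonicity.
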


\begin{proof}
We follow the PET induction in
\cite[Proof of Theorem~4.2]{Mobius-ergodic}
(see also~\cite{bergelson}).
Constant polynomials may be
absorbed into $f_0$. If two polynomials differ by a
constant, we combine their factors. These operations preserve
$1$-boundedness and do not increase the cardinality of the supports. We may
therefore suppose that the remaining polynomials are nonconstant and that no
two differ by a constant.

For such a family $\mathcal P=\{P_1,\ldots,P_\ell\}$, write
$$
    \Lambda_{\mathcal P}(g)
    \coloneqq
    \frac{1}{N}\sum_{x\in\mathbb Z}\E_{y\le M}
    g(y)f_0(x)\prod_{i=1}^{\ell}f_i(x+P_i(y)).
$$
Choose $P_1$ to have the least positive degree. Applying
Cauchy--Schwarz in $x$, expanding the square, writing the second polynomial
variable as $y+h$, and then relabeling $x+P_1(y)$ as $x$ gives
\begin{equation}\label{e:fixed-scale-pet-step}
    |\Lambda_{\mathcal P}(g)|^2
    \ll_C \frac{1}{M}\sum_{|h|<M}|\mathcal A_h|,
\end{equation}
where, on extending $g$ by zero outside $[M]$ and writing
$\Delta_hg(y)\coloneqq g(y)\overline{g(y+h)}$,
\begin{align*}
    \mathcal A_h\coloneqq
    \frac{1}{N}\sum_{x\in\mathbb Z}\E_{y\le M}\Delta_hg(y)f_1(x)
    &\overline{f_1\bigl(x+P_1(y+h)-P_1(y)\bigr)}
    \prod_{i=2}^{\ell}f_i\bigl(x+P_i(y)-P_1(y)\bigr)\\
    &\quad\times\prod_{i=2}^{\ell}
    \overline{f_i\bigl(x+P_i(y+h)-P_1(y)\bigr)}.
\end{align*}
The factor $f_1(x)$ is the new function at the zero shift. Every new spatial
function is a translate, conjugate, or product of translates of the original
functions. Each remains $1$-bounded and is supported on a set of cardinality at
most $CN$. Thus the spatial normalization in every subsequent
Cauchy--Schwarz step remains $1/N$.

After constant polynomials and repetitions have again been absorbed, the new
polynomial family is
$$
    \{P_i(y)-P_1(y):2\le i\le\ell\}
    \cup
    \{P_i(y+h)-P_1(y):1\le i\le\ell\}.
$$
We use the PET semi-invariant from
\cite[Proof of Theorem~4.2]{Mobius-ergodic}, which associates to each tuple of
polynomials a vector with nonnegative integer entries. In degree $j$, its
entry is the number of distinct leading coefficients among the polynomials of
degree $j$. The vectors are ordered lexicographically from the highest
degree down. If $d_1=\deg P_1$, the entries in degrees greater than $d_1$ are
unchanged. In degree $d_1$, the leading coefficients $c_i$ are replaced by
the nonzero differences $c_i-c_1$, while $P_1(y+h)-P_1(y)$ has degree at most
$d_1-1$. Thus the semi-invariant strictly decreases at every step.

The proof of \cite[Theorem~4.2]{Mobius-ergodic} stops the induction at a linear family and applies its linear
generalized von Neumann estimate. That estimate uses the relation between its
two scales, so we instead continue the PET step until we reach constant
polynomials. If
$c_1,\ldots,c_t$ are the distinct nonzero leading coefficients of a linear
family and $P_1$ has leading coefficient $c_1$, then the new nonzero leading
coefficients are
$$
    c_2-c_1,\ldots,c_t-c_1.
$$
Thus their number decreases by $1$. Repeating this step leaves only constant
polynomials.

The base case has no nonconstant polynomials. After absorbing all constant
factors into $f_0$, it is bounded by
$$
    \frac{1}{N}\sum_{x\in\mathbb Z}|f_0(x)|
    \left|\E_{y\le M}g(y)\right|
    \ll_C \|g\|_{U^1[M]}.
$$
The same quantity is $\ll_C\|g\|_{U^2[M]}$ by monotonicity of the normalized
Gowers norms, so the order may be taken to be at least $2$.
For exceptional values of $h$, degrees or polynomial classes may collapse,
but such degeneracies only lower the PET semi-invariant. We verify that the length of the descent is uniformly bounded. Consider in general a branch starting from at most $m$ polynomials and having semi-invariants with $e$ components. We argue by induction on $e$, with the case $e=0$ being immediate. 

Split the branch into maximal blocks on which the highest component of the semi-invariant is constant. Within each of these blocks, the remaining $e-1$ components strictly decrease, so induction bounds the length of the block in terms of $e$ and the number of polynomials at its beginning. Between consecutive blocks the highest degree component decreases. Since this component is initially at most $m$, there are at most $m+1$ blocks. A PET step creates at most twice as many polynomials. Applying the induction hypothesis successively to the at most $m+1$ blocks therefore bounds the starting size and length of every block in terms of $e$ and $m$, and hence the length of the branch and the size of every descendant family.

Taking $e=d$ and $m=k$, every branch has depth $\ll_{d,k}1$ and every descendant family contains $\ll_{d,k}1$ polynomials.  By monotonicity of the normalized Gowers norms, we may consequently use a common order
$r\ll_{d,k}1$, independent of $h$, such that
$$
    |\mathcal A_h|
    \ll_{C,d,k}\|\Delta_hg\|_{U^r[M]}.
$$
Hence \eqref{e:fixed-scale-pet-step} gives
$$
    |\Lambda_{\mathcal P}(g)|^2
    \ll_{C,d,k}
    \frac{1}{M}\sum_{|h|<M}\|\Delta_hg\|_{U^r[M]}.
$$
The interval cube count gives
$$
    \|\1_{[M]}\|_{U^r(\mathbb Z)}
    \asymp_r M^{(r+1)/2^r}.
$$
Moreover, with $g$ extended by zero to $\mathbb Z$, expanding the Gowers
norms gives the exact identity
$$
    \sum_{h\in\mathbb Z}
    \|\Delta_hg\|_{U^r(\mathbb Z)}^{2^r}
    =
    \|g\|_{U^{r+1}(\mathbb Z)}^{2^{r+1}}.
$$
Only the shifts $|h|<M$ contribute. Hence H\"older's inequality gives
\begin{align*}
    \frac{1}{M}\sum_{|h|<M}\|\Delta_hg\|_{U^r[M]}
    &\le
    \frac{(2M-1)^{1-2^{-r}}}
         {M\|\1_{[M]}\|_{U^r(\mathbb Z)}}
    \left(
    \sum_h\|\Delta_hg\|_{U^r(\mathbb Z)}^{2^r}
    \right)^{2^{-r}}\\
    &=
    \frac{(2M-1)^{1-2^{-r}}
          \|\1_{[M]}\|_{U^{r+1}(\mathbb Z)}^2}
         {M\|\1_{[M]}\|_{U^r(\mathbb Z)}}
    \|g\|_{U^{r+1}[M]}^2\\
    &\ll_r\|g\|_{U^{r+1}[M]}^2.
\end{align*}
This closes the induction.
\end{proof}

\subsection{A local inverse theorem}

We use the following form of the Shao--Wang inverse theorem
\cite[Theorem~2.4]{ShW25}, extending work of \cite{PP22}. We record it because the density increment
argument needs auxiliary integers $q'$ and $b$ that work for every shifted
input.

\begin{lemma}[Local inverse theorem]\label{l:SW-local-form}
Let $C\ge1$, let $q\in\mathbb N$, and let
$P_1,\ldots,P_m\in\mathbb Z[y]$ be polynomials of distinct degrees with
$(C,q)$-coefficients and $\deg(P_1)<\cdots<\deg(P_m)=d$.
Let $N,M\in\mathbb N$ satisfy $M\le (N/q^{d-1})^{1/d}$, and let
$0<\rho<1/2$. Let $F_0,\ldots,F_m\colon\mathbb Z\to\mathbb C$ be
$1$-bounded functions supported on $[N]$. Suppose that
$$
\left|
\frac{1}{N}\sum_x\mathbb E_{y\le M}
F_0(x)F_1(x+P_1(y))\cdots F_m(x+P_m(y))
\right|\ge \rho .
$$
Then either
$$
    M\ll_{C,d}(q/\rho)^{O_d(1)},
$$
or there exist positive integers
$$
    q'\ll_{C,d}\rho^{-O_d(1)},
    \qquad b=O_d(1),
$$
and, for each $1\le i\le m$, a $1$-bounded function
$\phi_i\colon\mathbb Z\to\mathbb C$ such that
$$
    \left|\sum_x F_i(x)\phi_i(x)\right|
    \gg_{C,d}\rho^{O_d(1)}N,
$$
with $\phi_i$ having the Lipschitz property
$$
    |\phi_i(x+q'q^b y)-\phi_i(x)|
    \ll_{C,d}
    (q/\rho)^{O_d(1)}M^{-\deg(P_i)}|y|
    \qquad\text{for all }x,y\in\mathbb Z.
$$
The same integers $q'$ and $b$ work simultaneously for all
$1\le i\le m$.
\end{lemma}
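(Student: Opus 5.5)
This lemma should be a repackaging of \cite[Theorem~2.4]{ShW25}, which extends the local inverse theorem of \cite{PP22}. The plan is to verify that its hypotheses match ours and then translate its conclusion into the form stated, with particular attention to making $q'$ and $b$ uniform in $i$.

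\emph{Step 1: check the hypotheses.} The Shao--Wang theorem is formulated for polynomials of distinct degrees with $(C,q)$-coefficients in the sense of Peluse. It assumes the shift range obeys $M\le (N/q^{d-1})^{1/d}$, so that $P_m(y)\ll_C N$ for $y\le M$. It also assumes $1$-bounded functions supported on $[N]$ whose counting operator, normalized by $1/N$, is at least $\rho$ in absolute value. All of these are hypotheses of our lemma. The only normalization issues are the following.

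\begin{enumerate}[label=(\roman*)]
\item Our average is $\frac1N\sum_x$ rather than an average over a fixed interval. Since the $F_j$ are supported on $[N]$, these agree.
\item The degenerate alternative. Shao--Wang require $M$ large compared with $(q/\rho)^{O(1)}$. If this fails, we are in the first alternative $M\ll_{C,d}(q/\rho)^{O_d(1)}$.
\end{enumerate}

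\emph{Step 2: translate the conclusion for each $i$.} For each $i$, the Shao--Wang conclusion says that $F_i$ correlates, at level $\rho^{O_d(1)}N$, with a local factor projection. Concretely, the correlating function is a $1$-bounded function that is essentially constant along progressions of common difference $q'q^{b}$ and length about $(\rho/q)^{O(1)}M^{\deg P_i}$.

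We rewrite this as a Lipschitz function $\phi_i$. To do so, we convolve the structured function with a normalized Fej\'er-type kernel along $q'q^b\mathbb Z$ at that length. The convolved function satisfies the stated bound
$$|\phi_i(x+q'q^by)-\phi_i(x)|\ll (q/\rho)^{O(1)}M^{-\deg P_i}|y|.$$
The convolution changes the correlation by at most an acceptable amount, because the original function is constant on the atoms except for boundary effects of relative size $O(\rho^{C'})$. If instead the theorem already outputs a Lipschitz dual function of this type, this step is simply a citation.

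\emph{Step 3: uniformity of $q'$ and $b$ in $i$ (the main obstacle).} The theorem may produce a modulus $q_i'\ll\rho^{-O(1)}$ and an exponent $b_i=O_d(1)$ that depend on $i$. We replace them by
$$q'=\operatorname{lcm}(q_1',\dots,q_m'),\qquad b=\max_i b_i.$$
Then $q'\le\prod_i q_i'\ll\rho^{-O_{d}(1)}$, since $m\le d$. A Lipschitz bound along the step $q_i'q^{b_i}$ implies one along any multiple $rq_i'q^{b_i}$, with the constant multiplied by $r$. Here $r\le q'q^{b}$, which is at most $(q/\rho)^{O(1)}$, and this loss is absorbed into the $(q/\rho)^{O_d(1)}$ factor. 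I expect the only delicate point to be confirming that the exponents in the Shao--Wang bounds depend only on $d$, and not on $m$ or $C$ beyond the implied constants.
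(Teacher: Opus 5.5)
Your proposal is correct and follows the paper's proof. You dispose of the small-$M$ alternative and take the per-$i$ outputs $(\phi_i,q_i',b_i)$ of \cite[Theorem~2.4]{ShW25}. You then uniformize with a common multiple of the $q_i'$ (the paper uses the product rather than the lcm) and $b=\max_i b_i$, substituting $y_i=(q'/q_i')q^{b-b_i}y$ and absorbing the multiplier into $(q/\rho)^{O_d(1)}$. Your hedged Step~2 is unnecessary, since the paper takes the Lipschitz form directly from the cited theorem.
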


\begin{proof}
If
$$
    M\ll_{C,d}(q/\rho)^{O_d(1)},
$$
there is nothing to prove. Otherwise, \cite[Theorem~2.4]{ShW25} gives, for
each $1\le i\le m$, a function $\phi_i$ and integers $q'_i,b_i$, which
we initially allow to depend on $i$. Set
$$
    q'\coloneqq\prod_{i=1}^m q'_i,
    \qquad
    b\coloneqq\max_{1\le i\le m}b_i.
$$
Since the degrees are distinct, $m\le d$, and hence
$q'\ll_{C,d}\rho^{-O_d(1)}$ and $b=O_d(1)$. For the function supplied
for the $i$th input, apply its Lipschitz estimate with
$$
    y_i\coloneqq(q'/q'_i)q^{b-b_i}y.
$$
Then $q'_iq^{b_i}y_i=q'q^by$, and the extra multiplier
$(q'/q'_i)q^{b-b_i}$ is at most $(q/\rho)^{O_d(1)}$. It is therefore
absorbed into the stated Lipschitz constant. The correlations are unchanged,
which proves the result.
\end{proof}

\section{Progressions with multiples of a common polynomial}
\label{s:common-polynomial}

We first prove a counting statement that is uniform over the $W$-tricked
family $P_W$ (cf. Section~\ref{ss:overview-common-polynomial}). We then
establish the prime model comparison needed to transfer this
count to shifted primes. Combining the two proves
Theorem~\ref{thm:fixed-polynomial-shifted-prime}, and
Corollary~\ref{thm_progression} follows by taking $P(y)=y$ and $a_i=i$.

\subsection{Uniform counting for a fixed polynomial}
\label{ss:fixed-polynomial-counting}

For a nonzero polynomial $P\in\mathbb Z[y]$ with $P(0)=0$, write
$$
r=\operatorname{ord}_0P
\qquad\text{and}\qquad
d=\deg P.
$$
Thus
$$
P(y)=\sum_{j=r}^d b_jy^j,
\qquad b_rb_d\ne0.
$$

We first record the quantitative counting consequence of
Altman--Sawhney that we need. Let $w$ be a sufficiently large integer, and
set
\begin{equation}\label{e:AS-saturated-W}
W_0=W_{d,w}
\coloneqq
\prod_{\substack{p\le w\\(p,d)=1}}p
\prod_{\substack{p\le w\\(p,d!)>1}}p^{2d}
\prod_{p\le w^{1/2}}p^{\lceil w^{1/3}\rceil}.
\end{equation}
For $w$ sufficiently large
in terms of $d$, this modulus satisfies
\begin{equation}\label{e:AS-W-properties}
8\prod_{p\le w}p\mid W_0,
\qquad P^+(W_0)\le w,
\qquad \mathcal L(w)\mid W_0,
\qquad \log W_0\ll_d w,
\end{equation}
where $\mathcal{L}(w)$ is as in~\eqref{e:L(w)}.

Set $B=|b_r|$, $\sigma=\operatorname{sgn}(b_r)$, and
\begin{equation}\label{e:AS-normalized-polynomial}
\widetilde P(y)\coloneqq \sigma B^{-r-1}P(By)\in\mathbb Z[y].
\end{equation}
The coefficient of $y^r$ in $\widetilde P$ is $1$. Set
\begin{equation}\label{e:AS-total-W}
W\coloneqq BW_0
\end{equation}
and define
\begin{equation}\label{e:AS-PW}
P_W(y)\coloneqq W^{-r}P(Wy)
=\sum_{j=r}^d b_jW^{j-r}y^j\in\mathbb Z[y].
\end{equation}
Observe that
\begin{equation}\label{e:AS-normalization-identity}
P_W(y)=\sigma B\,\widetilde P_{W_0}(y),
\qquad
\widetilde P_{W_0}(y)\coloneqq W_0^{-r}\widetilde P(W_0y).
\end{equation}

For functions $f_0,\ldots,f_{k-1}\colon\mathbb Z\to\mathbb C$ supported on
$[X]$, define
\begin{equation}\label{e:AS-counting-form}
\begin{split}
\Lambda_{P,W,X}(f_0,\ldots,f_{k-1})
\coloneqq{}&2\int_{1/2}^1
\mathbb E_{x\le X}
\mathbb E_{y\le M_z}
\prod_{i=0}^{k-1}f_i\bigl(x+a_iP_W(y)\bigr)\,\mathrm dz,\\
M_z\coloneqq{}&
\left\lfloor
\left(\frac{zBX}{|b_d|W^{d-r}}\right)^{1/d}
\right\rfloor,
\qquad a_0\coloneqq0.
\end{split}
\end{equation}
If $\widetilde b_d$ denotes the leading coefficient of $\widetilde P$, then
\begin{equation}\label{e:AS-leading-scale-identity}
|\widetilde b_d|W_0^{d-r}
=|b_d|B^{d-r-1}W_0^{d-r}
=\frac{|b_d|W^{d-r}}{B}.
\end{equation}
Consequently, the range in \eqref{e:AS-counting-form} is exactly the natural
range in \cite[Definition~2.5]{AS25}.
As usual, a function supported on $[X]$ is extended by zero to all of
$\mathbb Z$.

We also use the model operator from
\cite[Definition~2.5]{AS25}.  In the present normalization it is
\begin{equation}\label{e:AS-model-form}
\begin{split}
\Lambda_{\mathrm{Model}}(f_0,\ldots,f_{k-1})
\coloneqq{}&
\mathbb E_{\substack{x\le X\\
y\le (X/W_0^2)^{1/(2r)}\\
X^{1/2}/2\le z\le X^{1/2}}}
\nu(y)\prod_{i=0}^{k-1}
f_i\bigl(x+\widetilde a_i(\widetilde\epsilon zW_0+1)y^r\bigr),\\
\nu(y)\coloneqq{}&
\frac{r}{d}\left(\frac{X}{W_0^2}\right)^{(d-r)/(2dr)}
y^{-(d-r)/d},
\end{split}
\end{equation}
where $\widetilde a_i=\sigma Ba_i$ and
$\widetilde\epsilon=\widetilde b_d/|\widetilde b_d|$.  The variables $y$
and $z$ in this display are averaged over the indicated integer intervals.

\begin{proposition}[Uniform polynomial counting]
\label{p:AS-uniform-counting}
Fix $k\ge3$, a nonzero polynomial $P\in\mathbb Z[y]$ with $P(0)=0$, and
distinct nonzero integers $a_1,\ldots,a_{k-1}$.  Let $r$ and $d$ be as defined
above.  There exist constants
$C^\sharp\ge1$ and $w^\sharp\ge2$ such that the following
holds. Let $X\in\mathbb N$ and $0<\delta\le1/2$, and define
\begin{equation}\label{e:AS-statistical-size}
\mathfrak m(\delta)\coloneqq
\begin{cases}
\exp\bigl(-C^\sharp(\log(2/\delta))^{C^\sharp}\bigr),
&r=1,\ k=3,\\[1mm]
\exp\bigl(-\exp(C^\sharp(\log(2/\delta))^{C^\sharp})\bigr),
&r=1,\ k\ge4,\\[1mm]
\exp\bigl(-\exp(C^\sharp\delta^{-C^\sharp})\bigr),
&r\ge2.
\end{cases}
\end{equation}
Let $w\ge w^\sharp$, and let $W_0$, $W$, and $P_W$ be given by
\eqref{e:AS-saturated-W}, \eqref{e:AS-total-W}, and \eqref{e:AS-PW},
respectively. Suppose that
\begin{align}
w&\ge
\exp\left((\log(2/\mathfrak m(\delta)))^{C^\sharp}\right),
\label{e:AS-counting-condition-w}\\
X&\ge
W^{C^\sharp}
\exp\left((\log(2/\mathfrak m(\delta)))^{C^\sharp}\right),
\label{e:AS-counting-condition-X}\\
\frac{X^{1/2}}{W}&\ge
\mathfrak m(\delta)^{-C^\sharp}.
\label{e:AS-counting-condition-model}
\end{align}
If $f\colon\mathbb Z\to[0,1]$ is supported on $[X]$ and
$\mathbb E_{x\le X}f(x)\ge\delta$, then
\begin{equation}\label{e:AS-uniform-counting-conclusion}
\Lambda_{P,W,X}(f,\ldots,f)
\ge\mathfrak m(\delta).
\end{equation}
Here $C^\sharp$ and $w^\sharp$ may depend on the fixed polynomial $P$ and the
fixed coefficients $a_i$, but not on $w$.

In particular, for every fixed $\gamma>0$ there are
$c_\gamma>0$, $C_\gamma\ge1$, and $w_\gamma\ge2$, depending only on the
fixed data and $\gamma$, such that the following holds. If $w\ge w_\gamma$,
\begin{equation}\label{e:AS-simplified-size}
X\ge\exp(w^{C_\gamma})
\end{equation}
and
\begin{equation}\label{e:AS-simplified-density}
\delta\ge
\begin{cases}
\exp\bigl(-(\log w)^{c_\gamma}\bigr),
&r=1,\ k=3,\\[1mm]
\exp\bigl(-(\log\log w)^{c_\gamma}\bigr),
&r=1,\ k\ge4,\\[1mm]
(\log\log w)^{-c_\gamma},
&r\ge2,
\end{cases}
\end{equation}
then
\begin{equation}\label{e:AS-simplified-conclusion}
\Lambda_{P,W,X}(f,\ldots,f)>w^{-\gamma}.
\end{equation}
\end{proposition}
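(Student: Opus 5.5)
The plan is to follow the Altman--Sawhney argument \cite{AS25} and track how each constant depends on $W_0$. The reduction in \eqref{e:AS-normalization-identity} writes $P_W=\sigma B\widetilde P_{W_0}$, and \eqref{e:AS-leading-scale-identity} shows that $\Lambda_{P,W,X}$ is exactly the natural counting operator of \cite[Definition~2.5]{AS25} for $\widetilde P_{W_0}$ with coefficients $\widetilde a_i=\sigma Ba_i$. So it suffices to prove the bound for the normalized polynomial $\widetilde P$, whose $y^r$ coefficient is $1$, with the modulus $W_0$ from \eqref{e:AS-saturated-W}.

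The proof has two steps.

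\emph{Comparison step.} I would show that
$$
\bigl|\Lambda_{P,W,X}(f,\ldots,f)-\Lambda_{\mathrm{Model}}(f,\ldots,f)\bigr|\le \mathfrak m(\delta)/2 .
$$
I would telescope the difference, changing one input $f$ at a time. A PET/generalized von Neumann argument (in the spirit of Proposition~\ref{l:fixed-scale-gvn}, or Altman--Sawhney's Gowers--Peluse norm control) bounds each difference by a Gowers--Peluse norm of a dual function. If some difference is $\ge\mathfrak m(\delta)/(2k)$, the quasipolynomial inverse theorem for Gowers norms (Leng--Sah--Sawhney) replaces that function by a nilsequence of complexity $\exp((\log(2/\mathfrak m))^{O(1)})$ while keeping a correlation of size $\mathfrak m^{O(1)}$.

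The nilsequence comparison estimate of \cite{AS25} then shows that, for such nilsequences, the two operators agree up to an error. That error is small provided:
\begin{itemize}
\item $W_0$ is saturated, meaning divisible by high powers of every prime up to $w^{1/2}$; the exponent $\lceil w^{1/3}\rceil$ and $\mathcal L(w)\mid W_0$ provide this;
\item $w$ exceeds $\exp((\log(2/\mathfrak m))^{C})$;
\item $X$ is large relative to $W^{C}$.
\end{itemize}
These are exactly hypotheses \eqref{e:AS-counting-condition-w}--\eqref{e:AS-counting-condition-X}. The saturation is what makes the equidistribution of $\widetilde P_{W_0}(y)$ along nilsequences match that of $(\widetilde\epsilon zW_0+1)y^r$, uniformly in $w$. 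This contradicts the assumed large difference.

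\emph{Lower bound for the model.} For fixed $y$ and $z$, set $u=(\widetilde\epsilon zW_0+1)y^r$. The inner average is then a count of the progressions $x+\widetilde a_iu$. Splitting $x$ into residue classes modulo $\widetilde\epsilon zW_0+1$ and rescaling gives a configuration $x',\,x'+\widetilde a_iy^r$ on intervals of length about $X^{1/2}/W_0$. This length is at least $\mathfrak m^{-C}$ by \eqref{e:AS-counting-condition-model}.

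On most classes the density is $\gg\delta$, by averaging. Apply the appropriate count:
\begin{itemize}
\item $r=1$, $k=3$: Kelley--Meka \cite{Kelley-Meka}, which gives $\exp(-C\log(2/\delta)^{C})$;
\item $r=1$, $k\ge4$: Green--Tao/Leng--Sah--Sawhney \cite{GT09,LSS-Szemeredi}, in supersaturated (Varnavides) form, with one extra exponential;
\item $r\ge2$: Prendiville \cite{prendiville} for $x+\widetilde a_iy^r$, which gives the doubly exponential in $\delta^{-C}$.
\end{itemize}
The weight $\nu(y)$ is a smooth, power-type density, so dyadic decomposition in $y$ reduces it to the unweighted count. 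This gives $\Lambda_{\mathrm{Model}}\ge2\mathfrak m(\delta)$ after adjusting $C^\sharp$, and hence \eqref{e:AS-uniform-counting-conclusion}.

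For the simplified form, take $C_\gamma$ large. With $\delta$ as in \eqref{e:AS-simplified-density} and $c_\gamma$ small, we get $\mathfrak m(\delta)\ge w^{-\gamma/2}$ in each case, so $\log(2/\mathfrak m)\ll\gamma\log w$. The first two regimes are routine exponent checks; in the third, $\exp(-\exp(C\delta^{-C}))\ge w^{-\gamma/2}$ holds once $\delta^{-C}\le c\log\log w$. Hypotheses \eqref{e:AS-counting-condition-w}--\eqref{e:AS-counting-condition-model} then follow from $X\ge\exp(w^{C_\gamma})$ and $\log W\ll w$, provided \eqref{e:AS-counting-condition-w} can be verified. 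As stated that is not automatic: it requires $w\ge\exp((\log(2/\mathfrak m))^{C^\sharp})$, which with $\log(2/\mathfrak m)\ll\log w$ reads $\log w\ge(\log w)^{C^\sharp}$ and fails if $C^\sharp>1$. I would therefore try to prove the comparison step with the dependence on $w$ weakened to a power, $w\ge(2/\mathfrak m)^{C^\sharp}$; after shrinking $\gamma$, this is compatible with $\mathfrak m\ge w^{-\gamma/2}$. Failing that, I would shrink $c_\gamma$ further. Whether one of these closes the gap depends on the precise dependence extracted in the next step, so I regard it as open.

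The main obstacle is the comparison step. The difficulty is to check that every constant in the Altman--Sawhney nilsequence comparison depends on $W_0$ only through the saturation properties \eqref{e:AS-W-properties} and polynomially on $W$, so that the bounds are uniform over the family $\widetilde P_{W_0}$. I would first try to quote \cite{AS25} directly with $W_0$ as their smooth modulus, and check that their dependence on $w$ is the power type needed above.
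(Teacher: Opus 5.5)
Your outline follows the paper's route. You normalize to $\widetilde P_{W_0}$ and compare $\Lambda_{P,W,X}$ with the Altman--Sawhney model via dual functions, Gowers--Peluse control, the Leng--Sah--Sawhney inverse theorem and \cite[Lemma~2.9]{AS25}. You then bound the model from below by splitting modulo $\widetilde\epsilon zW_0+1$ and quoting Kelley--Meka, Green--Tao/Leng--Sah--Sawhney or Prendiville. (The paper uses the pointwise bound $\nu(y)\ge r/d$ rather than a dyadic decomposition.)

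\textbf{The gap: uniformity in $W_0$.} This is the step you call the main obstacle and leave to be checked, and it is the whole content of the proposition beyond \cite{AS25}. Since $\log W_0\gg w$ and \cite[Lemma~2.9]{AS25} needs $w\ge(\mathcal M/\rho)^{O(\mathcal D^{O(1)})}$, the nilsequence dimension $\mathcal D$ and complexity $\mathcal M$ must be independent of $W_0$. Even polynomial dependence on $W$ there is fatal; $W_0$ may enter only the size hypotheses. The paper achieves this in the following steps.
\begin{enumerate}
\item When the Gowers--Peluse control is along $W_0^{d-r}$-dilated differences, it restricts the dual function $D_j$ to residue classes modulo $Q=W_0^{d-r}$. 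It passes to ordinary Gowers norms on each class via \cite[Lemma~B.1]{AS25} and applies the inverse theorem classwise, never patching the classwise nilsequences together globally.
\item It repeats this for every slot $j=0,\ldots,k-1$ of the single form $\Lambda_{\mathrm{Diff}}=\Lambda_{P,W,X}-\Lambda_{\mathrm{Model}}$, keeping $|\Lambda_{\mathrm{Diff}}|$ large. ``Telescoping one input at a time'' does not relate two different forms, and replacing a single input does not let you invoke the one-variable comparison, which needs every input to be a nilsequence.
\item It removes the cutoffs $\1_{[X]}$ via \cite[Lemma~B.2]{AS25} and fixes $x_0$.
\item It splits the resulting one-variable discrepancy by the residue of the argument modulo $W_0^{d-r}$. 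By \cite[Lemma~2.2]{AS25}, $\widetilde P_{W_0}(y)$ and $(\widetilde\epsilon zW_0+1)y^r$ give identical class weights on complete blocks.
\end{enumerate}
On one admissible class, the product of the classwise nilsequences is a single nilsequence with $\mathcal D\le(\log(2/\eta))^{O(1)}$ and $\mathcal M\le\exp((\log(2/\eta))^{O(1)})$, uniformly in $W_0$. The coefficient hypothesis of Lemma~2.9 concerns only the fixed $\widetilde P$.

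\textbf{The simplified form is not actually open.} The obstruction you raise for \eqref{e:AS-counting-condition-w} comes from using only $\mathfrak m(\delta)\ge w^{-\gamma/2}$, which is far from sharp. Under \eqref{e:AS-simplified-density}:
\begin{itemize}
\item in the first regime, $\log(2/\mathfrak m(\delta))\ll(\log w)^{c_\gamma C^\sharp}$;
\item in the other two regimes, $\log(2/\mathfrak m(\delta))\le\exp\bigl(O((\log\log w)^{c_\gamma C^\sharp})\bigr)=(\log w)^{o(1)}$ once $c_\gamma C^\sharp<1$.
\end{itemize}
Taking $c_\gamma<1/(2(C^\sharp)^2)$ therefore gives $\log(2/\mathfrak m(\delta))\le(\log w)^{1/(2C^\sharp)}$ for large $w$. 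This yields \eqref{e:AS-counting-condition-w} and $\mathfrak m(\delta)>w^{-\gamma}$ at once. So your second fallback, shrinking $c_\gamma$, already works, and no power-type dependence on $w$ is needed in the comparison step.
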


\begin{remark}\label{r:AS-headline-not-enough}
The final density theorem in~\cite[Theorem~1.1]{AS25} does not by itself imply
Proposition~\ref{p:AS-uniform-counting}. Indeed, $P_W$ varies with $W$, while
the implicit constant and the lower threshold for $X$ in a theorem stated for
a fixed polynomial may depend on all its coefficients. The parameter uniform
counting statement above is therefore needed.
\end{remark}

\begin{proof}
We use \eqref{e:AS-normalization-identity} and
\eqref{e:AS-leading-scale-identity} throughout.  Thus
$a_iP_W=\widetilde a_i\widetilde P_{W_0}$, where
\begin{align*}
\widetilde a_i=\sigma Ba_i,
\end{align*}
and the range in \eqref{e:AS-counting-form} is exactly the range in
\cite[Definition~2.5]{AS25} for the fixed normalized polynomial
$\widetilde P$.  We enlarge $w^\sharp$ so that $w$ dominates every
coefficient of $\widetilde P$ and every $\widetilde a_i$.

\emph{The comparison estimate.}
We claim that there is a constant $C\ge1$, depending only on the fixed data, such that for
$0<\eta<1/2$ one has
\begin{equation}\label{e:AS-comparison-claim}
\left|
\Lambda_{P,W,X}(f_0,\ldots,f_{k-1})
-\Lambda_{\mathrm{Model}}(f_0,\ldots,f_{k-1})
\right|\le\eta
\end{equation}
for all $1$-bounded inputs supported on $[X]$, provided
\begin{equation}\label{e:AS-comparison-claim-hypotheses}
w\ge\exp\bigl((\log(2/\eta))^C\bigr)
\quad\text{and}\quad
X\ge W^C\exp\bigl((\log(2/\eta))^C\bigr).
\end{equation}
We now prove this estimate. Write
\begin{align*}
\Lambda_{\mathrm{Diff}}
\coloneqq \Lambda_{P,W,X}-\Lambda_{\mathrm{Model}}.
\end{align*}
For $0\le j\le k-1$ and $1$-bounded functions $g_i$ supported on $[X]$,
define
\begin{align*}
D_j(x)\coloneqq{}&\1_{[X]}(x)
\left(
2\int_{1/2}^1\mathbb E_{y\le M_z}
\prod_{i\ne j}g_i\bigl(x+(\widetilde a_i-\widetilde a_j)
\widetilde P_{W_0}(y)\bigr)\,\mathrm dz\right.\\
&\left.\hspace{16mm}
-\mathbb E_{\substack{y\le (X/W_0^2)^{1/(2r)}\\
X^{1/2}/2\le z\le X^{1/2}}}
\nu(y)\prod_{i\ne j}g_i\bigl(x+(\widetilde a_i-\widetilde a_j)
(\widetilde\epsilon zW_0+1)y^r\bigr)
\right).
\end{align*}
Translating $x$ by $-\widetilde a_j\widetilde P_{W_0}(y)$ in the polynomial
term and by $-\widetilde a_j(\widetilde\epsilon zW_0+1)y^r$ in the model
term, with all functions extended by zero, gives the exact identities
\begin{align}\label{e:Lambdadiff}
\Lambda_{\mathrm{Diff}}(g_0,\ldots,g_{k-1})
&=\mathbb E_{x\le X}D_j(x)g_j(x),\\
\mathbb E_{x\le X}|D_j(x)|^2
&=\Lambda_{\mathrm{Diff}}
(g_0,\ldots,g_{j-1},\overline{D_j},g_{j+1},\ldots,g_{k-1}).
\end{align}
Also $D_j$ is supported on $[X]$ and $\|D_j\|_\infty\ll_d1$, since
$\mathbb E_{y\le (X/W_0^2)^{1/(2r)}}\nu(y)\ll_d1$.

Suppose that the left-hand side of \eqref{e:AS-comparison-claim} is larger
than $\eta$, and set $\eta_0=\eta$.  At the $j$th stage, let $g_i$ be the
inputs obtained at the preceding stages and suppose that
$|\Lambda_{\mathrm{Diff}}(g_0,\ldots,g_{k-1})|\ge\eta_j$.  Then~\eqref{e:Lambdadiff} and Cauchy--Schwarz give
\begin{align*}
\mathbb E_{x\le X}|D_j(x)|^2\ge\eta_j^2.
\end{align*}
Choose a fixed constant $C_D\gg_d1$ such that
$G_j\coloneqq C_D^{-1}\overline{D_j}$ is $1$-bounded. Applying
\cite[Lemmas~2.6 and~2.7]{AS25} to~\eqref{e:Lambdadiff} therefore gives
\begin{align*}
\|G_j\|_{U^s_{\mathrm{GP}}
[X;\mu_{W_0^{d-r}\cdot[\pm X/W_0^{d-r}]}]}
\gg\eta_j^{C_0}
\qquad\text{or}\qquad
\|G_j\|_{U^s_{\mathrm{GP}}
[X;\mu_{[\pm X]}]}
\gg\eta_j^{C_0},
\end{align*}
for fixed $s\ge2$ and $C_0$ depending only on the fixed data. In the first
case set $q_j=W_0^{d-r}$, and in the second set $q_j=1$. There is a constant
$c_1>0$, depending only on the fixed data, such that the relevant norm is at
least $\kappa_j\coloneqq c_1\eta_j^{C_0}$.

We now restrict $G_j$ to residue classes modulo $q_j$. After increasing $C$
in \eqref{e:AS-comparison-claim-hypotheses}, we have $X\ge2q_j$. For $c$
modulo $q_j$, let
$$
    I_c\coloneqq\{n\in\mathbb Z:c+q_jn\in[X]\}.
$$
Thus $|I_c|=X/q_j+O(1)$. After translating $I_c$ to $[|I_c|]$, denote the
resulting restriction of $G_j(c+q_j\,\cdot)$ by $G_{j,c}$, and set
$$
    u_c\coloneqq
    \|G_{j,c}\|_{U^s_{\mathrm{GP}}
    [|I_c|;\mu_{[\pm X/q_j]}]}.
$$
Since the support of $\mu_{q_j\cdot[\pm X/q_j]}$ consists of multiples of
$q_j$, the definition of the Gowers--Peluse norm gives the exact identity
$$
\|G_j\|_{U^s_{\mathrm{GP}}[X;\mu_{q_j\cdot[\pm X/q_j]}]}^{2^s}
=\sum_{c\bmod q_j}\frac{|I_c|}{X}u_c^{2^s}.
$$
Every $u_c$ is at most $1$. Hence, for
$$
    \mathcal C_j\coloneqq
    \{c\bmod q_j:u_c\ge\kappa_j/2\},
$$
we have
\begin{equation}\label{e:AS-good-class-mass}
    \sum_{c\in\mathcal C_j}\frac{|I_c|}{X}
    \ge(1-2^{-2^s})\kappa_j^{2^s};
\end{equation}
indeed, the contribution of the complementary classes is at most
$(\kappa_j/2)^{2^s}$, whereas every $u_c^{2^s}$ is at most $1$.

Fix $c\in\mathcal C_j$. We have the pointwise measure bound
$$
    \mu_{[\pm X/q_j]}\le2\mu_{[\pm|I_c|]}.
$$
Hence \cite[Lemma~B.1(ii)--(iii)]{AS25} gives
$$
    \|G_{j,c}\|_{U^s[|I_c|]}\gg_s\kappa_j^{O_s(1)}.
$$
Since $q_j\le W_0^{d-r}\le W^{d-r}$, the hypotheses in
\eqref{e:AS-comparison-claim-hypotheses}, with $C$ increased if necessary,
make $|I_c|=X/q_j+O(1)$ large enough for the inverse theorem below.
The Leng--Sah--Sawhney quasipolynomial inverse theorem
\cite[Theorem~2.13]{AS25}, applied separately for each
$c\in\mathcal C_j$, supplies a $1$-bounded nilsequence $\Phi_{j,c}$ on
$\mathbb Z$ for which
\begin{align*}
\left|
\mathbb E_{n\in I_c}D_j(c+q_jn)\Phi_{j,c}(n)
\right|
\ge\varepsilon_j,
\qquad
\varepsilon_j\ge
\exp\bigl(- (\log(2/\kappa_j))^{C_0}\bigr),
\end{align*}
after increasing $C_0$. Here we have conjugated the correlation supplied by
the inverse theorem and restored the fixed factor $C_D$. We multiply each
$\Phi_{j,c}$ by a
unit complex number so that its local correlation is real and nonnegative.
Define
\begin{align*}
F_j(c+q_jn)\coloneqq
\begin{cases}
\Phi_{j,c}(n),&c\in\mathcal C_j,\\
0,&c\notin\mathcal C_j,
\end{cases}
\qquad n\in\mathbb Z.
\end{align*}
Thus $F_j$ is $1$-bounded, and the zero function is the nilsequence used on
each discarded class. The phase choices make the local correlations add
together. By \eqref{e:AS-good-class-mass},
\begin{align*}
\mathbb E_{x\le X}D_j(x)F_j(x)
&=\sum_{c\in\mathcal C_j}\frac{|I_c|}{X}
\mathbb E_{n\in I_c}D_j(c+q_jn)\Phi_{j,c}(n)
\gg_s\kappa_j^{2^s}\varepsilon_j,
\end{align*}
which is at least $\exp(- (\log(2/\eta_j))^{C_0})$ after one further
increase of $C_0$.

Set
\begin{equation}\label{e:AS-classwise-modulus}
Q\coloneqq W_0^{d-r}.
\end{equation}
When $q_j=Q$, the function $F_j$ is a
nilsequence on every residue class modulo $Q$, including the zero
nilsequences on the discarded classes. When $q_j=1$, it is a global
nilsequence, and its restrictions to the residue classes modulo $Q$ are
obtained by precomposing its polynomial orbit with an affine map. Thus in
both cases the restriction of $F_j$ to every residue class modulo $Q$ is a
nilsequence. We may choose $\eta_{j+1}$ so that
\begin{align*}
\left|
\mathbb E_{x\le X}D_j(x)F_j(x)
\right|&\ge\eta_{j+1},\\
\eta_{j+1}&\coloneqq
\exp\bigl(- (\log(2/\eta_j))^{C_0}\bigr).
\end{align*}
The classwise nilsequences have dimension at most
$(\log(2/\eta_j))^{C_0}$ and complexity at most
$\exp((\log(2/\eta_j))^{C_0})$. These bounds are uniform in $Q$, even though the underlying nilmanifolds, Lipschitz functions, and polynomial maps may depend on $Q$ and on the residue class.

Moreover,
\begin{align*}
\bigl|\Lambda_{\mathrm{Diff}}
(g_0,\ldots,g_{j-1},F_j\1_{[X]},g_{j+1},\ldots,g_{k-1})\bigr|
=\bigl|\mathbb E_{x\le X}D_j(x)F_j(x)\bigr|
\ge\eta_{j+1}.
\end{align*}
We may therefore repeat this for $j=0,\ldots,k-1$. As $k$ is fixed, we
eventually obtain
\begin{align*}
\left|\Lambda_{\mathrm{Diff}}
(F_0\1_{[X]},\ldots,F_{k-1}\1_{[X]})\right|&\ge\rho,\\
\rho&\ge\exp\bigl(- (\log(2/\eta))^{C_1}\bigr),\\
\mathcal D&\le(\log(2/\eta))^{C_1},
\qquad
\mathcal M\le\exp\bigl((\log(2/\eta))^{C_1}\bigr),
\end{align*}
where, uniformly in $Q$, $\mathcal D$ bounds the dimensions and
$\mathcal M$ bounds the filtered nilmanifold complexities and the Lipschitz norms of the nilsequence restrictions of the functions $F_j$ to the residue classes modulo $Q$. 
This classwise conclusion is sufficient: after fixing $x_0$ below and
conditioning the argument of $H$ modulo $Q$, every factor is restricted to
one residue class and the factors combine into a single nilsequence. (We do
not patch the classwise nilsequences globally, since doing so would introduce
$Q$ into the quantitative complexity bounds.)

We next remove the cutoffs $\1_{[X]}$ attached to the functions
$F_j$ without a loss depending on $X$. Set $u=\rho/(100k)$. By a standard
Fourier approximation lemma \cite[Lemma~B.2]{AS25}, there exists a function
$\psi_u\colon\mathbb Z\to[0,1]$ such that
\begin{align*}
\psi_u(n)&=1 &&\text{for }n\in[uX,(1-u)X],\\
\psi_u(n)&=0 &&\text{for }n\notin[-uX,(1+u)X],
\end{align*}
and
\begin{align*}
\psi_u(n)&=\int_0^1\widehat\psi_u(\theta)e(\theta n)\,\mathrm d\theta,
&
\int_0^1|\widehat\psi_u(\theta)|\,\mathrm d\theta
&\ll u^{-O(1)}.
\end{align*}
The size condition needed to apply the lemma follows from
\eqref{e:AS-comparison-claim-hypotheses}, after increasing $C$.

For fixed polynomial variables, replacing each $\1_{[X]}$ by $\psi_u$
changes the polynomial form only when one of the shifted arguments lies
within distance $uX$ of an endpoint of $[X]$. There are at most $O(kuX)$
such values of the base point, so this contribution is $O(ku)$. The
contribution to the model form is also $O_d(ku)$, since
\begin{align*}
\mathbb E_{1\le y\le (X/W_0^2)^{1/(2r)}}\nu(y)\ll_d1.
\end{align*}
We may therefore replace all the cutoffs by $\psi_u$ and retain a discrepancy
of size $\gg\rho$. Fourier inversion and the triangle inequality then give
frequencies which may be absorbed into the classwise nilsequences, so that,
after renaming the resulting functions,
\begin{align}\label{e:AS-cutoff-free-discrepancy}
\left|\Lambda_{\mathrm{Diff}}(F_0,\ldots,F_{k-1})\right|&\ge\rho',\\
\rho'&\ge\exp\bigl(-(\log(2/\eta))^{C_2}\bigr).\notag
\end{align}
The same bounds for $\mathcal D$ and $\mathcal M$ continue to hold after
increasing $C_2$.

Expanding \eqref{e:AS-cutoff-free-discrepancy} and averaging over $x$, we may
fix $x_0$ such that the absolute value of the resulting inner difference is
at least $\rho'$. Set
\begin{align*}
H(n)\coloneqq\prod_{i=0}^{k-1}F_i(x_0+\widetilde a_i n).
\end{align*}
With this notation,
\begin{equation}\label{e:AS-one-variable-discrepancy}
\Biggl|
2\int_{1/2}^1\mathbb E_{y\le M_z}
H\bigl(\widetilde P_{W_0}(y)\bigr)\,\mathrm dz
-
\mathbb E_{\substack{
y\le (X/W_0^2)^{1/(2r)}\\
X^{1/2}/2\le z\le X^{1/2}}}
\nu(y)H\bigl((\widetilde\epsilon zW_0+1)y^r\bigr)
\Biggr|
\ge\rho'.
\end{equation}
We split each of these averages according to the residue class modulo
$W_0^{d-r}$ of the argument of $H$.
The hypotheses of \cite[Lemma~2.2]{AS25} hold for
$\widetilde P_{W_0}$: its coefficient of $y^r$ is $1$, while every higher
coefficient has the required divisibility by the primes below $w$.
They also hold for $(\widetilde\epsilon zW_0+1)y^r$, where
$\widetilde\epsilon=\widetilde b_d/|\widetilde b_d|$ as defined after
\eqref{e:AS-model-form}, since its coefficient is congruent to $1$ modulo
all the required prime powers. Hence, on every complete block of length
$W_0^{d-r}$, the two polynomials assign the same proportion to each residue
class modulo $W_0^{d-r}$, and this proportion is independent of $z$. For
each fixed $z$, truncate each $y$-range in
\eqref{e:AS-one-variable-discrepancy} to the
largest union of such complete blocks. In the first average the discarded
proportion is $O_d(W_0^{C_d}X^{-1/d})$ for some $C_d>0$. In the second average, the discarded
values of $y$ lie at the upper end of the range, where $\nu(y)\ll_d1$, so
their total contribution is
\begin{align*}
O_d\left(\frac{W_0^{d-r}}{(X/W_0^2)^{1/(2r)}}\right).
\end{align*}
Every residue class in the image of $\widetilde P_{W_0}$ modulo
$W_0^{d-r}$ has complete block density at least $W_0^{-(d-r)}$. Thus,
after conditioning on a residue class, the changes in the conditional
averages and in their class weights are $O_d(W_0^{C_d}X^{-c_d})$ for some
$c_d>0$, after increasing $C_d$ if necessary.
For every residue class $\ell$ in this image, define
\begin{align*}
\mathcal A_\ell\coloneqq{}&
2\int_{1/2}^1
\mathbb E_{\substack{y\le M_z\\
\widetilde P_{W_0}(y)\equiv\ell\pmod{W_0^{d-r}}}}
H(\widetilde P_{W_0}(y))\,\mathrm dz,\\
\mathcal B_\ell\coloneqq{}&
\mathbb E_{\substack{
y\le (X/W_0^2)^{1/(2r)},\ X^{1/2}/2\le z\le X^{1/2}\\
(\widetilde\epsilon zW_0+1)y^r
\equiv\ell\pmod{W_0^{d-r}}}}
\nu(y)H((\widetilde\epsilon zW_0+1)y^r).
\end{align*}
On the complete blocks, \cite[Lemma~2.2]{AS25} gives common nonnegative
class weights whose sum is $1$. Consequently, the expression inside the
absolute value in \eqref{e:AS-one-variable-discrepancy} is, up to an error
$O_d(W_0^{C_d}X^{-c_d})$, a convex combination of
$\mathcal A_\ell-\mathcal B_\ell$ over these residue classes. After increasing $C$ in
\eqref{e:AS-comparison-claim-hypotheses}, this error is smaller than a
sufficiently small fixed multiple of $\rho'$. Hence there is a residue class
$\ell$ in the image of $\widetilde P_{W_0}$ such that
\begin{align*}
|\mathcal A_\ell-\mathcal B_\ell|\gg\rho'.
\end{align*}
The residue class $\ell$ is admissible in the terminology preceding
\cite[Lemma~2.9]{AS25},
since by construction there exists $y$ such that
\begin{align*}
\widetilde P_{W_0}(y)\equiv\ell\pmod{W_0^{d-r}}.
\end{align*}

Recall the choice of $Q$ from \eqref{e:AS-classwise-modulus} and write
$n=\ell+Qt$. For each $i$, the argument
$x_0+\widetilde a_i n$ then stays in a fixed residue class modulo $Q$ and is
affine in $t$. Hence the classwise nilsequence representing $F_i$ on that
class remains a nilsequence after this affine precomposition, and their
product is a single nilsequence in $t$ on a product nilmanifold. Its dimension is
$O_k(\mathcal D)$ and its complexity is $\mathcal M^{O_k(1)}$, uniformly in $Q$; affine
reparametrization on a progression does not change these bounds.  We may
enlarge the complexity by a fixed
constant so that it dominates the coefficients of $\widetilde P$.
The two size hypotheses in \cite[Lemma~2.9]{AS25} are now implied by
\begin{align*}
w&\ge\exp\bigl((\log(2/\eta))^C\bigr),\\
X&\ge W_0^C\exp\bigl((\log(2/\eta))^C\bigr),
\end{align*}
because
\begin{align*}
(\mathcal M/\rho')^{O(\mathcal D^{O(1)})}
\le\exp\bigl((\log(2/\eta))^{O(1)}\bigr).
\end{align*}
These bounds follow from \eqref{e:AS-comparison-claim-hypotheses}, since
$W_0\le W$. The nilsequence comparison result of
\cite[Lemma~2.9]{AS25}, applied with a sufficiently small fixed multiple of
$\rho'$ as its error parameter, contradicts
$|\mathcal A_\ell-\mathcal B_\ell|\gg\rho'$. This proves
\eqref{e:AS-comparison-claim}. Here, the coefficient hypothesis in~\cite[Lemma~2.9]{AS25} concerns the coefficients of the fixed polynomial $\widetilde P$, which are independent of $Q$, instead of the coefficients of the polynomial map defining the nilsequence. The growing
modulus $W_0$ occurs separately in the size
hypotheses.

\emph{The model lower bound.}
We record explicitly the quantitative counting input used here.
There are constants $c_P>0$ and $C_3\ge1$, depending only on the fixed
data, with the following property. Define
$$
\mathfrak b(\delta)=
\begin{cases}
\exp\bigl(-C_3(\log(2/\delta))^{C_3}\bigr),&r=1,\ k=3,\\
\exp\bigl(-\exp(C_3(\log(2/\delta))^{C_3})\bigr),&r=1,\ k\ge4,\\
\exp\bigl(-\exp(C_3\delta^{-C_3})\bigr),&r\ge2
\end{cases}.
$$
Let $s\in\{-1,1\}$, let $I$ be an interval of cardinality $H$, and let
$g\colon\mathbb Z\to[0,1]$ be supported on $I$ with
$\mathbb E_{n\in I}g(n)\ge\delta/2$. Assume that
$H\ge\mathfrak b(\delta)^{-C_3}$. Then
\begin{equation}\label{e:AS-homogeneous-counting}
\mathbb E_{n\in I}\mathbb E_{1\le y\le c_PH^{1/r}}
\prod_{i=0}^{k-1}g(n+s\widetilde a_i y^r)
\ge\mathfrak b(\delta).
\end{equation}
This is the quantitative counting input
collected in \cite[Lemma~2.11]{AS25}. In the case where $g$ is an indicator of
a set, the case $r=1$, $k=3$ follows from
Kelley--Meka~\cite{Kelley-Meka}, the remaining linear estimates from
Green--Tao~\cite{GT09} and Leng--Sah--Sawhney~\cite{LSS-Szemeredi}, and the
homogeneous polynomial estimate from Prendiville~\cite{prendiville} and the
standard Varnavides averaging.  The functional statement follows from the
set version by applying it to
$\{n\in I:g(n)\ge\delta/4\}$, which has density at least $\delta/4$, and
absorbing the factor $(\delta/4)^k$ into $\mathfrak b(\delta)$.

We apply this input to the model form. For an integer
$z\in[X^{1/2}/2,X^{1/2}]$, set
\begin{align*}
q_z\coloneqq|\widetilde\epsilon zW_0+1|,
\qquad
s_z\coloneqq\operatorname{sgn}(\widetilde\epsilon zW_0+1),
\end{align*}
and, for $c$ modulo $q_z$, define
\begin{align*}
I_{c,z}&\coloneqq\{n\in\mathbb Z:c+q_zn\in[X]\},\\
f_{c,z}(n)&\coloneqq f(c+q_zn).
\end{align*}
For fixed $z$ and $y$ we have the exact identity
$$
\mathbb E_{x\le X}\prod_{i=0}^{k-1}
f\bigl(x+s_zq_z\widetilde a_i y^r\bigr)
=
\frac{1}{X}\sum_{c\bmod q_z}\sum_{n\in I_{c,z}}
\prod_{i=0}^{k-1}f_{c,z}(n+s_z\widetilde a_i y^r).
$$
Moreover, $q_z\asymp W_0X^{1/2}$, and hence
\begin{align*}
|I_{c,z}|\asymp\frac{X^{1/2}}{W_0}
\end{align*}
uniformly in $c$ and $z$.  If
$\alpha_{c,z}=\mathbb E_{n\in I_{c,z}}f_{c,z}(n)$, then
\begin{align*}
\sum_{c\bmod q_z}|I_{c,z}|\alpha_{c,z}
=\sum_{x\le X}f(x)\ge\delta X.
\end{align*}
Since $0\le\alpha_{c,z}\le1$, the classes $c\bmod q_z$ with
$\alpha_{c,z}\ge\delta/2$ satisfy
$$
\sum_{\substack{c\bmod q_z\\\alpha_{c,z}\ge\delta/2}}
|I_{c,z}|\ge\frac{\delta X}{2}.
$$

For every such class, \eqref{e:AS-homogeneous-counting} applies because
by \eqref{e:AS-counting-condition-model},
\begin{align*}
\frac{X^{1/2}}{W_0}=B\frac{X^{1/2}}{W}
\ge B\mathfrak m(\delta)^{-C^\sharp}.
\end{align*}
After increasing $C^\sharp$, this is larger than
$\mathfrak b(\delta)^{-C_3}$. Since
\begin{align*}
|I_{c,z}|\asymp\frac{X^{1/2}}{W_0}
\qquad\text{and}\qquad
\left(\frac{X}{W_0^2}\right)^{1/(2r)}
=\left(\frac{X^{1/2}}{W_0}\right)^{1/r},
\end{align*}
after decreasing $c_P$ by a fixed factor we have
\begin{align*}
c_P|I_{c,z}|^{1/r}
\le\left(\frac{X}{W_0^2}\right)^{1/(2r)}.
\end{align*}
This subrange has length $\asymp(X/W_0^2)^{1/(2r)}$. As the summands are
nonnegative, retaining this subrange in the average over
$y\le(X/W_0^2)^{1/(2r)}$ loses only a fixed factor. Summing over the dense
residue classes $c$ gives
\begin{align*}
\mathbb E_{x\le X}\mathbb E_{y\le (X/W_0^2)^{1/(2r)}}
\prod_{i=0}^{k-1}f\bigl(x+\widetilde a_i
(\widetilde\epsilon zW_0+1)y^r\bigr)
\gg\delta\mathfrak b(\delta).
\end{align*}
Finally, for all $1\le y\le(X/W_0^2)^{1/(2r)}$, we have
\begin{align*}
\nu(y)
=\frac{r}{d}
\left(
\frac{(X/W_0^2)^{1/(2r)}}{y}
\right)^{(d-r)/d}
\ge\frac{r}{d}.
\end{align*}
Choosing $C^\sharp$ sufficiently large in terms of $C_3$ ensures
$\delta\mathfrak b(\delta)\gg\mathfrak m(\delta)$.  Averaging in $z$ and
adjusting this choice by a fixed amount therefore gives
\begin{equation}\label{e:AS-model-lower}
\Lambda_{\mathrm{Model}}(f,\ldots,f)
\ge2\mathfrak m(\delta).
\end{equation}

Take $\eta=\mathfrak m(\delta)$. Conditions
\eqref{e:AS-counting-condition-w} and
\eqref{e:AS-counting-condition-X} permit us to use
\eqref{e:AS-comparison-claim}; combining it with
\eqref{e:AS-model-lower} proves
\eqref{e:AS-uniform-counting-conclusion}.

Finally, fix $\gamma>0$.  Since $\mathfrak m$ is increasing, it suffices to
consider the lower endpoints in \eqref{e:AS-simplified-density}.  We choose
$c_\gamma>0$ sufficiently small that in all three cases, for $w$
sufficiently large,
\begin{align*}
\log(2/\mathfrak m(\delta))
\le(\log w)^{1/(2C^\sharp)}
\qquad\text{and}\qquad
\mathfrak m(\delta)>w^{-\gamma}.
\end{align*}
The first inequality implies \eqref{e:AS-counting-condition-w}.  Since
$W\le\exp(O_{P,d}(w))$, choosing $C_\gamma$ sufficiently large makes
\eqref{e:AS-simplified-size} imply both
\eqref{e:AS-counting-condition-X} and
\eqref{e:AS-counting-condition-model}.  This proves
\eqref{e:AS-simplified-conclusion}.
\end{proof}

\subsection{Prime comparison on smooth progressions}
\label{ss:common-prime-comparison}

We next establish the Gowers norm estimates needed to simplify the Siegel model
on smooth arithmetic progressions. We begin with the Cram\'er estimate, which
is independent of the dichotomy below and also settles the case in which no
Siegel zero exists.

When a Siegel zero exists, we distinguish between the \emph{non-oscillatory}
and \emph{oscillatory} cases, according to whether the exceptional character is
constant or retains oscillation on the progression. Lemma~\ref{l:powered-cramer}
contains the Cram\'er estimate and handles the non-oscillatory case.
Lemma~\ref{l:twisted-cramer} handles the oscillatory case by first reducing the
Siegel model to the Cram\'er model.
In Proposition~\ref{prop:wbound}, we then combine
Lemmas~\ref{l:powered-cramer} and~\ref{l:twisted-cramer} to obtain a prime
comparison for polynomial configurations.

\begin{lemma}[Gowers norm bounds for the Cram\'er model and the non-oscillatory Siegel model]
\label{l:powered-cramer}
\label{l:powered-absorbed-siegel}\label{l:absorbed-siegel}
Let $s\ge2$ and $d\in\mathbb N$ be fixed, and let $X\ge3$. Set
$$
\widetilde Q\coloneqq
\left\lceil\exp\bigl((\log X)^{c_0}\bigr)\right\rceil,
$$
so $\widetilde{Q}$ is the cutoff from Definition~\ref{d:Siegel} at scale $X$.
There exist $c=c(s,d)\in(0,1/10]$ and
$\eta=\eta(s,d)>0$ with the following property. Let $w\in\mathbb N$
satisfy
$$
2\le w\le(\log\widetilde Q)^c.
$$
Let $Q\in\mathbb N$ and $b\in\mathbb Z$ satisfy
$$
8\prod_{p\le w}p\mid Q,
\qquad P^+(Q)\le w,
\qquad \log Q\le w^2,
\qquad 1\le b\le Q,
\qquad (b,Q)=1.
$$
Suppose that $M\in\mathbb N$ satisfies
$$
X^{1/d}\exp\bigl(-d\sqrt{\log X}\bigr)
\le M\le X^{1/d}.
$$
Then the following hold.
\begin{enumerate}[label=\textup{(\roman*)}]
\item For every interval $J\subseteq[M]$,
\begin{equation}\label{e:powered-cramer}
\left\|
\1_J
\left(
\frac{\phi(Q)}{Q}\Lambda_{\Cramer,\widetilde Q}(Q\cdot+b)-1
\right)
\right\|_{U^s[M]}
\ll_{s,d}w^{-\eta}.
\end{equation}
\item Suppose that a Siegel zero $\beta$, with character
$\chi_{\Siegel}$ and conductor $q_{\Siegel}$, exists at scale $X$, and
that $q_{\Siegel}\mid Q$. Define
$$
\rho_{Q,b,M}
\coloneqq
1-\chi_{\Siegel}(b)(QM+b)^{\beta-1}.
$$
Then
\begin{equation}\label{e:powered-absorbed-siegel}
\left\|
\frac{\phi(Q)}{Q}
\Lambda_{\Siegel,\widetilde Q}(Q\cdot+b)
-\rho_{Q,b,M}
\right\|_{U^s[M]}
\ll_{s,d}\rho_{Q,b,M}w^{-\eta}.
\end{equation}
\end{enumerate}
\end{lemma}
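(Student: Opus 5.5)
Part (i) will follow from Proposition~\ref{p:saturated-cramer-gowers}, applied with $z=\widetilde Q$. We need to check its hypotheses. Since $w\le(\log\widetilde Q)^c$ and $\log\widetilde Q\asymp(\log X)^{c_0}$, we have $\log Q\le w^2\le(\log X)^{2cc_0}$. As $M\ge X^{1/d}\exp(-d\sqrt{\log X})$, we also have $\log M\gg_d\log X$. Hence $Q\le\exp((\log M)^{2/5})$ once $c$ is small, and $\widetilde Q+1\le\exp((\log M)^{1/10})$ because $c_0<1/11$ (the case of bounded $X$ only affects the implied constant). The divisibility conditions $\prod_{p\le w}p\mid Q$, $P^+(Q)\le w$ and $(b,Q)=1$ are assumed. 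So the proposition gives \eqref{e:powered-cramer} with $\eta=2^{-s}$.

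For part (ii), we use $q_{\Siegel}\mid Q$. Then $\chi_{\Siegel}(Qn+b)=\chi_{\Siegel}(b)$ is constant on the progression, and on the support of the Cram\'er factor the Siegel model becomes
\[
\frac{\phi(Q)}{Q}\Lambda_{\Siegel,\widetilde Q}(Qn+b)=\nu(n)\bigl(1-\chi_{\Siegel}(b)(Qn+b)^{\beta-1}\bigr),
\qquad \nu(n)\coloneqq\frac{\phi(Q)}{Q}\Lambda_{\Cramer,\widetilde Q}(Qn+b).
\]
Write $g(n)\coloneqq 1-\chi_{\Siegel}(b)(Qn+b)^{\beta-1}$, so that $g(M)=\rho_{Q,b,M}$, and split
\[
\nu g-\rho_{Q,b,M}=(\nu-1)\,g+(g-\rho_{Q,b,M}).
\]
If $\chi_{\Siegel}(b)=-1$, then $g\in[1,2]$ and $\rho_{Q,b,M}\in[1,2]$, so this case is easy. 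The substantive case is $\chi_{\Siegel}(b)=1$. There $\rho_{Q,b,M}=1-(QM+b)^{\beta-1}$ may be as small as $\asymp(1-\beta)\log(QM)$, and we need the error to be relatively small compared with it.

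\emph{First term.} On $[M]$ the function $g$ is monotone and smooth. We decompose it dyadically, or by partial summation, as $g(n)=g(M)-\int_n^M g'(t)\,\mathrm dt$, that is, as a combination of $\1_{[1,t]}(n)$ with weights $|g'(t)|\,\mathrm dt$. Then $(\nu-1)g$ is an average of functions $(\nu-1)\1_J$ with total weight $\ll g(M)+\int_1^M|g'|$. By monotonicity, $\int_1^M|g'|=|g(M)-g(1)|\le\rho_{Q,b,M}$, since $g(1)\ge0$ when $\chi_{\Siegel}(b)=1$. The triangle inequality and part (i) then bound this term by $\ll\rho_{Q,b,M}w^{-\eta}$.

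\emph{Second term.} The difference $g(n)-g(M)=(QM+b)^{\beta-1}-(Qn+b)^{\beta-1}$ is at most $(1-\beta)\log\bigl((QM+b)/(Qn+b)\bigr)$. Its size relative to $\rho_{Q,b,M}$ is controlled as follows. Either $(1-\beta)\log(QM)\ge1$, in which case $\rho_{Q,b,M}\gg1$ and everything is trivial. Otherwise $\rho_{Q,b,M}\asymp(1-\beta)\log(QM)$, and the difference is $\ll\rho_{Q,b,M}\log(M/n)/\log(QM)$. Since $\log(M/n)\ll\log(QM)$ only fails to be small for $n\le M^{1-\epsilon}$, we split at $n=M\exp(-\sqrt{\log M})$, say. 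On the large-$n$ range the difference is $\ll\rho_{Q,b,M}(\log M)^{-1/2}$ pointwise. On the small-$n$ range, of relative measure $\exp(-\sqrt{\log M})$, the difference is bounded pointwise by $\rho_{Q,b,M}\cdot O(1)$, so its normalized Gowers norm is $\ll\rho_{Q,b,M}\exp(-c\sqrt{\log M})$, using that $\|F\|_{U^s[M]}$ is at most the $L^{2^s}$-type average bound (or simply the $L^\infty$ norm times a density factor). Both contributions are $\ll\rho_{Q,b,M}w^{-\eta}$, because $w\le(\log X)^{O(c)}$.

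The main obstacle is the relative (multiplicative in $\rho_{Q,b,M}$) error in part (ii). It requires the partial summation with monotone weights and the careful comparison of $g(n)$ with $g(M)$ when $\beta$ is extremely close to $1$. Beyond that, we only need to check that no loss depends on $Q$, which holds because $\log Q\le w^2$ is negligible against $\log M$.
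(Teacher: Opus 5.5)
Your part (i) is the paper's argument. Your part (ii) is correct in outline but organized differently from the paper.

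\textbf{How the two routes differ.} The paper localizes to $I_{B_0}=\{w^{-B_0}M<n\le M\}$.
\begin{itemize}
\item On $I_{B_0}$ it shows $1-\chi_{\Siegel}(b)(Qn+b)^{\beta-1}=\rho_{Q,b,M}(1+O(w^{-\eta_1}))$ pointwise. It then writes $\nu\rho-\rho_{Q,b,M}=\rho_{Q,b,M}(\nu-1)+\nu(\rho-\rho_{Q,b,M})$ and bounds the second piece by an $L^\infty$ norm times $\|\1_{I_{B_0}}\nu\|_{U^s[M]}$, which uses $\nu\ge0$.
\item On the short initial interval it uses $0\le\rho\le2\rho_{Q,b,M}$, positivity of $\nu$ again, part (i), and the smallness of $\|\1_{[M]\setminus I_{B_0}}\|_{U^s[M]}$, with $B_0$ chosen in terms of $s$.
\end{itemize}
You instead pair $\nu-1$ with the monotone weight $g$ and expand $g$ as a superposition of indicators $\1_{[1,t]}$ of total mass at most $2\rho_{Q,b,M}$. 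This is the device the paper uses in Lemma~\ref{l:twisted-cramer}. It requires part (i) uniformly over initial segments. In exchange, your remaining term $g-\rho_{Q,b,M}$ is deterministic and can be handled by sup norms alone, with no positivity argument and no parameter $B_0$. The cost of your split point $Me^{-\sqrt{\log M}}$, compared with the paper's $w^{-B_0}M$, is that the drift $\log((M+1)/n)$ on the long range can be as large as $\sqrt{\log M}$ rather than $O(\log w)$.

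\textbf{Where your write-up is incomplete.} That drift is exactly the problem. In the cases you call easy or trivial ($\chi_{\Siegel}(b)=-1$, or $\chi_{\Siegel}(b)=1$ with $(1-\beta)\log(QM)\ge1$), you still need $\sup_{Me^{-\sqrt{\log M}}<n\le M}|g(n)-g(M)|\ll w^{-\eta}$. The Lipschitz bound you use elsewhere only gives $(1-\beta)(\sqrt{\log M}+O(1))$. This is not small, because $1-\beta$ may be as large as $(1000\log\widetilde Q)^{-1}$ with $\log\widetilde Q\asymp(\log X)^{c_0}$ and $c_0<1/2$.

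\textbf{The fix.} Use a concavity bound. Set $u=\log(Qn+b)$, $U=\log(QM+b)$ and $F(v)=1-e^{-(1-\beta)v}$. Concavity with $F(0)=0$ gives $F(u)\ge(u/U)F(U)$, hence
\[
|g(n)-g(M)|=F(U)-F(u)\le F(U)\,\frac{U-u}{U}\le\min\{1,\rho_{Q,b,M}\}\,\frac{\log((M+1)/n)}{\log(QM+b)}.
\]
This holds uniformly in $\beta$ and in the sign of $\chi_{\Siegel}(b)$: $F(U)=\rho_{Q,b,M}$ when $\chi_{\Siegel}(b)=1$, and $F(U)\le1\le\rho_{Q,b,M}$ otherwise. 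It gives $\ll\rho_{Q,b,M}(\log M)^{-1/2}$ on your long range in all cases, and it makes your case split on $(1-\beta)\log(QM)$ unnecessary.

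Alternatively, treat $1-\beta>(\log M)^{-3/4}$ separately. There, $|g(n)-g(M)|\le(Qn+b)^{\beta-1}\le\exp(-\tfrac12(1-\beta)\log M)$, which is small.

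With this repair your argument is complete.
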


\begin{proof}
We first prove part~\textup{(i)}. We have $\log M\asymp_d\log X$, and
$$
    Q\le\exp(w^2)
    \le\exp\bigl((\log\widetilde Q)^{2c}\bigr)
    \le\exp\bigl((\log M)^{2/5}\bigr),
$$
where the last inequality follows from $c\le1/10$. Since $c_0<1/11$, we
also have
$$
    \widetilde Q+1\le\exp\bigl((\log M)^{1/10}\bigr)
$$
for $X$ sufficiently large. Proposition~\ref{p:saturated-cramer-gowers},
with $z=\widetilde Q$, now proves \eqref{e:powered-cramer}, after
decreasing $\eta$ so that $\eta\le2^{-s}$.

We then prove part~\textup{(ii)}. Since
$q_{\Siegel}\mid Q$, we have
$\chi_{\Siegel}(Qy+b)=\chi_{\Siegel}(b)$, so the exceptional character has
no oscillation in $y$. We then localize the slowly varying factor
$(Qy+b)^{\beta-1}$. Write
$$
\nu(n)\coloneqq
\frac{\phi(Q)}{Q}\Lambda_{\Cramer,\widetilde Q}(Qn+b)
$$
and
$$
\rho(n)\coloneqq
1-\chi_{\Siegel}(b)(Qn+b)^{\beta-1}.
$$
Since $q_{\Siegel}\mid Q$, we have
$\chi_{\Siegel}(Qn+b)=\chi_{\Siegel}(b)$, and hence
$$
\frac{\phi(Q)}{Q}\Lambda_{\Siegel,\widetilde Q}(Qn+b)
=\nu(n)\rho(n).
$$

Fix $B_0>0$, to be chosen sufficiently large in terms of $s$, and set
$$
I_{B_0}\coloneqq
\{n\in\mathbb N:w^{-B_0}M<n\le M\}.
$$

\noindent\textbf{Claim.} Uniformly for $n\in I_{B_0}$,
\begin{equation}\label{e:powered-rho-local}
\rho(n)=
\rho_{Q,b,M}\left(1+O_{s,d,B_0}(w^{-\eta_1})\right)
\end{equation}
for some $\eta_1>0$.

To prove the claim, set $\alpha=1-\beta$, $u=\log(Qn+b)$, and
$U=\log(QM+b)$. For $n\in I_{B_0}$,
\begin{align*}
0\le U-u
&=\log\left(\frac{QM+b}{Qn+b}\right)
\le\log\left(\frac{M+1}{n}\right)
\ll_{B_0}\log w,\\
&\text{and}\qquad u,U\asymp_d\log X.
\end{align*}
If $\chi_{\Siegel}(b)=1$, the function
$F(v)=1-e^{-\alpha v}$ is increasing and concave with $F(0)=0$.
Concavity gives $F(u)\ge (u/U)F(U)$, and hence
\begin{equation}\label{e:powered-rho-positive-error}
0\le\frac{F(U)-F(u)}{F(U)}\le\frac{U-u}{U}
\ll_{d,B_0}\frac{\log w}{\log X}.
\end{equation}
If $\chi_{\Siegel}(b)=-1$, the mean value theorem instead
gives
\begin{equation}\label{e:powered-rho-negative-error}
\frac{|(1+e^{-\alpha u})-(1+e^{-\alpha U})|}
{1+e^{-\alpha U}}
\le \alpha(U-u)
\ll_{B_0}\frac{\log w}{\log\widetilde Q},
\end{equation}
using the defining bound $\alpha\ll1/\log\widetilde Q$ for the exceptional
zero. Since $w\le(\log\widetilde Q)^c$, the bounds on the right of
\eqref{e:powered-rho-positive-error} and
\eqref{e:powered-rho-negative-error} are $O_{s,d,B_0}(w^{-\eta_1})$ for
some $\eta_1>0$. This proves the claim.

Part~\textup{(i)}, applied with $J=I_{B_0}$, gives
$$
\|\1_{I_{B_0}}(\nu-1)\|_{U^s[M]}
\ll_{s,d}w^{-\eta_2}
$$
for some $\eta_2>0$. It also gives
$\|\1_{I_{B_0}}\nu\|_{U^s[M]}\ll_{s,d}1$. On $I_{B_0}$ we have
\begin{align*}
\1_{I_{B_0}}(\nu\rho-\rho_{Q,b,M})
={}&\rho_{Q,b,M}\1_{I_{B_0}}(\nu-1)
 +\1_{I_{B_0}}\nu(\rho-\rho_{Q,b,M}).
\end{align*}
The triangle inequality, followed by \eqref{e:powered-rho-local} and the
cube definition of the Gowers norm and the nonnegativity of $\nu$, therefore
gives, for some $\eta_3>0$,
\begin{align}\begin{split}\label{e:1B0}
&\left\|
\1_{I_{B_0}}(\nu\rho-\rho_{Q,b,M})
\right\|_{U^s[M]}\\
&\qquad\le
\rho_{Q,b,M}\|\1_{I_{B_0}}(\nu-1)\|_{U^s[M]}
+\|\1_{I_{B_0}}\nu(\rho-\rho_{Q,b,M})\|_{U^s[M]}\\
&\qquad\le
\rho_{Q,b,M}\|\1_{I_{B_0}}(\nu-1)\|_{U^s[M]}
+\|\rho-\rho_{Q,b,M}\|_{L^\infty(I_{B_0})}
\|\1_{I_{B_0}}\nu\|_{U^s[M]}\\
&\qquad
\ll_{s,d,B_0}\rho_{Q,b,M}w^{-\eta_3}.
\end{split}
\end{align}

On $[M]\setminus I_{B_0}$, we have
\begin{equation}\label{e:powered-rho-complement}
0\le\rho(n)\le2\rho_{Q,b,M}.
\end{equation}
For $\chi_{\Siegel}(b)=1$, one may replace $2$ by $1$ in
\eqref{e:powered-rho-complement}; indeed, for $n\le M$ the function
$1-(Qn+b)^{\beta-1}$ is at most $\rho_{Q,b,M}$, and for
$\chi_{\Siegel}(b)=-1$ we use $\rho(n)\le2$ and $\rho_{Q,b,M}\ge1$.
Therefore
$$
\|\1_{[M]\setminus I_{B_0}}\nu\rho\|_{U^s[M]}
\le2\rho_{Q,b,M}
\|\1_{[M]\setminus I_{B_0}}\nu\|_{U^s[M]}.
$$
By part~\textup{(i)},
$$
\|\1_{[M]\setminus I_{B_0}}(\nu-1)\|_{U^s[M]}
\ll_{s,d}w^{-\eta_2}.
$$
Also,
$$
\|\1_{[M]\setminus I_{B_0}}\|_{U^s[M]}
\ll_sw^{-B_0(s+1)/2^s}.
$$
It follows that
$$
\|\1_{[M]\setminus I_{B_0}}\nu\rho\|_{U^s[M]}
\ll_{s,d,B_0}
\rho_{Q,b,M}
\left(w^{-\eta_2}+w^{-B_0(s+1)/2^s}\right).
$$
Since
\begin{align*}
\1_{[M]\setminus I_{B_0}}(\nu\rho-\rho_{Q,b,M})
={}&\1_{[M]\setminus I_{B_0}}\nu\rho
-\rho_{Q,b,M}\1_{[M]\setminus I_{B_0}},
\end{align*}
another application of the triangle inequality gives
\begin{align}\label{e:1IB0C}
\left\|
\1_{[M]\setminus I_{B_0}}(\nu\rho-\rho_{Q,b,M})
\right\|_{U^s[M]}
\ll_{s,d,B_0}
\rho_{Q,b,M}
\left(w^{-\eta_2}+w^{-B_0(s+1)/2^s}\right).
\end{align}
Finally,
\begin{align*}
\nu\rho-\rho_{Q,b,M}
={}&\1_{I_{B_0}}(\nu\rho-\rho_{Q,b,M})\\
&+\1_{[M]\setminus I_{B_0}}(\nu\rho-\rho_{Q,b,M}).
\end{align*}
Hence, the triangle inequality,~\eqref{e:1B0} and~\eqref{e:1IB0C} give
\eqref{e:powered-absorbed-siegel} after fixing $B_0=B_0(s)$ and decreasing
$\eta>0$.
\end{proof}

We next treat the oscillatory case.

\begin{lemma}[Gowers norm bound in the oscillatory case]\label{l:twisted-cramer}
Let $s,d\in\mathbb N$ be fixed, and let $N$ be sufficiently large.
Let $\widetilde Q,\beta,\chi_{\Siegel},q_{\Siegel}$ be as in
Definition~\ref{d:Siegel}, and suppose that a Siegel zero exists.  Let
$z,w\in\mathbb N$ satisfy $2\le z\le w\le \widetilde Q$, and let
$Q,b\in\mathbb N$ satisfy
$$
    8\prod_{p\le z}p\mid Q,
    \qquad
    P^+(Q)\le w,
    \qquad
    (b,Q)=1.
$$
Assume moreover that $Qq_{\Siegel}\le \exp\bigl((\log N)^{1/10}\bigr)$.
Let $M$ be an integer satisfying
\begin{equation}\label{eq:twisted-flexible-scale}
    N^{1/d}\exp\bigl(-d\sqrt{\log N}\bigr)
    \le M\le N^{1/d}.
\end{equation}
If $q_{\Siegel}\nmid Q$, then there exists $t=t(s,d)>0$ such that
\begin{align}\label{eq:twisted-needed0}
    \left\|
    \frac{\phi(Q)}{Q}
    \Lambda_{\Cramer,w}(Q\cdot+b)
    \chi_{\Siegel}(Q\cdot+b)(Q\cdot+b)^{\beta-1}
    \right\|_{U^s[M]}
    \ll_{s,d} z^{-t}.
\end{align}
\end{lemma}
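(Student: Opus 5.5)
We bound the $2^s$-th power of the Gowers norm directly, as a correlation of $2^s$ twisted affine forms, and show that the local factor at one prime gives the saving.

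Write
$$\nu_\chi(n)\coloneqq\frac{\phi(Q)}{Q}\Lambda_{\Cramer,w}(Qn+b)\chi_{\Siegel}(Qn+b)$$
and $a(n)\coloneqq(Qn+b)^{\beta-1}$.

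\emph{Removing the slowly varying factor.} The function $a$ is positive, decreasing in $n$, and bounded by $1$. Writing $a(n)=a(M)+\int \1_{[1,t]}(n)\,\mathrm d(-a)(t)$ on $[M]$ expresses $\1_{[M]}a$ as an average of interval indicators with total mass at most $1$. By the triangle inequality for $\|\cdot\|_{U^s[M]}$, it then suffices to prove $\|\1_J\nu_\chi\|_{U^s[M]}\ll_{s,d}z^{-t}$ uniformly over intervals $J\subseteq[M]$.

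\emph{Locating a prime with oscillation.} Since $\chi_{\Siegel}$ is real and primitive, $q_{\Siegel}$ is an odd squarefree number times $1$, $4$ or $8$. Because $8\mid Q$ and $q_{\Siegel}\nmid Q$, there is an odd prime $p_0\mid q_{\Siegel}$ with $p_0\nmid Q$. Since every prime up to $z$ divides $Q$, we get $p_0>z$. Factor $\chi_{\Siegel}=\chi_{p_0}\chi'$, where $\chi_{p_0}$ is the Legendre symbol modulo $p_0$.

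\emph{Expanding the norm as a linear forms sum.} We expand $\|\1_J\nu_\chi\|_{U^s(\mathbb Z)}^{2^s}$ as a sum over $(x,\mathbf h)\in\Omega_J$ of $\prod_\omega\nu_\chi(x+\omega\cdot\mathbf h)$. Now we imitate the proof of Proposition~\ref{p:saturated-cramer-gowers}.

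The natural period of the summand is far larger than $M$, since it involves all primes up to $w$ and $w$ may be as large as $\widetilde Q$. So we cannot simply pass to $\mathbb Z/L\mathbb Z$. Instead we would use the twisted version of the linear forms estimate \cite[Proposition~5.2]{TT-JEMS}, or redo its fundamental-lemma sieve with the extra character $\chi_{\Siegel}$. The character has conductor $q_{\Siegel}\le\widetilde Q$ and is periodic modulo $Qq_{\Siegel}\le\exp((\log N)^{1/10})$, which is small enough for this. The hypothesis \eqref{eq:twisted-flexible-scale} gives $\log M\asymp_d\log N$, so the sieve error is $O(M^{s+1}\exp(-c(\log M)^{4/5}))$. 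The main term is $\operatorname{vol}(\Omega_J)$ times a product of local factors, now computed modulo $p$ for $p\nmid Qq_{\Siegel}$, and modulo the relevant prime powers otherwise.

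\emph{Bounding the local factors.}
\begin{itemize}
\item For $p\le w$ with $p\mid Q$, the forms are constant and the normalizations cancel, exactly as before.
\item For $p\nmid Qq_{\Siegel}$, the Cram\'er local factor is $1+O_s(p^{-2})$. The convergent product over these primes is $O_s(1)$.
\item For $p\mid q_{\Siegel}$ with $p\ne p_0$, the local factor has modulus at most $1$, up to harmless $p/(p-1)$ normalizations that are again controlled.
\item At $p_0$, the local factor is
$$\mathbb E_{x,\mathbf h\in\mathbb F_{p_0}}\prod_{\omega}\chi_{p_0}\bigl(Q(x+\omega\cdot\mathbf h)+b\bigr)$$
times $\1_{p_0\nmid\cdot}$ weights. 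For $\mathbf h$ with pairwise distinct values $\omega\cdot\mathbf h$, the polynomial $\prod_\omega(Q(x+\omega\cdot\mathbf h)+b)$ in $x$ is squarefree of degree $2^s$, hence not a square. The Weil bound then gives $O_s(p_0^{-1/2})$ for the $x$-average. Degenerate $\mathbf h$ form a proportion $O_s(p_0^{-1})$.
\end{itemize}
So the $p_0$ factor is $O_s(p_0^{-1/2})$, and the whole main term is $\ll_s M^{s+1}z^{-1/2}$. Dividing by $\|\1_{[M]}\|_{U^s}^{2^s}\asymp M^{s+1}$ and taking $2^s$-th roots yields \eqref{eq:twisted-needed0} with $t=2^{-s-1}$, say. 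The sieve error is negligible since $z\le\widetilde Q$.

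\emph{Main obstacle.} The hard step is the twisted linear forms estimate: running the fundamental lemma of the sieve with the primes up to $w$ while carrying the character $\chi_{\Siegel}(Q\cdot+b)$. The key is to check that the character is genuinely periodic with small period $Qq_{\Siegel}$, so that it factors into local terms and the sieve level can absorb it. If \cite[Proposition~5.2]{TT-JEMS} does not already allow a bounded periodic twist, I would first split $x$ and $\mathbf h$ into residue classes modulo $q_{\Siegel}$. On each class the character is constant. I would then apply the untwisted estimate to the sublattice forms, with coefficients $Qq_{\Siegel}(1,\omega)$, and finally sum the resulting explicit local densities against the character values. That final sum reproduces the Weil-bounded character sum at $p_0$.
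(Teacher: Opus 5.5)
Your proposal is correct. Through the fallback in your final paragraph, it is essentially the paper's argument: the interval reduction, splitting $(x,\mathbf h)$ into classes modulo $q_{\Siegel}$, applying the untwisted \cite[Proposition~5.2]{TT-JEMS} to the forms with coefficients $Qq_{\Siegel}(1,\omega)$, and factoring the complete character sum by the Chinese remainder theorem, with the odd prime $p_0>z$ supplying the saving. In that route the linear-forms error is multiplied by the $q_{\Siegel}^{s+1}$ classes, which is harmless since $q_{\Siegel}\le\widetilde Q$ and $c_0<1/11$.

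The one genuine difference is at $p_0$. You apply the Weil bound directly and get $O_s(p_0^{-1/2})$, hence $t=2^{-s-1}$. The paper instead cites \cite[Lemma~5.6]{TT-JEMS} for $O_s(p^{-1/4})$ and obtains $t=1/(5\cdot 2^s)$.
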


\begin{remark}
Lemma~\ref{l:twisted-cramer} is stated with a general progression modulus $Q$,
rather than only with the $W$-trick modulus, because we use it here with $Q=W$
and later in Section~\ref{s:distinct} with an auxiliary modulus $Q_0$.
\end{remark}

\begin{proof}
\emph{Reduction to a uniform cube estimate.}\par
Set
$$
    F(n)\coloneqq
    \frac{\phi(Q)}{Q}
    \Lambda_{\Cramer,w}(Qn+b)
    \chi_{\Siegel}(Qn+b).
$$
For $n\in[M]$, the fundamental theorem of calculus gives
$$
    (Qn+b)^{\beta-1}
    =
    (QM+b)^{\beta-1}
    +
    \int_1^M
    (1-\beta)Q(Qu+b)^{\beta-2}\1_{[u]}(n)\,\mathrm du.
$$
Approximating the integral by Riemann sums and applying the triangle inequality
for the normalized $U^s[M]$-norm (or seminorm when $s=1$) to each sum gives
\begin{align*}
&\left\|F(\cdot)(Q\cdot+b)^{\beta-1}\right\|_{U^s[M]}\\
&\qquad\le
    (QM+b)^{\beta-1}\|F\|_{U^s[M]}
    +
    \int_1^M
    (1-\beta)Q(Qu+b)^{\beta-2}
    \|\1_{[u]}F\|_{U^s[M]}\,\mathrm du.
\end{align*}
The nonnegative scalar coefficients have total mass
\begin{align*}
    (QM+b)^{\beta-1}
    +
    \int_1^M(1-\beta)Q(Qu+b)^{\beta-2}\,\mathrm du
    =
    (Q+b)^{\beta-1}
    \le1.
\end{align*}
It is therefore enough to prove, uniformly for $1\le T\le M$, that
$$
    \|\1_{[T]}F\|_{U^s[M]}
    \ll_s z^{-1/(5\cdot2^s)}.
$$

Let
$$
    \Omega_T
    =
    \left\{
    (x,\mathbf y)\in\mathbb R^{s+1}:
    0<x+\omega\cdot\mathbf y\le T
    \text{ for every }\omega\in\{0,1\}^s
    \right\}.
$$
For $\omega\in\{0,1\}^s$, write
$$
    \psi_\omega(n,\mathbf h)
    \coloneqq Q(n+\omega\cdot\mathbf h)+b.
$$
Since $F$ is real-valued, expanding the Gowers norm and using
$
\|\1_{[M]}\|_{U^s(\mathbb Z)}^{2^s}\asymp_s M^{s+1}
$
shows that it is enough to prove
\begin{align}\label{eq:twisted-cube-sum}
&\left|
\sum_{(n,\mathbf h)\in\Omega_T\cap\mathbb Z^{s+1}}
\prod_{\omega\in\{0,1\}^s}
\frac{\phi(Q)}{Q}
\Lambda_{\Cramer,w}\bigl(\psi_\omega(n,\mathbf h)\bigr)
\chi_{\Siegel}\bigl(\psi_\omega(n,\mathbf h)\bigr)
\right|
\ll_s M^{s+1}z^{-1/5}.
\end{align}

\emph{The Cram\'er-model correlation on residue classes.}\par
By \eqref{eq:twisted-flexible-scale}, for $N$ sufficiently large,
$$
    q_{\Siegel}\le\widetilde Q
    \ll\exp\bigl((\log N)^{c_0}\bigr)
    \le M^{1/100}.
$$
Split the variables into residue classes modulo $q_{\Siegel}$, and write
$$
    \psi_{\omega,a,\mathbf c}(n',\mathbf h')
    \coloneqq
    Q(q_{\Siegel}n'+a+
    \omega\cdot(q_{\Siegel}\mathbf h'+\mathbf c))+b.
$$
Set
\begin{align*}
G_T(a,\mathbf c)
\coloneqq
\sum_{\substack{
(n',\mathbf h')\in q_{\Siegel}^{-1}(\Omega_T-(a,\mathbf c))\\
(n',\mathbf h')\in\mathbb Z^{s+1}}}
\prod_{\omega\in\{0,1\}^s}
\frac{\phi(Q)}{Q}
\Lambda_{\Cramer,w}
\bigl(\psi_{\omega,a,\mathbf c}(n',\mathbf h')\bigr).
\end{align*}
Then the sum in \eqref{eq:twisted-cube-sum} becomes
\begin{equation}\label{eq:chisiegel}
    \sum_{(a,\mathbf c)\in[q_{\Siegel}]^{s+1}}
    \prod_{\omega\in\{0,1\}^s}
    \chi_{\Siegel}\bigl(Q(a+\omega\cdot\mathbf c)+b\bigr)
    G_T(a,\mathbf c).
\end{equation}

For fixed $(a,\mathbf c)$, the region
$$
    \mathcal R_T(a,\mathbf c)
    \coloneqq q_{\Siegel}^{-1}(\Omega_T-(a,\mathbf c))
$$
is convex and is contained in a cube of side length
$O_s(M/q_{\Siegel}+1)$.  Set
$$
    R\coloneqq C_s(M/q_{\Siegel}+1)
$$
with $C_s$ sufficiently large.

The affine forms $\psi_{\omega,a,\mathbf c}$ have linear coefficient vectors
$$
    Qq_{\Siegel}(1,\omega),
    \qquad \omega\in\{0,1\}^s,
$$
which are pairwise linearly independent and have magnitude
$O_s(Qq_{\Siegel})$. By \eqref{eq:twisted-flexible-scale} and the bound
$q_{\Siegel}\le M^{1/100}$ above, we have $\log R\asymp_d\log N$.
The hypothesis on $Qq_{\Siegel}$ therefore gives
$$
    Qq_{\Siegel}
    \le
    \exp\bigl((\log N)^{1/10}\bigr)
    \le
    \exp\bigl((\log R)^{2/5}\bigr)
$$
for $N$ sufficiently large. Since $c_0<1/11$, we also have, for
$N$ sufficiently large,
$$
    w+1
    \le
    \exp\bigl((\log R)^{1/10}\bigr).
$$
Since
$$
    \operatorname{vol}(\mathcal R_T(a,\mathbf c))
    =
    q_{\Siegel}^{-(s+1)}\operatorname{vol}(\Omega_T),
$$
\cite[Proposition~5.2]{TT-JEMS} gives
\begin{equation}\label{eq:Gac2}
G_T(a,\mathbf c)
=
q_{\Siegel}^{-(s+1)}\operatorname{vol}(\Omega_T)
\prod_{p\le w}\beta_p
+
O_{s,d}\!\left(
M^{s+1}\exp\bigl(-(\log N)^{1/11}\bigr)
\right).
\end{equation}

Here the local factors $\beta_p$ in \eqref{eq:Gac2} include the $p$-parts of
the normalizing factor $(\phi(Q)/Q)^{2^s}$. Since $P^+(Q)\le w$, every prime
dividing $Q$ occurs in the product over $p\le w$. Due to the support of
$\chi_{\Siegel}$ in
\eqref{eq:chisiegel}, only tuples $(a,\mathbf c)$ satisfying
\begin{equation}\label{eq:twisted-coprimality}
    \bigl(Q(a+\omega\cdot\mathbf c)+b,q_{\Siegel}\bigr)=1
    \qquad\text{for every }\omega\in\{0,1\}^s
\end{equation}
contribute. For such tuples, the local factors are independent of
$(a,\mathbf c)$. More precisely, for primes $p\le w$,
$$
\beta_p=
\begin{cases}
   1, & p\mid Q,\\[4pt]
   \left(\dfrac{p}{p-1}\right)^{2^s},
      & p\mid q_{\Siegel}\ \text{and}\ p\nmid Q,\\[8pt]
   1+O_s(p^{-2}), & p\nmid Qq_{\Siegel}.
\end{cases}
$$
If $p\mid Q$, then every form is congruent to
$b\not\equiv0\pmod p$. If $p\mid q_{\Siegel}$ and $p\nmid Q$,
\eqref{eq:twisted-coprimality} excludes the zero class for every form. Finally,
if $p\nmid Qq_{\Siegel}$, an invertible affine change of variables
modulo $p$ gives the factor $1+O_s(p^{-2})$.
Thus
$$
    \prod_{p\le w}\beta_p
    =
    Y
    \prod_{\substack{p\le w\\ p\mid q_{\Siegel}\\ p\nmid Q}}
    \left(\frac{p}{p-1}\right)^{2^s},
$$
where
$$
    Y=
    \prod_{\substack{p\le w\\ p\nmid Qq_{\Siegel}}}
    \left(1+O_s(p^{-2})\right)
$$
satisfies $Y\ll_s1$, by the convergence of $\sum_p p^{-2}$.
Consequently, \eqref{eq:Gac2} becomes
\begin{align}\label{eq:Gac3}
    G_T(a,\mathbf c)
=
q_{\Siegel}^{-(s+1)}
\operatorname{vol}(\Omega_T)Y
\prod_{\substack{p\le w\\ p\mid q_{\Siegel}\\ p\nmid Q}}
    \left(\frac{p}{p-1}\right)^{2^s}
+
O_{s,d}\!\left(
M^{s+1}\exp\bigl(-(\log N)^{1/11}\bigr)
\right).
\end{align}

The product over primes $p\le w$ with $p\mid q_{\Siegel}$ and
$p\nmid Q$ will be kept together with the complete character sum.  Set
$$
    \mathcal C(a,\mathbf c)
    \coloneqq
    \prod_{\omega\in\{0,1\}^s}
    \chi_{\Siegel}\bigl(Q(a+\omega\cdot\mathbf c)+b\bigr).
$$
Using \eqref{eq:Gac3}, and noting that only tuples with
$\mathcal C(a,\mathbf c)\neq0$ contribute, the left-hand side of
\eqref{eq:chisiegel} is
\begin{align*}
\sum_{a,\mathbf c\bmod q_{\Siegel}}
\mathcal C(a,\mathbf c)G_T(a,\mathbf c)
&=q_{\Siegel}^{-(s+1)}\operatorname{vol}(\Omega_T)Y
\prod_{\substack{p\le w\\ p\mid q_{\Siegel}\\ p\nmid Q}}
\left(\frac{p}{p-1}\right)^{2^s}
\sum_{a,\mathbf c\bmod q_{\Siegel}}
\mathcal C(a,\mathbf c)\\
&\quad+
O_{s,d}\!\left(
M^{s+1}q_{\Siegel}^{s+1}
\exp\bigl(-(\log N)^{1/11}\bigr)
\right).
\end{align*}
Since $q_{\Siegel}^{s+1}
    \le
    \widetilde Q^{s+1}
    \le
    \exp\bigl(O_s((\log N)^{c_0})\bigr)$
and $c_0<1/11$, the resulting error is
$$
    \ll_{s,d}
    M^{s+1}
    \exp\bigl(-\tfrac{1}{2}(\log N)^{1/11}\bigr),
$$
say, and is negligible compared with $M^{s+1}z^{-1/5}$. Since
$\operatorname{vol}(\Omega_T)\ll_sM^{s+1}$ and $Y\ll_s1$, we have
$\operatorname{vol}(\Omega_T)Y\ll_sM^{s+1}$.
It remains to prove
\begin{align}\label{eq:chisiegelsum}
\prod_{\substack{p\le w\\ p\mid q_{\Siegel}\\ p\nmid Q}}
\left(\frac{p}{p-1}\right)^{2^s}
\left|\sum_{(a,\mathbf c)\in[q_{\Siegel}]^{s+1}}
\mathcal C(a,\mathbf c)
\right|
\ll_s q_{\Siegel}^{s+1}z^{-1/5}.
\end{align}
\emph{The complete character sum.}\par\nopagebreak
Since $\chi_{\Siegel}$ is a primitive non-principal real character,
$q_{\Siegel}$ is the absolute value of a fundamental discriminant. Thus
$$
    q_{\Siegel}=2^a q_0,
    \qquad
    a\in\{0,2,3\},
$$
where $q_0$ is odd and square-free, and we may write
$$
    \chi_{\Siegel}
    =
    \chi_{2^a}
    \prod_{\substack{p\mid q_{\Siegel}\\ p\ \mathrm{odd}}}\chi_p,
$$
where each $\chi_p$ is the primitive quadratic character modulo $p$, and
$\chi_{2^a}$ is the $2$-adic factor. By the Chinese remainder theorem, the
complete character sum in \eqref{eq:chisiegelsum} factors into local complete
sums. For each odd prime $p\mid q_{\Siegel}$, define
$$
A_p
\coloneqq
\frac{1}{p^{s+1}}
\left|
\sum_{(a,\mathbf c)\in(\mathbb Z/p\mathbb Z)^{s+1}}
\prod_{\omega\in\{0,1\}^s}
\chi_p\bigl(Q(a+\omega\cdot\mathbf c)+b\bigr)
\right|.
$$
We claim that
$$
A_p=1\quad\text{if }p\mid Q,
\qquad
A_p\ll_s p^{-1/4}\quad\text{if }p\nmid Q.
$$
The $2$-adic local factor equals $1$. In fact, $8\mid Q$, so the $2$-adic
part of $q_{\Siegel}$ divides $Q$ and the local $2$-adic character is constant
on the forms $Q(a+\omega\cdot\mathbf c)+b$. Its $2^s$-fold product is therefore
$1$.

If $p\mid Q$, then $Q(a+\omega\cdot\mathbf c)+b\equiv b\pmod p$
for every $\omega$. Since $(b,Q)=1$, we have $p\nmid b$, and therefore
$A_p=1$.

If $p\nmid Q$, then the change of variables
$$
    a'=Qa+b,\qquad \mathbf c'=Q\mathbf c \pmod p
$$
is invertible. Hence \cite[Lemma~5.6]{TT-JEMS} gives $ A_p\ll_s p^{-1/4}.$
Moreover, since $\prod_{p\le z}p\mid Q$, every such prime satisfies $p>z$.
The sum
$$
\sum_{(a,\mathbf c)\in[q_{\Siegel}]^{s+1}}
\mathcal C(a,\mathbf c)
$$
factors over the prime divisors of $q_{\Siegel}$ by the Chinese remainder
theorem. After dividing this sum by $q_{\Siegel}^{s+1}$ and multiplying by the
prefactor
$$
\prod_{\substack{p\le w\\ p\mid q_{\Siegel}\\ p\nmid Q}}
\left(\frac{p}{p-1}\right)^{2^s},
$$
the normalized local factor attached to an odd prime $p\mid q_{\Siegel}$ in
\eqref{eq:chisiegelsum} is
$$
\begin{cases}
1, & p\mid Q,\\[4pt]
\left(\dfrac{p}{p-1}\right)^{2^s} A_p,
    & p\nmid Q\ \text{and}\ p\le w,\\[10pt]
A_p, & p\nmid Q\ \text{and}\ p>w.
\end{cases}
$$
Since $(p/(p-1))^{2^s}\le 2^{2^s}$, every local factor with $p\nmid Q$
is $O_s(p^{-1/4})$.

Since $q_{\Siegel}\nmid Q$, there exists an odd prime
$p_0\mid q_{\Siegel}$ with $p_0\nmid Q$. Hence $p_0>z$. If $z$ is below a
fixed threshold depending only on $s$, the desired bound is absorbed into the
implicit constant. Otherwise, every local factor with $p\nmid Q$ is at most
$1$, while at the prime $p_0$ we keep the saving $O_s(p_0^{-1/4})$.
Therefore
$$
\prod_{\substack{p\le w\\ p\mid q_{\Siegel}\\ p\nmid Q}}
\left(\frac{p}{p-1}\right)^{2^s}
\frac{1}{q_{\Siegel}^{s+1}}
\left|
\sum_{(a,\mathbf c)\in[q_{\Siegel}]^{s+1}}
\mathcal C(a,\mathbf c)
\right|
\ll_s p_0^{-1/4}
\ll_s z^{-1/4}.
$$
This proves \eqref{eq:chisiegelsum}, with $z^{-1/5}$ in place of
$z^{-1/4}$, and hence proves \eqref{eq:twisted-cube-sum} uniformly for
$1\le T\le M$. Dividing by
$\|\1_{[M]}\|_{U^s(\mathbb Z)}^{2^s}\asymp_s M^{s+1}$ and taking
$2^s$-th roots gives the required uniform bound for
$\|\1_{[T]}F\|_{U^s[M]}$. The triangle inequality and the coefficient
mass calculation above now give \eqref{eq:twisted-needed0}, with
$t=1/(5\cdot2^s)$.
\end{proof}

The following proposition combines the Gowers norm estimates in
Lemmas~\ref{l:powered-cramer} and~\ref{l:twisted-cramer} with the generalized
von Neumann estimate in Proposition~\ref{l:fixed-scale-gvn} to establish a
prime comparison for polynomial configurations with prime parameter in the
progression $Q\mathbb Z+b$.

\begin{proposition}[Prime-weighted counting operator comparison on a smooth progression]
\label{prop:wbound}
Let $d,k\in\mathbb N$. There are constants $c'=c'(d,k)>0$ and
$\eta'=\eta'(d,k)>0$, and a threshold $w_0=w_0(d,k)\ge2$, with the following
property. Let $X\ge3$, set
$$
\widetilde Q\coloneqq
\left\lceil\exp\bigl((\log X)^{c_0}\bigr)\right\rceil,
$$
and let $w\in\mathbb N$ satisfy
$$
w_0\le w\le(\log\widetilde Q)^{c'}.
$$
Suppose that $Q\in\mathbb N$ and $b\in\mathbb Z$ satisfy
\begin{equation}\label{e:powered-modulus-hypotheses}
8\prod_{p\le w}p\mid Q,
\qquad P^+(Q)\le w,
\qquad \log Q\le w^2,
\qquad 1\le b\le Q,
\qquad (b,Q)=1,
\end{equation}
and that $M\in\mathbb N$ satisfies
\begin{equation}\label{e:powered-scale-hypotheses}
X^{1/d}\exp\bigl(-d\sqrt{\log X}\bigr)
\le M\le X^{1/d}.
\end{equation}
Let $R_1,\ldots,R_k\in\mathbb Z[y]$ have degrees at most $d$, and let
$f_0,\ldots,f_k\colon\mathbb Z\to\mathbb C$ be $1$-bounded functions
supported on $[X]$.

If a Siegel zero exists at scale $X$, define
\begin{equation}\label{e:powered-rho}
\rho_{Q,b,M}
\coloneqq
1-\chi_{\Siegel}(b)(QM+b)^{\beta-1}
\1_{q_{\Siegel}\mid Q};
\end{equation}
if no Siegel zero exists, set $\rho_{Q,b,M}=1$. Then
\begin{align}
&\left|
\mathbb E_{x\le X}\mathbb E_{y\le M}
\frac{\phi(Q)}{Q}\Lambda(Qy+b)
f_0(x)\prod_{i=1}^{k}f_i(x+R_i(y))
\right.\notag\\
&\qquad\left.
-\rho_{Q,b,M}
\mathbb E_{x\le X}\mathbb E_{y\le M}
f_0(x)\prod_{i=1}^{k}f_i(x+R_i(y))
\right|
\ll_{d,k}
\rho_{Q,b,M}w^{-\eta'}.
\label{e:powered-polynomial-comparison}
\end{align}
\end{proposition}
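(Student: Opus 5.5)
We reduce everything to Gowers norm bounds for the weight along $y\mapsto Qy+b$, via Proposition~\ref{l:fixed-scale-gvn} applied with spatial scale $X$, shift scale $M$, $C=1$ and the polynomials $R_i$. By linearity in the weight, the difference in \eqref{e:powered-polynomial-comparison} is the counting form with weight $g(y)=\frac{\phi(Q)}{Q}\Lambda(Qy+b)-\rho_{Q,b,M}$ on $[M]$. Proposition~\ref{l:fixed-scale-gvn} then bounds it by $\ll_{d,k}\|g\|_{U^s[M]}$ for some $2\le s\ll_{d,k}1$. We split
\[
g=\tfrac{\phi(Q)}{Q}\bigl(\Lambda-\Lambda_{\Siegel,\widetilde Q}\bigr)(Q\cdot+b)+\Bigl(\tfrac{\phi(Q)}{Q}\Lambda_{\Siegel,\widetilde Q}(Q\cdot+b)-\rho_{Q,b,M}\Bigr)
\]
and bound each piece in $U^s[M]$.

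\emph{First piece.} Apply Lemma~\ref{le:dilate} with $N\approx QM$, giving
\[
\|(\Lambda-\Lambda_{\Siegel,\widetilde Q})(Q\cdot+b)\|_{U^s[M]}\ll_s Q^{(s+1)/2^s}\,\|\Lambda-\Lambda_{\Siegel,\widetilde Q}\|_{U^s[b,QM+b]}.
\]
To pass from $[b,QM+b]$ to initial intervals, write $\1_{[b,QM+b]}$ as a difference of two initial intervals, or absorb it via the triangle inequality and a comparison of normalisations, since $b\le Q$ is tiny. Then Lemma~\ref{l:prime-siegel-gowers} at scale $Y\asymp QM$ gives $\exp(-(\log\widetilde Q)^{c_s})$; its hypotheses hold because $\log Y\asymp_d\log X$ and $\widetilde Q=\exp((\log X)^{c_0})$ with $c_0<1/11$. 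The factors $Q^{O(1)}$ and $\phi(Q)/Q\le1$ cost only $\exp(O(w^2))$. Since $w\le(\log\widetilde Q)^{c'}$ with $c'$ small, the total is $\ll\exp(-\tfrac12(\log\widetilde Q)^{c_s})$, which is $\ll w^{-\eta'}\rho_{Q,b,M}$. For the last comparison we need a lower bound on $\rho_{Q,b,M}$ when $\chi_{\Siegel}(b)=1$: we have $\rho\ge 1-(QM)^{\beta-1}\gg(1-\beta)\log X$, and Siegel's bound gives $1-\beta\gg_\epsilon q_{\Siegel}^{-\epsilon}$, which is ineffective. Instead we use the effective bound $1-\beta\gg q_{\Siegel}^{-1/2}(\log q_{\Siegel})^{-2}\ge\widetilde Q^{-1}$, so $\rho\gg\widetilde Q^{-1}$. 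This is still not beaten by $\exp(-(\log\widetilde Q)^{c_s})$. Here lies the main obstacle: making the first error relative to $\rho$. I would try instead using the stronger form of \cite[Proposition~4.6]{MTW} at scale $Y$ with error $\exp(-(\log Y)^{c})$, i.e.\ replacing $\Lambda$ by the Siegel model with cutoff $\widetilde Q$ but with error polynomial-in-$\log X$ power saving; since $\widetilde Q=\exp((\log X)^{c_0})$ and $\rho\gg \widetilde Q^{-1}$, an error $\exp(-(\log X)^{c})$ with $c>c_0$ suffices. If only the stated lemma is available, one takes $c_0$ small compared to the exponent in MTW, as the overview indicates ($O(\exp(-(\log N)^{c_1}))$ versus $\rho\gg\exp(-O(w))$).

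\emph{Second piece, case analysis.} If no Siegel zero exists, or one exists but we are in the first two cases below, the relevant Siegel model terms are handled as follows.
\begin{itemize}
\item[(a)] No Siegel zero: $\Lambda_{\Siegel}=\Lambda_{\Cramer}$ and $\rho=1$, so Lemma~\ref{l:powered-cramer}(i) with $J=[M]$ gives $w^{-\eta}$.
\item[(b)] $q_{\Siegel}\mid Q$: Lemma~\ref{l:powered-cramer}(ii) gives exactly $\rho_{Q,b,M}w^{-\eta}$.
\item[(c)] $q_{\Siegel}\nmid Q$: here $\rho=1$. Write the model as $\frac{\phi(Q)}{Q}\Lambda_{\Cramer}(Q\cdot+b)-\frac{\phi(Q)}{Q}\Lambda_{\Cramer}\chi_{\Siegel}(\cdot)^{\beta-1}(Q\cdot+b)$. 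The first term minus $1$ is handled by Lemma~\ref{l:powered-cramer}(i). The second is handled by Lemma~\ref{l:twisted-cramer} with $z=w$ and cutoff $\widetilde Q$ in place of $w$ there. Its hypothesis $Qq_{\Siegel}\le\exp(w^2)\widetilde Q\le\exp((\log X)^{1/10})$ holds, and \eqref{e:powered-scale-hypotheses} matches \eqref{eq:twisted-flexible-scale} with $N=X$. This gives $w^{-t}$.
\end{itemize}
Collecting the bounds and taking $\eta'=\min(\eta,t)/2$, $w_0$ large, yields \eqref{e:powered-polynomial-comparison}.
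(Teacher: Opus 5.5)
Your reduction via Proposition~\ref{l:fixed-scale-gvn}, the splitting into the Siegel-model error plus the Siegel-model piece, the dilation step, and the three-case treatment of the Siegel-model piece all match the paper's proof. The three cases use Lemma~\ref{l:powered-cramer}(i), Lemma~\ref{l:powered-cramer}(ii), and Lemma~\ref{l:twisted-cramer} with $z=w$ and cutoff $\widetilde Q$.

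\textbf{The gap.} It is exactly where you flag it. The first piece $\frac{\phi(Q)}{Q}(\Lambda-\Lambda_{\Siegel,\widetilde Q})(Q\cdot+b)$ only gets a bound of the form $\exp(-\frac12(\log\widetilde Q)^{c_s})$ in $U^s[M]$, and you need this to be $\ll\rho_{Q,b,M}w^{-\eta'}$. Bounding $q_{\Siegel}\le\widetilde Q$ only gives $\rho_{Q,b,M}\gg\widetilde Q^{-1}$, which is far smaller than that error because $c_s\le1$. Neither of your workarounds repairs this.
\begin{itemize}
\item Shrinking $c_0$ changes nothing. Both the error and your lower bound depend only on $\log\widetilde Q$, and their ratio $\exp(\log\widetilde Q-\frac12(\log\widetilde Q)^{c_s})$ tends to infinity for every choice of $c_0$.
\item An approximation error $\exp(-(\log X)^{c})$ with $c>c_0$ at sieve cutoff $\widetilde Q$ is not supplied by Lemma~\ref{l:prime-siegel-gowers}. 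Nor should it be expected: the cutoff-$\widetilde Q$ model does not see the primes just above $\widetilde Q$, which by themselves carry Fourier mass of size a negative power of $\widetilde Q$.
\end{itemize}

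\textbf{The fix.} Bound $q_{\Siegel}$ by $Q$, not by $\widetilde Q$.
\begin{itemize}
\item By definition, $\rho_{Q,b,M}\neq1$ only when a Siegel zero exists and $q_{\Siegel}\mid Q$.
\item In that case, if $\chi_{\Siegel}(b)=-1$ then $\rho_{Q,b,M}\ge1$.
\item If $\chi_{\Siegel}(b)=1$ then $\rho_{Q,b,M}=1-(QM+b)^{\beta-1}\gg1-\beta$. The effective exceptional-zero bound (either $1-\beta\gg q_{\Siegel}^{-1}$ as in the paper, or your $q_{\Siegel}^{-1/2}(\log q_{\Siegel})^{-2}$), combined with $q_{\Siegel}\le Q\le e^{w^2}$, gives $\rho_{Q,b,M}\gg\exp(-w^2)\ge\exp(-(\log\widetilde Q)^{2c'})$.
\end{itemize}
Now choose $c'$ with $2c'<c_s$. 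This also absorbs the dilation loss $Q^{(s+1)/2^s}\le e^{O(w^2)}$, and gives $\exp(-\frac12(\log\widetilde Q)^{c_s})\ll\rho_{Q,b,M}w^{-\eta_1}$. This is what the paper does. It is also the actual content of the overview's remark $\rho\gg W^{-O(1)}$: that remark rests on $q_{\Siegel}\mid W$, not on the size of $c_0$.
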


\begin{proof}
\emph{Gowers norm reduction.}\par
Apply Proposition~\ref{l:fixed-scale-gvn} to the polynomials
$R_1,\ldots,R_k$. There is an integer $s\ll_{d,k}1$, which we may take to
be at least $2$ and which is independent of their coefficients, such that
\begin{align}
&\left|
\mathbb E_{x\le X}\mathbb E_{y\le M}
g(y)f_0(x)\prod_{i=1}^{k}f_i(x+R_i(y))
\right|
\ll_{d,k}\|g\|_{U^s[M]}.
\label{e:powered-gvn}
\end{align}
Define
\begin{align*}
\nu_{\Lambda}(y)&\coloneqq\frac{\phi(Q)}{Q}\Lambda(Qy+b),\\
\nu_{\Siegel}(y)&\coloneqq
\frac{\phi(Q)}{Q}\Lambda_{\Siegel,\widetilde Q}(Qy+b).
\end{align*}
By \eqref{e:powered-gvn}, it suffices to prove
\begin{equation}\label{e:powered-gowers-target}
\|\nu_{\Lambda}-\rho_{Q,b,M}\|_{U^s[M]}
\ll_{d,k}\rho_{Q,b,M}w^{-\eta'}.
\end{equation}
We use the decomposition
\begin{equation}\label{e:powered-gowers-decomposition}
\nu_{\Lambda}-\rho_{Q,b,M}
=(\nu_{\Lambda}-\nu_{\Siegel})
 +(\nu_{\Siegel}-\rho_{Q,b,M}).
\end{equation}
Choose $c'$ sufficiently small in terms of $d$ and $k$ to satisfy all the
restrictions below, with $c'\le1/10$ and with $c'$ at most the constant $c$
from Lemma~\ref{l:powered-cramer}; increase $w_0$ as needed.

\emph{Comparison with the Siegel model.}\par
Equations~\eqref{e:powered-modulus-hypotheses} and
\eqref{e:powered-scale-hypotheses} give
$\log(QM)\asymp_d\log X$. Let $c_s>0$ be supplied by
Lemma~\ref{l:prime-siegel-gowers}. Since $c_0<1/11$, after increasing $w_0$ we
have
\begin{equation}\label{e:powered-MTW-range}
\exp\bigl((\log\log(QM+b))^{1/c_s}\bigr)
\le\widetilde Q
\le\exp\bigl((\log(QM+b))^{1/10}\bigr).
\end{equation}
Lemma~\ref{l:prime-siegel-gowers}, applied with model scale $X$ and Gowers-norm
scale $Y=QM+b$, bounds $\Lambda-\Lambda_{\Siegel,\widetilde Q}$ on the
interval $[QM+b]$. To pass to $[b,QM+b]$, write
$$
\1_{[b,QM+b]}=\1_{[QM+b]}-\1_{[b-1]}.
$$
Equation~\eqref{e:prime-siegel-gowers}, the pointwise bound, valid for
$n\le QM+b$,
$$
|\Lambda(n)-\Lambda_{\Siegel,\widetilde Q}(n)|
\ll\log(QM+\widetilde Q+2),
$$
and the triangle inequality for the unnormalized Gowers norm therefore give
\begin{equation}\label{e:powered-MTW-interval}
\|\Lambda-\Lambda_{\Siegel,\widetilde Q}\|_{U^s[b,QM+b]}
\ll_s
\exp\bigl(-(\log\widetilde Q)^{\eta_0}\bigr)
+\log(QM+\widetilde Q+2)
\left(\frac{b}{QM}\right)^{(s+1)/2^s},
\end{equation}
for some $\eta_0=\eta_0(s)\in(0,1]$, where we used
$\|\1_I\|_{U^s(\mathbb Z)}\asymp_s|I|^{(s+1)/2^s}$.

We apply Lemma~\ref{le:dilate}. After increasing $w_0$, the hypotheses
$\log Q\le w^2$, $w\le(\log\widetilde Q)^{c'}$, and
\eqref{e:powered-scale-hypotheses} give
\begin{equation}\label{e:powered-Q-over-M}
\frac{Q}{M}\le X^{-1/(2d)}.
\end{equation}
Consequently, the term involving $b/(QM)$ in
\eqref{e:powered-MTW-interval}, including the
$Q^{(s+1)/2^s}$ factor from Lemma~\ref{le:dilate}, is
\begin{equation}\label{e:powered-dilated-endpoint-error}
Q^{(s+1)/2^s}\log(QM+\widetilde Q+2)
\left(\frac{b}{QM}\right)^{(s+1)/2^s}
\ll_{s,d}X^{-(s+1)/(2^{s+2}d)}.
\end{equation}
The same dilation factor is absorbed by the first term in
\eqref{e:powered-MTW-interval}: indeed,
$w^2\le(\log\widetilde Q)^{2c'}$, and we choose
$2c'<\eta_0$. Thus
\begin{equation}\label{e:powered-dilated-main-error}
Q^{(s+1)/2^s}
\exp\bigl(- (\log\widetilde Q)^{\eta_0}\bigr)
\le\exp\left(-\tfrac{1}{2}(\log\widetilde Q)^{\eta_0}\right).
\end{equation}
Equations~\eqref{e:powered-MTW-interval},
\eqref{e:powered-dilated-endpoint-error}, and
\eqref{e:powered-dilated-main-error} yield
\begin{equation}\label{e:powered-prime-siegel}
\|\nu_{\Lambda}-\nu_{\Siegel}\|_{U^s[M]}
\ll_s\exp\left(-\tfrac{1}{2}(\log\widetilde Q)^{\eta_0}\right).
\end{equation}

We next establish the lower bound
\begin{equation}\label{e:powered-rho-lower}
\rho_{Q,b,M}\gg\exp(-w^2),
\end{equation}
which is needed to express \eqref{e:powered-prime-siegel} as a relative
estimate. If $\rho_{Q,b,M}=1$, then \eqref{e:powered-rho-lower} holds.
Otherwise
$q_{\Siegel}\mid Q$. If $\chi_{\Siegel}(b)=-1$, then
$\rho_{Q,b,M}\ge1$, while if $\chi_{\Siegel}(b)=1$, then
the standard effective exceptional zero bound
\cite[Theorem~5.28(2)]{iwaniec-kowalski}, applied with $\varepsilon=1$, gives
$$
\rho_{Q,b,M}\gg1-\beta\gg q_{\Siegel}^{-1}
\ge Q^{-1}\ge\exp(-w^2).
$$
Equations~\eqref{e:powered-prime-siegel} and
\eqref{e:powered-rho-lower}, together with $2c'<\eta_0$, give
\begin{equation}\label{e:powered-prime-siegel-relative}
\|\nu_{\Lambda}-\nu_{\Siegel}\|_{U^s[M]}
\ll_{d,k}\rho_{Q,b,M}w^{-\eta_1}
\end{equation}
for some $\eta_1>0$.

\emph{Simplification of the Siegel model.}\par
It remains to prove that, for some $\eta_2>0$,
\begin{equation}\label{e:powered-siegel-simplified}
\|\nu_{\Siegel}-\rho_{Q,b,M}\|_{U^s[M]}
\ll_{d,k}\rho_{Q,b,M}w^{-\eta_2}.
\end{equation}
If no Siegel zero exists, then $\rho_{Q,b,M}=1$, and
Lemma~\ref{l:powered-cramer}\textup{(i)}, with $J=[M]$, proves
\eqref{e:powered-siegel-simplified}. Suppose next that a Siegel zero exists and
$q_{\Siegel}\nmid Q$. Again $\rho_{Q,b,M}=1$. We apply
Lemma~\ref{l:twisted-cramer} with its small prime parameter $z=w$, Cram\'er
cutoff $\widetilde Q$, and progression modulus $Q$. Its scale condition is
\eqref{e:powered-scale-hypotheses}, and
$$
Qq_{\Siegel}\le\exp(w^2)\widetilde Q
\le\exp\bigl((\log X)^{1/10}\bigr)
$$
by the choice of $c'$ and the inequality $c_0<1/11$.
Together with Lemma~\ref{l:powered-cramer}\textup{(i)}, this gives the
bound in \eqref{e:powered-siegel-simplified} in the oscillatory case. Finally, if
$q_{\Siegel}\mid Q$, then
Lemma~\ref{l:powered-cramer}\textup{(ii)} proves
\eqref{e:powered-siegel-simplified} directly.

Set $\eta'=\min(\eta_1,\eta_2)$. The triangle inequality,
\eqref{e:powered-gowers-decomposition},
\eqref{e:powered-prime-siegel-relative}, and
\eqref{e:powered-siegel-simplified} give
\eqref{e:powered-gowers-target}. Equation~\eqref{e:powered-gvn} now yields
\eqref{e:powered-polynomial-comparison}.
\end{proof}

\subsection{Proofs of the theorems}

\subsubsection{Multiples of a fixed polynomial}
\label{ss:proof-fixed-polynomial}

\begin{proof}[Proof of
Theorem~\ref{thm:fixed-polynomial-shifted-prime}]
Let $c'$ and $\eta'$ be supplied by Proposition~\ref{prop:wbound} for $d$
and $k-1$, and fix
$$
0<\gamma<\frac{1}{4}\eta'.
$$
Let $c_\gamma,C_\gamma$ be the constants in the final assertion of
Proposition~\ref{p:AS-uniform-counting}. Choose a fixed $\theta>0$ so small
that
\begin{equation}\label{e:fixed-polynomial-theta}
\theta<\frac{1}{3},
\qquad
\theta C_\gamma<\frac{1}{2},
\qquad
\theta<\frac{1}{2}c_0c',
\end{equation}
and set
\begin{equation}\label{e:fixed-polynomial-w-choice}
w=\left\lfloor(\log N)^\theta\right\rfloor.
\end{equation}
Suppose, for a contradiction, that $A$ has density
$\delta=|A|/N$ larger than the relevant bound in the theorem, with its
positive exponent $c$ chosen sufficiently small below. We may suppose that
$\delta\le1/2$, since otherwise we may replace $A$ by a subset of cardinality
$\lfloor N/2\rfloor$ and redefine $\delta$. Partition $[N]$ into
residue classes modulo $W^r$. For $1\le b\le W^r$, write
$$
I_b\coloneqq\{u\in\mathbb N:W^r(u-1)+b\in[N]\},
\qquad
A_b\coloneqq\{u\in I_b:W^r(u-1)+b\in A\}.
$$
Since
$$
\sum_{b=1}^{W^r}|A_b|=|A|
\qquad\text{and}\qquad
\sum_{b=1}^{W^r}|I_b|=N,
$$
there is some $b_0$ for which $|A_{b_0}|\ge\delta|I_{b_0}|$. Set
$X=|I_{b_0}|$ and $F=A_{b_0}$. Thus
\begin{equation}\label{e:fixed-polynomial-fibre-density}
F\subseteq[X],
\qquad |F|\ge\delta X,
\qquad X\asymp\frac{N}{W^r}.
\end{equation}

We have $W=\exp(O_{P,d}(w))=N^{o(1)}$. In particular,
\begin{equation}\label{e:fixed-polynomial-size-check}
X\ge\exp(w^{C_\gamma})
\end{equation}
for $N$ sufficiently large, by \eqref{e:fixed-polynomial-theta}. Moreover,
$$
\log w=\theta\log\log N+O(1),
\qquad
\log\log w=\log\log\log N+O_\theta(1).
$$
It follows that, after choosing the exponent $c$ in the theorem sufficiently
small in terms of $c_\gamma$, the density of $F$ satisfies
\eqref{e:AS-simplified-density}.

Applying Proposition~\ref{p:AS-uniform-counting} to $\1_F$, we now obtain
$$
2\int_{1/2}^1
\mathbb E_{x\le X}\mathbb E_{y\le M_z}
\prod_{i=0}^{k-1}
\1_F(x+a_iP_W(y))\,\mathrm dz
>w^{-\gamma}.
$$
All summands are nonnegative, so there is some $z\in[1/2,1]$ such that,
setting $M=M_z$,
\begin{equation}\label{e:fixed-polynomial-unweighted-count}
\mathbb E_{x\le X}\mathbb E_{y\le M}
\prod_{i=0}^{k-1}\1_F(x+a_iP_W(y))
>w^{-\gamma}.
\end{equation}
The definition of $M_z$ and \eqref{e:fixed-polynomial-fibre-density} give
\begin{equation}\label{e:fixed-polynomial-M-size}
M\asymp_P
X^{1/d}W^{-(d-r)/d}\asymp_P\frac{N^{1/d}}{W}.
\end{equation}
Consequently, for $N$ sufficiently large,
$$
X^{1/d}\exp\bigl(-d\sqrt{\log X}\bigr)
\le M\le X^{1/d}.
$$
The lower bound follows from
$\log W=O(w)=o(\sqrt{\log X})$. For the upper bound, if $d=r$, then
$B=|b_d|$; if $d>r$, then
$|b_d|W^{d-r}\ge W\ge B$. In either case, the definition of $M_z$ gives
$M^d\le X$. This verifies \eqref{e:powered-scale-hypotheses}.

We apply Proposition~\ref{prop:wbound} with $Q=W$, $b=1$,
$R_i=a_iP_W$ for $1\le i\le k-1$, and $f_i=\1_F$ for
$0\le i\le k-1$. Its modulus hypotheses
\eqref{e:powered-modulus-hypotheses} follow from
\eqref{e:AS-W-properties}: for sufficiently large $w$, every prime divisor
of $B$ is at most $w$ and
$$
\log W\ll_{P,d}w\le w^2.
$$
Its upper bound on $w$ follows from \eqref{e:fixed-polynomial-theta}, since
$\log\widetilde Q\asymp(\log X)^{c_0}$. By
\eqref{e:fixed-polynomial-unweighted-count} and
\eqref{e:powered-polynomial-comparison}, for sufficiently large $w$ we have
\begin{align}\begin{split}\label{e:1F}
&\frac{\phi(W)}{W}
\mathbb E_{x\le X}\mathbb E_{y\le M}
\Lambda(Wy+1)
\prod_{i=0}^{k-1}\1_F(x+a_iP_W(y))\\
&\qquad\ge
\rho_{W,1,M}
\mathbb E_{x\le X}\mathbb E_{y\le M}
\prod_{i=0}^{k-1}\1_F(x+a_iP_W(y))
-O_{d,k}\bigl(\rho_{W,1,M}w^{-\eta'}\bigr)\\
&\qquad>\rho_{W,1,M}w^{-2\gamma}.
\end{split}
\end{align}
If there is no Siegel zero or $q_{\Siegel}\nmid W$, then
$\rho_{W,1,M}=1$. Otherwise $q_{\Siegel}\mid W$. Since
$\chi_{\Siegel}(1)=1$, the effective exceptional zero bound
\cite[Theorem~5.28(2)]{iwaniec-kowalski}, applied with $\varepsilon=1$, gives
$$
\rho_{W,1,M}=1-(WM+1)^{\beta-1}
\gg1-\beta\gg q_{\Siegel}^{-1}\ge W^{-1}.
$$
Thus in every case
$\rho_{W,1,M}\gg W^{-1}=\exp(-O_{P,d}(w))$. Since $W/\phi(W)\ge1$, the
estimate~\eqref{e:1F} gives
\begin{align}
&\mathbb E_{x\le X}\mathbb E_{y\le M}
\Lambda(Wy+1)
\prod_{i=0}^{k-1}\1_F(x+a_iP_W(y))\notag\\
&\qquad>\rho_{W,1,M}w^{-2\gamma}
\gg\exp(-O_{P,d}(w))w^{-2\gamma}=N^{-o(1)}.
\label{e:fixed-polynomial-prime-power-count}
\end{align}

Since
$$
\sum_{\substack{n\le T\\n=p^j,\ j\ge2}}\Lambda(n)
\ll T^{1/2}\log(T+2),
$$
the contribution of higher prime powers to the
left-hand side of \eqref{e:fixed-polynomial-prime-power-count} is
$\ll_P WN^{-1/(2d)}\log N=N^{-1/(2d)+o(1)}$, which is negligible compared
with \eqref{e:fixed-polynomial-prime-power-count}. Hence some pair $(x,y)$ counted
on the left-hand side of \eqref{e:fixed-polynomial-prime-power-count} has
$$
p=Wy+1\in\mathbb P.
$$

Recall that $F=A_{b_0}$ and $P_W(y)=W^{-r}P(Wy)$. For the pair $(x,y)$ found above and for every $0\le i\le k-1$, we have the exact identity
\begin{align}
W^r\bigl(x+a_iP_W(y)-1\bigr)+b_0
&=W^r(x-1)+b_0+a_iP(Wy)\notag\\
&=W^r(x-1)+b_0+a_iP(p-1).
\label{e:fixed-polynomial-lift}
\end{align}
The left-hand side belongs to $A$ for every $i$, so
\eqref{e:fixed-polynomial-lift} is the forbidden configuration. Finally,
$W\to\infty$ with $N$, while $P$ has only finitely many positive integer
roots.  Thus, for $N$ sufficiently large, $W$ exceeds every positive integer
root of $P$; since $y\ge1$, this gives $P(Wy)=P(p-1)\ne0$. This
contradiction proves the theorem.
\end{proof}

\subsubsection{Linear progressions}

\begin{proof}[Proof of Corollary~\ref{thm_progression}]
Corollary~\ref{thm_progression} follows from
Theorem~\ref{thm:fixed-polynomial-shifted-prime} by taking $P(y)=y$ and
$a_i=i$ for $1\le i\le k-1$.
\end{proof}

\section{Distinct degree progressions} \label{s:distinct}

In this section, we prove Theorem~\ref{thm_poly}. The argument has four parts.
First, we show that if a set $A$ contains no configuration of the required
form, then there is a large prime-weighted discrepancy between the counting
operators involving $\1_A$ and its model $\delta\1_{[N]}$
(Lemma~\ref{lem:full-set-discrepancy}). Second, a chain
of Gowers norm approximations converts this into a large unweighted discrepancy
(Proposition~\ref{lem:chaining}). Third, we combine this discrepancy with the
weak regularity lemma to obtain a density increment on an arithmetic
progression (Lemma~\ref{l:case3}). Fourth, we iterate the density increment to
prove Theorem~\ref{thm_poly}.

We begin by showing the existence of the prime-weighted discrepancy. Recall the definition of the counting operator $T_{N,M,w,Q_0}$ from~\eqref{e:Q0avgmulti}.

\begin{lemma}[Prime-weighted discrepancy]\label{lem:full-set-discrepancy}
Let $N$ be sufficiently large, let $d,k\in\mathbb N$ with $k\ge2$, and let
$C\ge1$. Suppose
$\exp(- (\log N)^{1/100})<\delta\le1$, and let
$q\in\mathbb N$ satisfy $q\le w_*(N)$. Set
$M=\lfloor(N/q^{d-1})^{1/d}\rfloor$ and $P_0=0$, and let
$P_1,\ldots,P_{k-1}$ have $(C,q)$-coefficients and distinct
increasing degrees, with largest degree $d$. If $A\subseteq[N]$ with
$|A|=\delta N$
contains no configuration
\begin{align*}
&x,x+P_1(y),\ldots,x+P_{k-1}(y)\in A,
 \qquad qy+1\in\mathbb P,\\
&0,P_1(y),\ldots,P_{k-1}(y) \textup{ are pairwise distinct},
\end{align*}
then there is a constant $c_1=c_1(C,d,k)\in(0,1]$ such that
\begin{equation}\label{e:full-set-discrepancy}
\left|T_{N,M,\Lambda,q}(\1_A,\ldots,\1_A)
-T_{N,M,\Lambda,q}(\delta\1_{[N]},\ldots,\delta\1_{[N]})\right|
\ge c_1\delta^k\mathbb E_{y\le M}\Lambda(qy+1).
\end{equation}
\end{lemma}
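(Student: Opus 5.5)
The plan is to evaluate the two counting operators separately: the operator for $\1_A$ is negligible, while the operator for $\delta\1_{[N]}$ is of the expected size.

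\emph{The $\1_A$ term.} Here $T_{N,M,\Lambda,q}(\1_A,\ldots,\1_A)$ is $\frac1N\sum_x\mathbb E_{y\le M}\Lambda(qy+1)\prod_i\1_A(x+P_i(y))$. Since $A$ contains no configuration with $qy+1$ prime, we split the contributions as follows.
\begin{enumerate}[label=(\alph*)]
\item The pairs $(x,y)$ with $qy+1$ prime contribute only when $0,P_1(y),\ldots,P_{k-1}(y)$ fail to be pairwise distinct. The polynomials have distinct degrees and are nonconstant, so each difference $P_i-P_j$ is a nonzero polynomial of degree at most $d$. Hence only $O_{d,k}(1)$ values of $y$ are degenerate. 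Their contribution is at most $O(\log N)\cdot O(1/M)$.
\item The prime powers $qy+1=p^j$ with $j\ge2$ contribute at most $\frac1M\sum_{n\le qM+1,\,n=p^j,\,j\ge2}\Lambda(n)\ll (qM)^{1/2}\log N/M$.
\end{enumerate}
Since $q\le w_*(N)=N^{o(1)}$, we have $M\ge N^{1/d-o(1)}$, and both contributions are $\ll N^{-1/(2d)+o(1)}$.

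\emph{The $\delta\1_{[N]}$ term.} For each $y\le M$, we check that the number of $x$ with $x,x+P_i(y)\in[N]$ for all $i$ is $N-O(\max_i|P_i(y)|)$. The $(C,q)$-coefficient condition gives $|P_i(y)|\le dC|a_d|M^{\deg P_i}$. For the top degree $d$ this is $\le dC^2q^{d-1}M^d\le dC^2N$. This bound is too large to be useful directly, so we restrict $y$ to $y\le \epsilon M$ with $\epsilon=\epsilon(C,d)$ small, which makes $|P_i(y)|\le N/2$ for all $i$. For those $y$ at least $N/2$ values of $x$ survive. Positivity of $\Lambda$ then gives
\[
T_{N,M,\Lambda,q}(\delta\1_{[N]},\ldots,\delta\1_{[N]})\ge \tfrac12\delta^k\,\frac1M\sum_{y\le\epsilon M}\Lambda(qy+1).
\]

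\emph{Comparing the prime sums.} We need the short sum over $y\le\epsilon M$ to be comparable to $\mathbb E_{y\le M}\Lambda(qy+1)$. For this we invoke Lemma~\ref{l:landau-page-compatible} with $T=M$ and $T=\lfloor\epsilon M\rfloor$; its hypotheses hold since $q\le\widetilde Q$ and $qT\in[N^{1/A},N^{A}]$. Both sums equal $\frac{q}{\phi(q)}(1-(qT+1)^{\beta-1}\beta^{-1}\1_{q_{\Siegel}\mid q})$, up to $O(\exp(-c\sqrt{\log N}))$.

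This step is the main obstacle: the possible Siegel term. Its main part $\rho(T)=1-(qT+1)^{\beta-1}/\beta$ could be tiny. We handle it as follows.
\begin{itemize}
\item If $q_{\Siegel}\nmid q$ or there is no Siegel zero, both main terms are $q/\phi(q)\ge1$.
\item Otherwise, write $\alpha=1-\beta$ and $L=\log(qM)\asymp\log N$. The concavity argument used in Lemma~\ref{l:powered-cramer} gives $\rho(\epsilon M)\gg_\epsilon\rho(M)$, since $\log(\epsilon qM)/\log(qM)\ge1/2$.
\item The error term must also be dominated by $\rho(M)$. The bound $1-\beta\gg q_{\Siegel}^{-1/2}/\log^2$ shows $\rho(M)\gg\min(1,\alpha L)\gg \widetilde Q^{-1}\ge\exp(-2(\log N)^{c_0})$, and $\exp(-c\sqrt{\log N})$ is much smaller than this.
\end{itemize}
It follows that $\frac1M\sum_{y\le\epsilon M}\Lambda(qy+1)\gg_{C,d}\mathbb E_{y\le M}\Lambda(qy+1)$, and moreover $\mathbb E_{y\le M}\Lambda(qy+1)\gg\exp(-2(\log N)^{c_0})$.

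\emph{Combining.} The model term is therefore at least $2c_1\delta^k\mathbb E_{y\le M}\Lambda(qy+1)$. The $\1_A$ term is $N^{-1/(2d)+o(1)}$, which is smaller than $c_1\delta^k\exp(-2(\log N)^{c_0})$ because $\delta>\exp(-(\log N)^{1/100})$. This yields \eqref{e:full-set-discrepancy}.
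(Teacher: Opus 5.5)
Your proposal is correct and follows essentially the same route as the paper. You restrict to shifts $y\le\epsilon M$ so that the full-set count is $\gg\delta^k$ times the prime mass, then compare the prime masses at scales $\epsilon M$ and $M$ via Lemma~\ref{l:landau-page-compatible}, concavity, and an effective lower bound for $1-\beta$ (you use $q^{-1/2}(\log q)^{-2}$ where the paper uses $q^{-1}$; either suffices). Finally you bound the count for $A$ by the degenerate shifts plus higher prime powers.

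Two small repairs are needed:
\begin{itemize}
\item The bound $|P_i(y)|\le N/2$ does not guarantee $N/2$ surviving $x$ when the shifts have opposite signs, so shrink $\epsilon$ to get $|P_i(y)|\le N/10$, as the paper does.
\item The concavity argument applies to $1-(qT+1)^{\beta-1}$, not directly to $\rho(T)$. You need to transfer it via $1-X^{\beta-1}/\beta=(1+O(1/\log X))(1-X^{\beta-1})$.
\end{itemize}
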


\begin{proof}
Choose a sufficiently small constant $\kappa=\kappa(C,d,k)>0$ and set
$M_\kappa\coloneqq\lfloor\kappa M\rfloor$. The
$(C,q)$-coefficient condition and the definition of $M$ imply, uniformly for
$1\le y\le M_\kappa$, that
$$
 \max_{1\le i\le k-1}|P_i(y)|\le N/10.
$$
The intersection
$$
 [N]\cap([N]-P_1(y))\cap\cdots\cap([N]-P_{k-1}(y))
$$
therefore has cardinality at least $N/2$.

\noindent\textbf{Claim.} We have
\begin{equation}\label{e:initial-prime-mass}
 \sum_{y\le M_\kappa}\Lambda(qy+1)
 \gg_{C,d,k}\kappa\sum_{y\le M}\Lambda(qy+1).
\end{equation}

For $T\in\{M,M_\kappa\}$ and $N$ sufficiently large, the definition of $M$
and the bound on $q$ give
$$
 N^{1/(3d)}\le qT\le N^{3d}.
$$
Since $q\le w_*(N)$, Lemma~\ref{l:landau-page-compatible}, applied with
$A=3d$, gives, for some $c>0$,
\begin{align}\label{e:landau-page-two-scales}
\mathbb E_{y\le T}\Lambda(qy+1)
&=
\frac{q}{\phi(q)}
\left(
1-\frac{(qT+1)^{\beta-1}}{\beta}
\1_{q_{\Siegel}\mid q}
\right)
+O\left(\exp\bigl(-c\sqrt{\log N}\bigr)\right),
\end{align}
where the exceptional term is omitted if no Siegel zero exists. If there is no
Siegel zero, or if $q_{\Siegel}\nmid q$, then
\eqref{e:landau-page-two-scales} produces the claim immediately. We may
therefore suppose that $q_{\Siegel}\mid q$, and set
$$
 \rho(T)\coloneqq
 1-\frac{(qT+1)^{\beta-1}}{\beta}.
$$
The elementary comparison
$$
 1-\frac{X^{\beta-1}}{\beta}
 =\left(1+O\left(\frac{1}{\log X}\right)\right)
  \bigl(1-X^{\beta-1}\bigr)
$$
therefore applies with $X=qT+1$. The effective exceptional zero bound gives
$$
 1-\beta\gg q_{\Siegel}^{-1}\ge q^{-1},
$$
and hence $\rho(T)\gg\exp(-O((\log N)^{c_0}))$. The error term in
\eqref{e:landau-page-two-scales} is therefore $o(\rho(T))$. Finally,
$M_\kappa\ge\kappa M/2$ for sufficiently large $N$, so concavity of
$u\mapsto1-e^{-(1-\beta)u}$ gives
$$
 1-(qM_\kappa+1)^{\beta-1}
 \gg_\kappa 1-(qM+1)^{\beta-1}.
$$
Thus $\rho(M_\kappa)\gg_{\kappa,d}\rho(M)$. Equation
\eqref{e:landau-page-two-scales} now gives us \eqref{e:initial-prime-mass}.

Consequently, the full set counting operator satisfies
\begin{align}\label{e:lowerncount}
 T_{N,M,\Lambda,q}(\delta\1_{[N]},\ldots,
      \delta\1_{[N]})
 &\ge \frac{\delta^k}{2M}
      \sum_{y\le M_\kappa}\Lambda(qy+1)\\
 &\gg_{C,d,k}\delta^k\mathbb E_{y\le M}\Lambda(qy+1)\nonumber.
\end{align}
By the assumption on $A$, the terms for which $qy+1$ is prime and
$0,P_1(y),\ldots,P_{k-1}(y)$ are pairwise distinct make no contribution to
$T_{N,M,\Lambda,q}(\1_A,\ldots,\1_A)$. Recall that $P_0=0$.
For every $0\le i<j\le k-1$, the polynomial $P_j-P_i$ is nonzero
and has degree at most $d$. Hence there are at most $d\binom{k}{2}$ values of $y$ for which $0,P_1(y),\ldots,P_{k-1}(y)$ are not pairwise distinct. The total
contribution from those values for which $qy+1$ is prime is
$$
 \ll_{d,k}\frac{\log(qM+1)}{M}=N^{-1/d+o(1)}.
$$
The other possible contribution comes from the terms for which
$$
    qy+1=p^a,\qquad a\ge2.
$$
Since all the other factors in the counting form are $1$-bounded and
nonnegative, these terms contribute at most
\begin{align}\label{e:lowerpowerprimesacount}
0\le T_{N,M,\Lambda,q}(\1_A,\ldots,\1_A)
&\le \frac{1}{M}
\sum_{\substack{y\le M\\ qy+1=p^a,\ a\ge2}}
\Lambda(qy+1)
+O_{d,k}\left(\frac{\log(qM+1)}{M}\right)\notag\\
&\ll \frac{(qM)^{1/2}(\log(qM))^2}{M}
\ll N^{-1/(3d)},
\end{align}
where $M=N^{1/d+o(1)}$ and $q=N^{o(1)}$ were used in the final estimate.
The right-hand side of \eqref{e:lowerpowerprimesacount} is negligible compared
with \eqref{e:lowerncount} under the stated lower bound for $\delta$.
Subtracting \eqref{e:lowerpowerprimesacount} from
\eqref{e:lowerncount} proves \eqref{e:full-set-discrepancy}.
\end{proof}

For the remainder of this section, fix $d,k\in\mathbb N$ with $k\ge2$ and
$C\ge1$, and fix
one constant $c_1=c_1(C,d,k)$ furnished by
Lemma~\ref{lem:full-set-discrepancy}. Set
$$
    \mathscr C\coloneqq 2+dC^2.
$$
If $T\ge1$, $1\le y\le T$, and $P$ is a polynomial of degree $e\le d$ with
$(C,q)$-coefficients, then
\begin{equation}\label{e:uniform-support-bound}
    |P(y)|
    \le dC^2q^{e-1}T^e
    \le (\mathscr C-1)q^{d-1}T^d.
\end{equation}

Let $s=s(d,k)$ be furnished by Proposition~\ref{l:fixed-scale-gvn}, with
$3\mathscr C$ in place of its parameter $C$. Let
$t=t(s,d)>0$ be furnished by Lemma~\ref{l:twisted-cramer}, and set
$$
    \eta_0\coloneqq\min\{2^{-s},t\}.
$$
Let $c_s>0$ be supplied by Lemma~\ref{l:prime-siegel-gowers}. Decreasing it if
necessary, assume that $c_s\le1$, and set
$\alpha_{d,k}\coloneqq c_0c_s/2$. Choose $c'=c'(d,k)>0$ sufficiently small
that
\begin{equation}\label{e:constantdesign1}
    0<10c'<
    \min\left\{
        \frac{\alpha_{d,k}}{100},
        \frac{1}{10}
    \right\}.
\end{equation}
Choose $\mathbf C_{d,k}$ and $\mathbf K_{d,k}\ge2$, depending only on
$C,d,k$, sufficiently large that
\begin{align}\label{e:constantdesign2}
    \frac{200}{\mathbf C_{d,k}}<c',
    \qquad
    \mathbf C_{d,k}\eta_0\ge 100k,
    \qquad
    \mathbf K_{d,k}^{-\eta_0/10}
    \le 2^{-4k-50}c_1.
\end{align}
We may enlarge these constants later.

The next proposition uses a chain of Gowers norm approximations to convert a
prime-weighted discrepancy into an unweighted discrepancy at a comparable
shift scale.

\begin{proposition}[Unweighted discrepancy]\label{lem:chaining}
Let $N$ be sufficiently large, and suppose that
$(\log N)^{-1/\mathbf C_{d,k}^{2}}<\delta\le1.$
Let $\sigma\ge2$ be an integer satisfying
\begin{equation}\label{e:chaining-cutoff-range}
    \left\lceil
    \mathbf K_{d,k}\delta^{-\mathbf C_{d,k}}
    \right\rceil
    \le\sigma\le(\log N)^{c'}.
\end{equation}
Let $q\in\mathbb N$ satisfy
\begin{equation}\label{e:chaining-saturated-modulus}
    q\le\exp\bigl((\log N)^{2c'}\bigr),
    \qquad
    P^+(q)\le\sigma,
    \qquad
    \mathcal L(\sigma)\mid q.
\end{equation}
Set
$$
    M\coloneqq
    \left\lfloor
    \left(\frac{N}{q^{d-1}}\right)^{1/d}
    \right\rfloor,
    \qquad
    M_{\min}\coloneqq
    \left\lceil M\exp(-\sqrt{\log M})\right\rceil.
$$
Let $P_1,\ldots,P_{k-1}\in\mathbb Z[y]$ be polynomials with
$(C,q)$-coefficients and
$$
    \deg(P_1)<\cdots<\deg(P_{k-1})=d.
$$
Suppose that $A\subseteq[N]$ with $|A|=\delta N$, and
\begin{align}\label{e:step3goal}
&\left|
T_{N,M,\Lambda,q}
(\1_A,\ldots,\1_A)
-
T_{N,M,\Lambda,q}
\bigl(
\delta\1_{[N]},\ldots,
\delta\1_{[N]}
\bigr)
\right|\ge
c_1\delta^k
\mathbb E_{y\le M}\Lambda(qy+1).
\end{align}
Then there exists an integer $M_0$ with
$M_{\min}\le M_0\le M$ such that
\begin{align}\label{e:step3goal2}
&\left|
    T_{N,M_0,1,q}
(\1_A,\ldots,\1_A)
-
    T_{N,M_0,1,q}
\bigl(
\delta\1_{[N]},\ldots,
\delta\1_{[N]}
\bigr)
\right|\ge
    \frac{c_1\delta^k}{4}.
\end{align}
\end{proposition}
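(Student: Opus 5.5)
The proof will be a chain of Gowers norm approximations of the weight $w(qy+1)$ in the counting operator. By multilinearity, the difference of the two operators in \eqref{e:step3goal} is a telescoping sum of $k$ terms. In each term one input is $\1_A-\delta\1_{[N]}$ and the others are $\1_A$ or $\delta\1_{[N]}$. The shifted arguments $x+P_i(y)$ stay in an interval of length $\le 3\mathscr C N$ by \eqref{e:uniform-support-bound}, so Proposition~\ref{l:fixed-scale-gvn} (with $3\mathscr C$) bounds any change of the weight $g(y)$ in this difference by $O(\|g\|_{U^s[M']})$ at the relevant shift scale $M'$. We replace $\frac{\phi(q)}{q}\Lambda(q\cdot+1)$ successively by the Siegel model, then by a constant multiple of $1$, keeping track of the normalizing factor $\rho$. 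The lower bound on the right of \eqref{e:step3goal} has size $c_1\delta^k\rho_M$, where by Lemma~\ref{l:landau-page-compatible} $\mathbb E_{y\le M}\Lambda(qy+1)\asymp\frac{q}{\phi(q)}\rho_M$ with $\rho_M=1-\frac{(qM+1)^{\beta-1}}{\beta}\1_{q_{\Siegel}\mid q}$. All errors must therefore be $\ll 2^{-4k}c_1\delta^k\rho$.

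\emph{Step 1 (primes to Siegel model).} Use Lemma~\ref{l:prime-siegel-gowers} with cutoff $\widetilde Q=w_*(N)$ and Lemma~\ref{le:dilate}, exactly as in the proof of Proposition~\ref{prop:wbound}. This gives $\|\frac{\phi(q)}{q}(\Lambda-\Lambda_{\Siegel,\widetilde Q})(q\cdot+1)\|_{U^s[M]}\ll \exp(-\frac12(\log\widetilde Q)^{c_s})$. The dilation loss is $q^{O(1)}=\exp(O((\log N)^{2c'}))$, which is admissible by \eqref{e:constantdesign1}. Since $\rho_M\gg q^{-1}$ and $\delta^k\ge(\log N)^{-k/\mathbf C^2}$, this error is negligible.

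\emph{Step 2 (remove the exceptional term).} If no Siegel zero exists, or $q_{\Siegel}\nmid q$, apply Lemma~\ref{l:twisted-cramer} with $Q=q$, $z=\sigma$, $w=\widetilde Q$. Its hypothesis $8\prod_{p\le\sigma}p\mid q$ follows from $\mathcal L(\sigma)\mid q$. The character term then has $U^s[M]$ norm $\ll\sigma^{-t}\le \mathbf K^{-t}\delta^{t\mathbf C}$, which is $\le 2^{-4k-50}c_1\delta^k$ by \eqref{e:constantdesign2}. In this case $M_0=M$.

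If $q_{\Siegel}\mid q$, the character is constant on $q\mathbb Z+1$, but $(qy+1)^{\beta-1}$ varies. Rather than compare with a single $\rho$ at scale $M$, I would split the $y$-range dyadically, or use partial summation in $y$ as in the proof of Lemma~\ref{l:twisted-cramer}. This writes the weighted count as an average, with nonnegative weights, of unweighted counts $T_{N,M',1,q}$ for $M'\le M$. The total mass comes from the $(1-\beta)$-derivative together with the endpoint term. The part of the mass from $M'<M_{\min}$ is bounded, by concavity of $u\mapsto 1-e^{-(1-\beta)u}$, by $O(\sqrt{\log M}/\log M)\rho_M$. It could alternatively be absorbed by the $\sigma^{-t}$ saving, using $\rho\gg q^{-1}$.

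\emph{Step 3 (Cramér model to $1$).} The Cramér model on $q\mathbb Z+1$ is close to $1$ by Proposition~\ref{p:saturated-cramer-gowers}, applied with $w=\sigma$, $Q=q$, $z=\widetilde Q$ on intervals $J=[M']$. This gives an error $\ll\sigma^{-2^{-s}}$, again $\le 2^{-4k-50}c_1\delta^k$ by the choice of $\eta_0$ and \eqref{e:constantdesign2}. The $(C,q)$-coefficients keep the polynomials admissible at every scale $M'\in[M_{\min},M]$, because Proposition~\ref{l:fixed-scale-gvn} is uniform in coefficients. Combining the steps, \eqref{e:step3goal} becomes $\rho\cdot\frac{c_1}{2}\delta^k\le$ a weighted average, of total mass $\le\rho(1+o(1))$, of the unweighted discrepancies at scales $M'\in[M_{\min},M]$. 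Pigeonholing then produces $M_0$ with discrepancy $\ge c_1\delta^k/4$.

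The main obstacle is Step 2 in the non-oscillatory case. There one must show that the partial summation weights restricted to $[M_{\min},M]$ carry mass comparable to $\rho_M$, and not just to $1$, when $\chi_{\Siegel}(1)=1$ and $\rho_M$ may be as small as $q^{-1}$. I would handle this with the concavity argument from the proof of Lemma~\ref{l:powered-cramer}: since $\log M_{\min}=\log M(1-O((\log M)^{-1/2}))$, we get $\rho(M_{\min})=\rho_M(1-o(1))$, so scales below $M_{\min}$ carry only an $o(1)$ fraction of the mass.
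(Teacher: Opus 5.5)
Your proposal is correct and follows essentially the paper's route. The chain is $\Lambda\to\Lambda_{\Siegel,\widetilde Q}\to\Lambda_{\Cramer,\widetilde Q}\to q/\phi(q)$ on $q\mathbb Z+1$. You use Lemma~\ref{l:twisted-cramer} with $z=\sigma$ when $q_{\Siegel}\nmid q$, and partial summation with a supremum over scales in $[M_{\min},M]$ when $q_{\Siegel}\mid q$. The paper applies Proposition~\ref{p:saturated-cramer-gowers} only at the single scale $t_0$ picked out by that supremum, rather than at every scale inside the average, but this difference is inessential.

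One correction to your mass accounting, though the conclusion survives. You use $\rho(M_{\min})=\rho_M(1-o(1))$ to dismiss the scales below $M_{\min}$, but it actually controls the scales in $[M_{\min},M]$: their derivative part of the partial-summation mass is at most $\rho_M-\rho(M_{\min})=o(\rho_M)$. The paper gets $O(\rho_M/\log(qM+1))$ for this part, and it is why the total mass is $\rho_M(1+o(1))$. The scales below $M_{\min}$ are negligible for a trivial reason, not by concavity: $\bigl|\sum_{y\le t}\Lambda_{\Cramer,\widetilde Q}(qy+1)g(y)\bigr|\ll t\log\widetilde Q$, and $t/M\le\exp(-\sqrt{\log M})$ there.
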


\begin{proof}
Recall that $\widetilde Q=w_*(N)$.
We use the following chain of approximations:
$$
    \Lambda
    \longrightarrow \Lambda_{\Siegel,\widetilde Q}
    \longrightarrow \Lambda_{\Cramer,\widetilde Q}
    \longrightarrow \Lambda_{\Cramer,\sigma}.
$$

\emph{Approximating the primes by the Siegel model.}\par
If $f_0,\ldots,f_{k-1}$ are $1$-bounded functions
supported on $[-\mathscr C N,\mathscr C N]$, then
we apply Proposition~\ref{l:fixed-scale-gvn} to the weight
$$
    g(y)\coloneqq
    \Lambda(qy+1)-\Lambda_{\Siegel,\widetilde Q}(qy+1).
$$
Since $\log(qM+1)\asymp_d\log N$ and $\widetilde Q=w_*(N)$, we have, for
sufficiently large $N$,
$$
\exp\bigl((\log\log(qM+1))^{1/c_s}\bigr)
\le \widetilde Q
\le \exp\bigl((\log(qM+1))^{1/10}\bigr).
$$
Thus Lemma~\ref{l:prime-siegel-gowers} applies with model scale $X=N$ and
Gowers-norm scale $Y=qM+1$. Together with
Lemma~\ref{le:dilate}, it gives
\begin{align*}
&\left|
T_{N,M,\Lambda-\Lambda_{\Siegel,\widetilde Q},q}
(f_0,\ldots,f_{k-1})
\right|\\
&\qquad\ll_{C,d,k}
\left\|
\Lambda(q\cdot+1)-\Lambda_{\Siegel,\widetilde Q}(q\cdot+1)
\right\|_{U^s[M]}\\
&\qquad\ll_{C,d,k}
q^{(s+1)/2^s}
\left\|
\Lambda-\Lambda_{\Siegel,\widetilde Q}
\right\|_{U^s[qM+1]}
\ll_{C,d,k}
\exp\bigl(-\tfrac{1}{2}(\log N)^{\alpha_{d,k}}\bigr),
\end{align*}
where the last estimate absorbs the dilation loss, since
$\log q\le(\log N)^{2c'}$ and
$2c'<\alpha_{d,k}/100$. In particular,
\begin{equation}\label{e:prime-siegel-error}
 \left|
 T_{N,M,\Lambda-\Lambda_{\Siegel,\widetilde Q},q}
 (f_0,\ldots,f_{k-1})
 \right|
 \ll_{C,d,k}
 \exp\bigl(-(\log N)^{5c'}\bigr).
\end{equation}

\emph{A lower bound for the prime mass.}\par
By \eqref{e:chaining-saturated-modulus} and $2c'<c_0$, we have
$q\le\exp((\log N)^{2c'})\le\widetilde Q$.
Moreover, for $N$ sufficiently large, the definition of $M$ and the bound on
$q$ give
$$
 N^{1/(2d)}\le qM\le N^{2d}.
$$
Lemma~\ref{l:landau-page-compatible}, applied with $A=2d$, therefore gives,
for some $c>0$,
$$
 \mathbb E_{y\le M}\Lambda(qy+1)
 =
 \frac{q}{\phi(q)}
 \left(
 1-\frac{(qM+1)^{\beta-1}}{\beta}
 \1_{q_{\Siegel}\mid q}
 \right)
 +O\bigl(\exp(-c\sqrt{\log N})\bigr),
$$
where the exceptional term is omitted if no Siegel zero is present.
When $q_{\Siegel}\mid q$, an effective exceptional zero bound
\cite[Theorem~5.28(2)]{iwaniec-kowalski}, applied with
$\varepsilon=1$, gives
\begin{equation}\label{e:chaining-exceptional-zero}
    1-\beta
    \gg q_{\Siegel}^{-1}
    \ge q^{-1}
    \ge
    \exp\bigl(-(\log N)^{2c'}\bigr).
\end{equation}
Since $\log(qM+1)\asymp_d\log N,$
the elementary estimate
$$
1-\frac{(qM+1)^{\beta-1}}{\beta}
\gg
\min\left\{
1,(1-\beta)\log(qM+1)
\right\}
$$
implies
$$
1-\frac{(qM+1)^{\beta-1}}{\beta}
\gg
\exp\bigl(-O((\log N)^{2c'})\bigr).
$$
Therefore, in all cases,
\begin{align}\label{e:lowvonm}
    \mathbb E_{y\le M}\Lambda(qy+1)
    \gg
    \exp\bigl(-O((\log N)^{2c'})\bigr).
\end{align}
\medskip

\emph{Removing the Siegel effect.}\par
We claim that, for $\tau\gg_{C,d,k}\delta^k$, whenever
\begin{align}\label{T11}
&\left|
T_{N,M,\Lambda_{\Siegel,\widetilde Q},q}
(\1_A,\ldots,\1_A)
-
T_{N,M,\Lambda_{\Siegel,\widetilde Q},q}
(\delta\1_{[N]},\ldots,\delta\1_{[N]})
\right|\ge
\tau\mathbb E_{y\le M}\Lambda(qy+1),
\end{align}
then
\begin{align}\label{e:T1.15}
&\sup_{\substack{t\in\mathbb N\\M_{\min}\le t\le M}}
\left|
T_{N,t,\Lambda_{\Cramer,\widetilde Q},q}
(\1_A,\ldots,\1_A)
-
T_{N,t,\Lambda_{\Cramer,\widetilde Q},q}
(\delta\1_{[N]},\ldots,\delta\1_{[N]})
\right|\\
&\qquad\ge
\frac{\tau}{2}\frac{q}{\phi(q)}.
\nonumber
\end{align}

If no Siegel zero is present, then
$\Lambda_{\Siegel,\widetilde Q}=\Lambda_{\Cramer,\widetilde Q}$, while
Lemma~\ref{l:landau-page-compatible} gives
$$
\mathbb E_{y\le M}\Lambda(qy+1)
=
\frac{q}{\phi(q)}
+
O\left(\exp(-c\sqrt{\log N})\right).
$$
Thus the claim follows immediately from \eqref{T11}. We may therefore
suppose that a Siegel zero is present.
Suppose first that $q_{\Siegel}\nmid q$. On the progression
$q\mathbb Z+1$, one has
$$
\Lambda_{\Siegel,\widetilde Q}(qy+1)-\Lambda_{\Cramer,\widetilde Q}(qy+1)
=
-\Lambda_{\Cramer,\widetilde Q}(qy+1)
\chi_{\Siegel}(qy+1)(qy+1)^{\beta-1}.
$$
Applying Proposition~\ref{l:fixed-scale-gvn} to
$P_1,\ldots,P_{k-1}$ and the weight
$$
    g(y)\coloneqq\frac{\phi(q)}{q}
    \bigl(
    \Lambda_{\Siegel,\widetilde Q}(qy+1)-
    \Lambda_{\Cramer,\widetilde Q}(qy+1)
    \bigr)
$$
gives
$$
\begin{aligned}
&\left|
T_{N,M,\Lambda_{\Siegel,\widetilde Q},q}(f_0,\ldots,f_{k-1})
-
T_{N,M,\Lambda_{\Cramer,\widetilde Q},q}(f_0,\ldots,f_{k-1})
\right|\\
&\ll_{C,d,k}
\frac{q}{\phi(q)}
\left\|
\frac{\phi(q)}{q}
\bigl(
\Lambda_{\Siegel,\widetilde Q}(q\cdot+1)
-
\Lambda_{\Cramer,\widetilde Q}(q\cdot+1)
\bigr)
\right\|_{U^s[M]}.
\end{aligned}
$$
We apply Lemma~\ref{l:twisted-cramer} with progression modulus $q$,
residue class $1$, parameter $\sigma$, and Cram\'er cutoff
$\widetilde Q$. Its hypotheses hold because
$$
\mathcal L(\sigma)\mid q,
\qquad
P^+(q)\le\sigma<\widetilde Q,
\qquad
(1,q)=1.
$$
The scale hypothesis \eqref{eq:twisted-flexible-scale} also holds. Indeed,
$$
 N^{1/d}\exp\bigl(-d\sqrt{\log N}\bigr)
 \le M\le N^{1/d}
$$
for $N$ sufficiently large. These bounds follow from the definition of
$M$ and the estimate
$\log q\le(\log N)^{2c'}=o(\sqrt{\log N})$. Moreover, by
\eqref{e:constantdesign1},
$$
qq_{\Siegel}
\le
\exp\bigl((\log N)^{2c'}\bigr)\widetilde Q
\le
\exp\left(
(\log N)^{2c'}
+
2(\log N)^{c_0}
\right)
\le
\exp\bigl((\log N)^{1/10}\bigr)
$$
for $N$ sufficiently large. Lemma~\ref{l:twisted-cramer} and the definition
of $\eta_0$ therefore give
$$
\left\|
\frac{\phi(q)}{q}
\bigl(
\Lambda_{\Siegel,\widetilde Q}(q\cdot+1)
-
\Lambda_{\Cramer,\widetilde Q}(q\cdot+1)
\bigr)
\right\|_{U^s[M]}
\ll_{C,d,k}\sigma^{-\eta_0},
$$
and therefore
\begin{equation}\label{e:passsicr}
\begin{aligned}
&\left|
T_{N,M,\Lambda_{\Siegel,\widetilde Q},q}(f_0,\ldots,f_{k-1})
-
T_{N,M,\Lambda_{\Cramer,\widetilde Q},q}(f_0,\ldots,f_{k-1})
\right|\ll_{C,d,k}
\frac{q}{\phi(q)}\sigma^{-\eta_0}.
\end{aligned}
\end{equation}
Since
  $\sigma\ge
  \mathbf K_{d,k}\delta^{-\mathbf C_{d,k}}$, by~\eqref{e:constantdesign2} we have
$$
    \sigma^{-\eta_0}
    \le
    \mathbf K_{d,k}^{-\eta_0}\delta^{100k}
    \ll_{C,d,k}
    \mathbf K_{d,k}^{-\eta_0}\tau,
$$
which is sufficiently small compared with $\tau$ by the choice of
$\mathbf K_{d,k}$.
Moreover, Lemma~\ref{l:landau-page-compatible} gives
\begin{equation}\label{e:lpest}
\tau\mathbb E_{y\le M}\Lambda(qy+1)
=
\tau\frac{q}{\phi(q)}
+
O\left(
\tau\exp(-c\sqrt{\log N})
\right).
\end{equation}
Applying \eqref{e:passsicr} to
$(\1_A,\ldots,\1_A)$ and
$(\delta\1_{[N]},\ldots,\delta\1_{[N]})$,
we conclude from \eqref{T11} that
$$
\begin{aligned}
&\left|
T_{N,M,\Lambda_{\Cramer,\widetilde Q},q}
(\1_A,\ldots,\1_A)
-
T_{N,M,\Lambda_{\Cramer,\widetilde Q},q}
(\delta\1_{[N]},\ldots,\delta\1_{[N]})
\right|\ge
\frac{\tau}{2}\frac{q}{\phi(q)}.
\end{aligned}
$$
Taking $t=M$ in the supremum in \eqref{e:T1.15} proves the claim when
$q_{\Siegel}\nmid q$.

\medskip

Suppose now that $q_{\Siegel}\mid q$. By
Lemma~\ref{l:landau-page-compatible},
$$
\mathbb E_{y\le M}\Lambda(qy+1)
=
\frac{q}{\phi(q)}
\left(
1-\frac{(qM+1)^{\beta-1}}{\beta}
\right)
+
O\left(\exp(-c\sqrt{\log N})\right),
$$
and hence \eqref{T11} gives
\begin{align}\label{e:lowerboundsiegelpage}
&\left|
T_{N,M,\Lambda_{\Siegel,\widetilde Q},q}
(\1_A,\ldots,\1_A)
-
T_{N,M,\Lambda_{\Siegel,\widetilde Q},q}
(\delta\1_{[N]},\ldots,\delta\1_{[N]})
\right|\\
&\qquad\ge
\tau\frac{q}{\phi(q)}
\left(
1-\frac{(qM+1)^{\beta-1}}{\beta}
\right)
-
O\left(
\tau\exp(-c\sqrt{\log N})
\right).
\nonumber
\end{align}
Since $q_{\Siegel}\mid q$, we have $\chi_{\Siegel}(qy+1)=1,$
and therefore
$$
\Lambda_{\Siegel,\widetilde Q}(qy+1)
=
\Lambda_{\Cramer,\widetilde Q}(qy+1)
\bigl(1-(qy+1)^{\beta-1}\bigr).
$$
We remove the factor $1-(qy+1)^{\beta-1}$ by partial summation.
Let $g$ be any $1$-bounded function and set
$$
S_g(t)\coloneqq
\sum_{y\le t}\Lambda_{\Cramer,\widetilde Q}(qy+1)g(y).
$$
Partial summation gives
$$
\begin{aligned}
&\left|
\mathbb E_{y\le M}
\Lambda_{\Cramer,\widetilde Q}(qy+1)
\bigl(1-(qy+1)^{\beta-1}\bigr)g(y)
\right|\\
&\le
\frac{1}{M}
\bigl(1-(qM+1)^{\beta-1}\bigr)|S_g(M)|\\
&\quad+
\frac{1}{M}\int_1^M
(1-\beta)q(qt+1)^{\beta-2}|S_g(t)|
\,\mathrm dt
\\
&\quad+
O(M^{-1}\log M).
\end{aligned}
$$
Splitting the integral at
$M\exp(-\frac{1}{2}\sqrt{\log M})$, and using $|S_g(t)|\ll t\log \widetilde Q$
on the initial range, gives an error
$O(\exp(-c\sqrt{\log M}))$. For $N$ sufficiently large, every real
$t\ge M\exp(-\frac{1}{2}\sqrt{\log M})$ satisfies
$\lfloor t\rfloor\ge M_{\min}$. Hence, on the remaining range,
$$
|S_g(t)|
=
|S_g(\lfloor t\rfloor)|
\le
t\sup_{\substack{u\in\mathbb N\\M_{\min}\le u\le M}}
\left|
\mathbb E_{y\le u}
\Lambda_{\Cramer,\widetilde Q}(qy+1)g(y)
\right|.
$$
Moreover,
$$
\frac{1}{M}
\int_{M\exp(-\frac{1}{2}\sqrt{\log M})}^{M}
t(1-\beta)q(qt+1)^{\beta-2}\,\mathrm dt
\ll
\frac{1-(qM+1)^{\beta-1}}{\log(qM+1)}.
$$
Consequently,
\begin{align}\label{e:afterpartial}
&\left|
\mathbb E_{y\le M}
\Lambda_{\Cramer,\widetilde Q}(qy+1)
\bigl(1-(qy+1)^{\beta-1}\bigr)g(y)
\right|\\
&\le
\left(
1+O\left(\frac{1}{\log(qM+1)}\right)
\right)
\bigl(1-(qM+1)^{\beta-1}\bigr)
\nonumber\\
&\times
\sup_{\substack{t\in\mathbb N\\M_{\min}\le t\le M}}
\left|
\mathbb E_{y\le t}
\Lambda_{\Cramer,\widetilde Q}(qy+1)g(y)
\right|
+
O\left(\exp(-c\sqrt{\log M})\right).
\nonumber
\end{align}
We apply this with
$$
g(n)\coloneqq
\frac{1}{N}\sum_x
\left(
\1_A(x)
\prod_{i=1}^{k-1}\1_A(x+P_i(n))
-
\delta^k\1_{[N]}(x)
\prod_{i=1}^{k-1}\1_{[N]}(x+P_i(n))
\right).
$$
This function is $1$-bounded. Using \eqref{e:lowerboundsiegelpage}, we conclude that
\begin{align}\label{e:subginafter}
&\left(
1+O\left(\frac{1}{\log(qM+1)}\right)
\right)
\bigl(1-(qM+1)^{\beta-1}\bigr)\\
&\quad\times
\sup_{\substack{t\in\mathbb N\\M_{\min}\le t\le M}}
\left|
T_{N,t,\Lambda_{\Cramer,\widetilde Q},q}
(\1_A,\ldots,\1_A)
-
T_{N,t,\Lambda_{\Cramer,\widetilde Q},q}
(\delta\1_{[N]},\ldots,\delta\1_{[N]})
\right|
\nonumber\\
&\ge
\tau\frac{q}{\phi(q)}
\left(
1-\frac{(qM+1)^{\beta-1}}{\beta}
\right)
-
O\left(
(1+\tau)\exp(-c\sqrt{\log N})
\right).
\nonumber
\end{align}
The elementary estimate
$$
\frac{(1-\beta)(qM+1)^{\beta-1}}
     {1-(qM+1)^{\beta-1}}
\ll
\frac{1}{\log(qM+1)}
$$
gives
$$
1-\frac{(qM+1)^{\beta-1}}{\beta}
=
\left(
1+O\left(\frac{1}{\log(qM+1)}\right)
\right)
\bigl(1-(qM+1)^{\beta-1}\bigr).
$$
Moreover, \eqref{e:chaining-exceptional-zero} gives
$$
1-(qM+1)^{\beta-1}
\gg
\exp\bigl(-O((\log N)^{2c'})\bigr).
$$
Together with
$\tau\gg\delta^k$ and
$\delta>(\log N)^{-1/\mathbf C_{d,k}^{2}}$, this shows that the error
in \eqref{e:subginafter} is negligible after division by
$1-(qM+1)^{\beta-1}$. Therefore
$$
\begin{aligned}
&\sup_{\substack{t\in\mathbb N\\M_{\min}\le t\le M}}
\left|
T_{N,t,\Lambda_{\Cramer,\widetilde Q},q}
(\1_A,\ldots,\1_A)
-
T_{N,t,\Lambda_{\Cramer,\widetilde Q},q}
(\delta\1_{[N]},\ldots,\delta\1_{[N]})
\right|\\
&\qquad\ge
(1-o(1))\tau\frac{q}{\phi(q)}
\ge
\frac{\tau}{2}\frac{q}{\phi(q)}
\end{aligned}
$$
for $N$ sufficiently large. This proves \eqref{e:T1.15}.

\medskip

\emph{Completing the chain of approximations.}\par
Applying \eqref{e:prime-siegel-error} once with every input equal to
$\1_A$, and once with every input equal to
$\delta\1_{[N]}$, the triangle inequality gives
$$
\begin{aligned}
&\left|
T_{N,M,\Lambda_{\Siegel,\widetilde Q},q}
(\1_A,\ldots,\1_A)
-
T_{N,M,\Lambda_{\Siegel,\widetilde Q},q}
(\delta\1_{[N]},\ldots,\delta\1_{[N]})
\right|\\
&\ge
\left|
T_{N,M,\Lambda,q}
(\1_A,\ldots,\1_A)
-
T_{N,M,\Lambda,q}
(\delta\1_{[N]},\ldots,\delta\1_{[N]})
\right|
-
O\left(
\exp\bigl(-(\log N)^{5c'}\bigr)
\right).
\end{aligned}
$$
By \eqref{e:lowvonm},
$$
\mathbb E_{y\le M}\Lambda(qy+1)
\gg
\exp\bigl(-O((\log N)^{2c'})\bigr).
$$
By this, the assumption
$\delta>(\log N)^{-1/\mathbf C_{d,k}^{2}}$, and the inequality
$5c'>2c'$, we have
$$
\exp\bigl(-(\log N)^{5c'}\bigr)
=
o\left(
\delta^k\mathbb E_{y\le M}\Lambda(qy+1)
\right).
$$
Hence, by \eqref{e:step3goal}, for $N$ sufficiently large,
$$
\begin{aligned}
&\left|
T_{N,M,\Lambda_{\Siegel,\widetilde Q},q}
(\1_A,\ldots,\1_A)
-
T_{N,M,\Lambda_{\Siegel,\widetilde Q},q}
(\delta\1_{[N]},\ldots,\delta\1_{[N]})
\right|\\
&\ge
\frac{5c_1}{6}
\delta^k\mathbb E_{y\le M}\Lambda(qy+1).
\end{aligned}
$$
Applying the claim in \eqref{e:T1.15} with
$\tau=\frac{5c_1}{6}\delta^k$, we obtain an integer
$t_0$ with $M_{\min}\le t_0\le M$ such that
\begin{align}\label{e:siegelremovalbound}
&\left|
T_{N,t_0,\Lambda_{\Cramer,\widetilde Q},q}
(\1_A,\ldots,\1_A)
-
T_{N,t_0,\Lambda_{\Cramer,\widetilde Q},q}
(\delta\1_{[N]},\ldots,\delta\1_{[N]})
\right|\ge
\frac{5c_1}{12}
\delta^k\frac{q}{\phi(q)}.
\end{align}
We compare this Cram\'er model with the constant model.
Since $M_{\min}\le t_0\le M$, we have
$\log t_0\asymp_d\log N$. Hence, for $N$ sufficiently large,
$$
    q\le\exp\bigl((\log t_0)^{2/5}\bigr),
    \qquad
    \widetilde Q+1
    \le\exp\bigl((\log t_0)^{1/10}\bigr).
$$
Moreover, the divisibility hypotheses imply
$$
    \prod_{p\le\sigma}p\mid q,
    \qquad P^+(q)\le\sigma,
    \qquad (1,q)=1.
$$
Proposition~\ref{p:saturated-cramer-gowers}, applied with
$$
    M=t_0,\qquad J=[t_0],\qquad w=\sigma,
    \qquad z=\widetilde Q,\qquad Q=q,\qquad b=1,
$$
therefore gives
$$
\left\|
\frac{\phi(q)}{q}
\Lambda_{\Cramer,\widetilde Q}(q\cdot+1)-1
\right\|_{U^s[t_0]}
\ll_{d,k}\sigma^{-\eta_0}.
$$
Since $\Lambda_{\Cramer,\sigma}(qy+1)=q/\phi(q)$,
Proposition~\ref{l:fixed-scale-gvn} now gives, for any $1$-bounded functions
$h_0,\ldots,h_{k-1}$ supported on $[N]$,
$$
\left|
T_{N,t_0,\Lambda_{\Cramer,\widetilde Q}-\Lambda_{\Cramer,\sigma},q}
(h_0,\ldots,h_{k-1})
\right|
\ll_{C,d,k}
\frac{q}{\phi(q)}\sigma^{-\eta_0}.
$$
By \eqref{e:chaining-cutoff-range}, \eqref{e:constantdesign2}, and the choice
of $\mathbf K_{d,k}$, increased if necessary to absorb the implicit constant,
the right-hand side is at most
$$
    2^{-4k-40}c_1\delta^k\frac{q}{\phi(q)}.
$$
Applying this to the two tuples in \eqref{e:siegelremovalbound}, we obtain
\begin{align}\label{e:difflaststep4}
&\left|
T_{N,t_0,\Lambda_{\Cramer,\sigma},q}
(\1_A,\ldots,\1_A)
-
T_{N,t_0,\Lambda_{\Cramer,\sigma},q}
(\delta\1_{[N]},\ldots,\delta\1_{[N]})
\right|\\
&\ge
\frac{5c_1}{12}
\delta^k\frac{q}{\phi(q)}
-\frac{c_1}{24}\delta^k\frac{q}{\phi(q)}\ge
\frac{c_1}{4}
\delta^k\frac{q}{\phi(q)}\nonumber.
\end{align}
Finally, since
$\mathcal L(\sigma)\mid q$ and $P^+(q)\le\sigma$, we have
$\Lambda_{\Cramer,\sigma}(qy+1)=q/\phi(q)$ for every
$y\in\mathbb Z$. Consequently, for every tuple of $1$-bounded functions
$h_0,\ldots,h_{k-1}$ supported on $[N]$, we have
$$
T_{N,t_0,\Lambda_{\Cramer,\sigma},q}(h_0,\ldots,h_{k-1})
=\frac{q}{\phi(q)}T_{N,t_0,1,q}(h_0,\ldots,h_{k-1}).
$$
Dividing \eqref{e:difflaststep4} by
$q/\phi(q)$ and taking $M_0=t_0$ proves
\eqref{e:step3goal2}.
\end{proof}

We use the following slight variant of the weak regularity lemma of Shao and Wang~\cite{ShW25}, which extends the work of~\cite{PP22}.

\begin{lemma}[Weak regularity]
\label{cl:short-scale-regularization}
There is a constant $A'>0$, depending on $C,d,k$, with
the following property. Let $N,q\in\mathbb N$, let
$0<\varepsilon<1/2$, and suppose that
\begin{align}\label{e:regularity-divisibility-7}
    \mathcal L\left(\left\lceil
    \varepsilon^{-A'}\right\rceil\right)\mid q,
\end{align}
and let $P_1,\ldots,P_{k-1}$ have $(C,q)$-coefficients and distinct
increasing degrees, with largest degree $d$. Set
$\nu_0\coloneqq\deg(P_1)$ and $\nu_i\coloneqq\deg(P_i)$ for
$1\le i\le k-1$. Let $M_0\in\mathbb N$ satisfy
\begin{align*}
    1\le M_0\le \left(\frac{N}{q^{d-1}}\right)^{1/d},
\end{align*}
and let $f_0,\ldots,f_{k-1}$ be $1$-bounded functions supported on
$[N]$. Then either
\begin{align}\label{e:regularity-small-alternative-7}
    M_0\ll_{C,d,k}(q/\varepsilon)^{O_{d,k}(1)},
\end{align}
or there are local factors $\mathcal B_i$ of $[N]$, $0\le i\le k-1$,
such that
\begin{align}\label{e:regularity-conclusion-7}
&\left|
T_{N,M_0,1,q}(f_0,\ldots,f_{k-1})
-
T_{N,M_0,1,q}
\bigl(
\Pi_{\mathcal B_0}f_0,\ldots,
\Pi_{\mathcal B_{k-1}}f_{k-1}
\bigr)
\right|
\le\varepsilon.
\end{align}
The factor $\mathcal B_i$ has resolution at least
\begin{align}\label{e:regularity-resolution-7}
    q^{-O_{d,k}(1)}
    \varepsilon^{O_{d,k}(1)}
    M_0^{\nu_i},
\end{align}
and all the factors have common modulus
\begin{align*}
    q^B,\qquad 1\leq B=O_d(1).
\end{align*}
\end{lemma}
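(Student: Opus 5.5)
The plan is to run the standard energy-increment (weak regularity) argument of Peluse--Prendiville and Shao--Wang, using Lemma~\ref{l:SW-local-form} as the inverse input. Build the factors one input at a time by a telescoping argument. Start with the trivial factors $\mathcal B_i^{(0)}$, whose atoms are progressions of modulus $q^{B}$ and length comparable to the target resolution; choose them to be the partition of $[N]$ into residue classes modulo $q^B$ cut into intervals. Write
\[
T(f_0,\ldots,f_{k-1})-T(\Pi_{\mathcal B_0}f_0,\ldots,\Pi_{\mathcal B_{k-1}}f_{k-1})
\]
as a sum of $k$ terms. In the $i$th term the $i$th slot carries $f_i-\Pi_{\mathcal B_i}f_i$, the earlier slots carry projections and the later slots carry the original functions. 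All of these are $1$-bounded and supported on $[N]$. If every term is at most $\varepsilon/k$ we are done. Otherwise some term exceeds $\varepsilon/k$. For $i\ge1$, Lemma~\ref{l:SW-local-form} with $\rho=\varepsilon/k$ applies directly. For $i=0$, first change variables $x\mapsto x-P_1(y)$ (or use the dual-function trick) so that the offending function sits in a slot with a nonconstant polynomial; this is why $\nu_0=\deg P_1$.

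Unless $M_0\ll(q/\varepsilon)^{O(1)}$, Lemma~\ref{l:SW-local-form} gives $q'\ll\varepsilon^{-O(1)}$, $b=O_d(1)$ and a $1$-bounded $\phi_i$ that correlates with $g_i=f_i-\Pi_{\mathcal B_i}f_i$ at level $\varepsilon^{O(1)}N$. Moreover $\phi_i$ is Lipschitz along steps of $q'q^b$ at scale $(q/\varepsilon)^{-O(1)}M_0^{\nu_i}$. The divisibility hypothesis \eqref{e:regularity-divisibility-7}, with $A'$ large, guarantees $q'\mid q$, so $q'q^b\mid q^{b+1}$. Hence $\phi_i$ is nearly constant, up to error $\varepsilon^{O(1)}$, on every progression of modulus $q^{B}$ (with $B=b+1$) and length $L_i=(q/\varepsilon)^{-O(1)}M_0^{\nu_i}$. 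Let $\mathcal B_i'$ be the local factor refining $\mathcal B_i$ into such progressions, with all moduli normalised to the common value $q^B$; enlarge $B$ once so that it works uniformly across iterations, which is possible because $b=O_d(1)$. Then $\phi_i\approx\Pi_{\mathcal B_i'}\phi_i$, so $g_i$ correlates with a $\mathcal B_i'$-measurable function. By Pythagoras this gives the energy increment
\[
\|\Pi_{\mathcal B_i'}f_i\|_2^2\ge\|\Pi_{\mathcal B_i}f_i\|_2^2+\varepsilon^{O(1)}N.
\]

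Since the energy is at most $N$, there are at most $\varepsilon^{-O(1)}$ refinements. The main obstacle is the resolution and modulus bookkeeping. A common refinement of local factors is not itself a local factor, because intersections of progressions can be short. My approach is to avoid genuine refinement. At each step I replace $\mathcal B_i$ by a factor whose atoms are progressions of modulus $q^B$ and length in $[L,2L]$, where $L$ is the current resolution, shrunk by a factor $(q/\varepsilon)^{-O(1)}$. Since each atom of the old factor is a long progression with the same modulus, any such atom is approximately (up to boundary terms of relative size $L_{\mathrm{new}}/L_{\mathrm{old}}$, which I make $\le\varepsilon^{C}$) a union of new atoms. The energy then still increases up to a negligible error. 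After $\varepsilon^{-O(1)}$ steps the resolution would drop to $(q/\varepsilon)^{-\varepsilon^{-O(1)}}M_0^{\nu_i}$, which is too weak. So, following Shao--Wang, I would run the increment separately per slot and use the fact that the lengths needed are dictated only by the Lipschitz scale at each step, not compounded. In other words, I choose all atoms at the single final scale $L_i=(q/\varepsilon)^{-O(1)}M_0^{\nu_i}$ from the start. The energy argument is then carried out on factors at a fixed scale, whose refinement is achieved by splitting residue classes modulo powers only up to $q^B$. This yields the resolution \eqref{e:regularity-resolution-7}.

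If this fixed-scale approach does not close, I would instead directly cite the Shao--Wang weak regularity lemma \cite{ShW25} and check that its proof goes through with $M_0$ below the natural scale. The key input for this is that the weighted GVN estimate, Proposition~\ref{l:fixed-scale-gvn}, and Lemma~\ref{l:SW-local-form} allow $M\le(N/q^{d-1})^{1/d}$.
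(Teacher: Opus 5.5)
Your ingredients match the paper's: the telescoping, Lemma~\ref{l:SW-local-form} (with the reanchoring at $P_1$ for slot $0$), the divisibility $q'\mid q$ giving $q'q^b\mid q^B$, and atoms of modulus $q^B$ at the final scale. The gap is in how you conclude, which does not close as written. An energy increment needs genuine refinements. You correctly see that shrinking the scale at each step compounds the resolution loss to $(q/\varepsilon)^{-\varepsilon^{-O(1)}}$, and refining by raising the modulus would destroy $B=O_d(1)$. Your fallback, an ``energy argument at a fixed scale'' with refinement ``by splitting residue classes modulo powers only up to $q^B$'', does not make sense as an iteration. Once the atoms are progressions of modulus $q^B$ with lengths in $[S_i,2S_i]$, nothing finer remains at that scale, so no energy can increase. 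You then hedge to a citation.

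The missing observation is that no iteration is needed. With $\rho=\varepsilon/(2k)$ (note $r_i=f_i-\Pi_{\mathcal B_i}f_i$ is only $2$-bounded, not $1$-bounded as you say), the inverse theorem gives correlation $\eta\gg\varepsilon^{A}$ and Lipschitz constant $L_i\ll q^A\varepsilon^{-A}M_0^{-\nu_i}$. These do not depend on the factors, so you may fix each $\mathcal B_i$ in advance: modulus $q^B$ and resolution $S_i\asymp c_2q^{-C_2}\varepsilon^{C_2}M_0^{\nu_i}$, with $C_2\ge 2A+B$ and $c_2$ small. If \eqref{e:regularity-conclusion-7} fails, some telescoping term exceeds $\varepsilon/k$, and you get $\psi_i$ with $|\sum_x r_i\psi_i|\ge\eta N$. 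On an atom, $x-x'=q^Bt$ with $|t|\le 2S_i$, so
\[
|\psi_i(x)-\psi_i(x')|\le 2L_iq^BS_i\ll c_2q^{A+B-C_2}\varepsilon^{C_2-A}\le\eta/4 .
\]
Since $r_i$ has zero sum on every atom $P$, fixing $x_P\in P$ gives
\[
\Bigl|\sum_x r_i\psi_i\Bigr|=\Bigl|\sum_P\sum_{x\in P}r_i(x)\bigl(\psi_i(x)-\psi_i(x_P)\bigr)\Bigr|\le\eta N/2,
\]
a contradiction. In your notation, $\mathcal B_i'=\mathcal B_i$, so the Pythagoras inequality you write is already contradictory at the first step. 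This one-shot argument is exactly the paper's proof. The atoms exist because $S_i\le N/(4q^B)$, which follows from $M_0\le(N/q^{d-1})^{1/d}$ once $C_2+d-1\ge B$ and $c_2\le 1/4$.
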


\begin{proof}
We first record the inverse input. Suppose that
$g_0,\ldots,g_{k-1}$ are $1$-bounded, supported on $[N]$, and
\begin{align}\label{e:regularity-inverse-hypothesis-7}
    \left|T_{N,M_0,1,q}(g_0,\ldots,g_{k-1})\right|\ge\rho.
\end{align}
From Lemma~\ref{l:SW-local-form} we see that, unless
$M_0\ll_{C,d,k}(q/\rho)^{O_{d,k}(1)}$, there are common positive integers
\begin{align*}
    q'\ll_{C,d,k}\rho^{-O_{d,k}(1)},
    \qquad b=O_d(1),
\end{align*}
and, for each $1\le i\le k-1$, a $1$-bounded function $\psi_i$ such that
\begin{align}\label{e:regularity-inverse-correlation-7}
    \left|\sum_xg_i(x)\psi_i(x)\right|
    &\gg_{C,d,k}\rho^{O_{d,k}(1)}N.
\end{align}
The functions $\psi_i$ satisfy the Lipschitz estimate
\begin{align}
    |\psi_i(x+q'q^bt)-\psi_i(x)|
    &\ll_{C,d,k}
    (q/\rho)^{O_{d,k}(1)}
    M_0^{-\nu_i}|t|.
    \label{e:regularity-inverse-lipschitz-7}
\end{align}
The same type of conclusion holds for $i=0$. Indeed, the change of variables
$x'=x+P_1(y)$ leaves the functions unchanged, so they remain supported on
$[N]$, and reanchors the shifted system as
\begin{align*}
    -P_1,\quad P_2-P_1,\quad\ldots,\quad P_{k-1}-P_1.
\end{align*}
This system has distinct increasing degrees and $(2C^2,q)$-coefficients.
Applying Lemma~\ref{l:SW-local-form} to the reanchored average supplies a
common pair $q',b$ for its shifted inputs, possibly different from the pair
supplied by the first application. In particular, $g_0$ is attached to
$-P_1$, so its Lipschitz scale is
$M_0^{-\deg(P_1)}=M_0^{-\nu_0}$.

We now construct the factors. The bounds above imply that there is
$A=O_{d,k}(1)$ such that, when $\rho=\varepsilon/(2k)$, the
correlation in \eqref{e:regularity-inverse-correlation-7} is at least
$\eta N$, where
\begin{align*}
    \eta\gg_{C,d,k}\varepsilon^A,
\end{align*}
and the Lipschitz constant in
\eqref{e:regularity-inverse-lipschitz-7} is at most
\begin{align*}
    L_i\ll_{C,d,k}
    q^A\varepsilon^{-A}M_0^{-\nu_i}.
\end{align*}
Choose $A'$ so that every integer $q'$ arising from
such an application satisfies
$q'\le\lceil\varepsilon^{-A'}\rceil$. By
\eqref{e:regularity-divisibility-7}, we then have $q'\mid q$. Choose
$B=O_d(1)$ larger than every possible $b+1$, so that
$q'q^b\mid q^B$.

Choose $C_2=O_{d,k}(1)$ sufficiently large that
\begin{align}\label{e:regularity-C2-conditions-7}
    C_2\ge2A+B,
    \qquad
    C_2+d-1\ge B,
\end{align}
and then choose $0<c_2\le1/4$ sufficiently small in terms of
$C,d,k$. Set
\begin{align*}
    S_i\coloneqq
    \max\left\{
    1,
    \left\lfloor
    c_2q^{-C_2}\varepsilon^{C_2}M_0^{\nu_i}
    \right\rfloor
    \right\}.
\end{align*}
We construct $\mathcal B_i$ as a local factor of modulus $q^B$ and
resolution $S_i$. Fix a residue class modulo $q^B$ and list its
points in $[N]$ in increasing order. If $S_i=1$, partition these
points into singletons. If $S_i\ge2$, then
\begin{align*}
    S_i
    &\le c_2q^{-C_2}\varepsilon^{C_2}M_0^{\nu_i}
    \le c_2q^{-C_2}\frac{N}{q^{d-1}}
    \le \frac{N}{4q^B},
\end{align*}
where we used $\nu_i\le d$, the upper bound on $M_0$,
\eqref{e:regularity-C2-conditions-7}, and $c_2\le1/4$. Thus every
nonempty residue class has at least
$\lfloor N/q^B\rfloor\ge4S_i-1\ge S_i$ points. Partition its
ordered points into consecutive blocks of cardinality $S_i$, merging
the final block with the preceding block if it has fewer than $S_i$
points. The resulting blocks have cardinality between $S_i$ and
$2S_i$ and are of the form $(q^B\mathbb Z+a)\cap I$ for an interval
$I$. They therefore form a local factor $\mathcal B_i$ of modulus
$q^B$ and resolution $S_i$.

Since $\max\{1,\lfloor u\rfloor\}\ge u/2$ for $u>0$, we have
\begin{align*}
    S_i
    \ge \frac{c_2}{2}q^{-C_2}\varepsilon^{C_2}M_0^{\nu_i},
\end{align*}
which proves \eqref{e:regularity-resolution-7}. If $x,x'$ lie in the
same atom of $\mathcal B_i$, then
\begin{align}\label{e:regularity-atom-difference-7}
    x-x'=q^Bt
    \qquad\text{for some }|t|\le2S_i.
\end{align}

Suppose that \eqref{e:regularity-conclusion-7} fails. Set
\begin{align*}
    h_i=\Pi_{\mathcal B_i}f_i,
    \qquad r_i=f_i-h_i.
\end{align*}
A telescoping expansion gives an index $i$ for which the counting
form with $r_i$ in the $i$th coordinate has magnitude greater than
$\varepsilon/k$. Since $|r_i|\le2$, we can apply the inverse input above
with $r_i/2$ in that coordinate and $\rho=\varepsilon/(2k)$. The
small alternative is \eqref{e:regularity-small-alternative-7};
otherwise there is a function $\psi_i$ satisfying
\eqref{e:regularity-inverse-correlation-7} and
\eqref{e:regularity-inverse-lipschitz-7}. Absorbing the factor $2$
from $r_i/2$ into the implicit constant, we may write the correlation
as
\begin{align}\label{e:regularity-correlation-contradiction-7}
    \left|\sum_xr_i(x)\psi_i(x)\right|\ge\eta N,
    \qquad
    \eta\gg_{C,d,k}\varepsilon^A.
\end{align}
If $x,x'$ lie in a nonsingleton atom, then
\eqref{e:regularity-atom-difference-7}, the divisibility
$q'q^b\mid q^B$, and \eqref{e:regularity-inverse-lipschitz-7} give
\begin{align*}
    |\psi_i(x)-\psi_i(x')|
    &\le2L_iq^BS_i\\
    &\ll_{C,d,k}
    c_2q^{A+B-C_2}\varepsilon^{C_2-A}
    \le\frac{\eta}{4},
\end{align*}
by \eqref{e:regularity-C2-conditions-7} and the choice of $c_2$.
The same bound is immediate for singleton atoms.

On every atom $P$ of $\mathcal B_i$, we have
$\sum_{x\in P}r_i(x)=0$. Fixing $x_P\in P$, we therefore obtain
\begin{align*}
    \left|\sum_xr_i(x)\psi_i(x)\right|
    &=
    \left|\sum_P\sum_{x\in P}
    r_i(x)\bigl(\psi_i(x)-\psi_i(x_P)\bigr)\right|\\
    &\le
    \sum_P\sum_{x\in P}2\cdot\frac{\eta}{4}
    =\frac{\eta N}{2},
\end{align*}
contradicting \eqref{e:regularity-correlation-contradiction-7}. This proves
\eqref{e:regularity-conclusion-7}.
\end{proof}

Increase $\mathbf C_{d,k}$ and $\mathbf K_{d,k}$, if necessary, so that
\begin{align}\label{e:constantdesign3}
\mathbf C_{d,k}&\ge3kA',
&
\mathbf K_{d,k}&\ge(16/c_1)^{A'}+1.
\end{align}
Proposition~\ref{lem:chaining} remains valid after these enlargements, since
its assumptions only become stronger.
Then, for every $0<\delta\le1$,
\begin{align}\label{e:regularity-cutoff-comparison}
    \left\lceil
    ((c_1/16)\delta^{3k})^{-A'}
    \right\rceil
    \le
    \left\lceil
    \mathbf K_{d,k}\delta^{-\mathbf C_{d,k}}
    \right\rceil.
\end{align}
The choice ensures that every auxiliary modulus supplied by the Shao--Wang
inverse theorem at the accuracies used below divides $q$ whenever the
right-hand side of \eqref{e:regularity-cutoff-comparison} is at most $\sigma$
and $\mathcal L(\sigma)\mid q$. This property is preserved when $q$ is
replaced by a positive power of itself.

\begin{lemma}[Density increment]\label{l:case3}
Let $N$ be sufficiently large, and suppose that
$(\log N)^{-1/\mathbf C_{d,k}^{2}}<\delta\le1.$
Let $\sigma\ge2$ be an integer satisfying
\begin{equation}\label{e:case3-cutoff}
    \left\lceil
    \mathbf K_{d,k}\delta^{-\mathbf C_{d,k}}
    \right\rceil
    \le\sigma\le(\log N)^{c'}.
\end{equation}
Let $q\in\mathbb N$ satisfy
\begin{equation}\label{e:case3-saturation}
    q\le\exp\bigl((\log N)^{2c'}\bigr),
    \qquad
    P^+(q)\le\sigma,
    \qquad
    \mathcal L(\sigma)\mid q.
\end{equation}
Set
$$
    M\coloneqq
    \left\lfloor
    \left(\frac{N}{q^{d-1}}\right)^{1/d}
    \right\rfloor.
$$
Let $P_1,\ldots,P_{k-1}$ have $(C,q)$-coefficients. Assume that their
degrees are distinct and increasing, with $\deg(P_{k-1})=d$. Set
$\nu_0\coloneqq\deg(P_1)$ and $\nu_i\coloneqq\deg(P_i)$ for
$1\le i\le k-1$. Suppose that $A\subseteq[N]$,
$|A|=\delta N$, and that $A$ contains no configuration
\begin{align*}
&x,x+P_1(y),\ldots,x+P_{k-1}(y)\in A,
\qquad qy+1\in\mathbb P,\\
&0,P_1(y),\ldots,P_{k-1}(y) \textup{ are pairwise distinct}.
\end{align*}
Then there exist $c=c(C,d,k)>0$, integers $B,N''\geq 1$, and $a$, with
$B=O_d(1)$ and $N''>0$, such that
$$
    N''\ge
    q^{-O_{d,k}(1)}
    \delta^{O_{d,k}(1)}
    M^{\deg(P_1)}
    \exp\bigl(-d\sqrt{\log M}\bigr).
$$
Moreover, $a+q^B[N'']\subseteq[N]$
and
\begin{equation}\label{e:case3-density-increment}
    \left|
    A\cap\bigl(a+q^B[N'']\bigr)
    \right|
    \ge
    (1+c)\delta N''.
\end{equation}
\end{lemma}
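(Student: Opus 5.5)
Chain the three preceding results, then extract the increment by telescoping. First, since $A$ contains no configuration and $q\le\exp((\log N)^{2c'})\le w_*(N)$, Lemma~\ref{lem:full-set-discrepancy} gives the prime-weighted discrepancy \eqref{e:full-set-discrepancy}. This is exactly the hypothesis \eqref{e:step3goal} of Proposition~\ref{lem:chaining}, whose remaining hypotheses coincide with \eqref{e:case3-cutoff} and \eqref{e:case3-saturation}. It produces a scale $M_0$ with $M_{\min}\le M_0\le M$ and an unweighted discrepancy of size at least $c_1\delta^k/4$ in \eqref{e:step3goal2}.

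Next, apply Lemma~\ref{cl:short-scale-regularization} twice: once to $(\1_A,\ldots,\1_A)$ and once to $(\delta\1_{[N]},\ldots,\delta\1_{[N]})$, with $\varepsilon=(c_1/16)\delta^{3k}$. The divisibility hypothesis \eqref{e:regularity-divisibility-7} holds by \eqref{e:regularity-cutoff-comparison} together with $\mathcal L(\sigma)\mid q$. The small alternative \eqref{e:regularity-small-alternative-7} cannot occur. Indeed, $M_0\ge M\exp(-\sqrt{\log M})=N^{1/d-o(1)}$, while $(q/\varepsilon)^{O(1)}\le\exp(O((\log N)^{2c'}))$ because $\delta>(\log N)^{-1/\mathbf C_{d,k}^2}$. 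The lemma is stated for a single tuple, so I would run its proof jointly: form the common refinements of the factors (still local factors of modulus $q^B$, after a mild re-blocking). Alternatively, note that $\Pi_{\mathcal B}(\delta\1_{[N]})=\delta\1_{[N]}$ for any factor of $[N]$, so only the $\1_A$ tuple needs regularizing. Either way, we obtain local factors $\mathcal B_i$ of modulus $q^B$ and resolution $S_i\ge q^{-O(1)}\delta^{O(1)}M_0^{\nu_i}$ with
$$\bigl|T_{N,M_0,1,q}(\Pi_{\mathcal B_0}\1_A,\ldots,\Pi_{\mathcal B_{k-1}}\1_A)-T_{N,M_0,1,q}(\delta\1_{[N]},\ldots,\delta\1_{[N]})\bigr|\ge \tfrac{c_1\delta^k}{4}-\tfrac{c_1}{16}\delta^{3k}\ge\tfrac{c_1\delta^k}{8}.$$

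Now argue by contradiction. Suppose every atom $P$ of every $\mathcal B_i$ with length at least $S_i$ satisfied $|A\cap P|\le(1+c)\delta|P|$. Write $\Pi_{\mathcal B_i}\1_A=\delta\1_{[N]}+e_i$ with $e_i\le c\delta$ pointwise. The negative part of $e_i$ is controlled using $\sum_x e_i(x)=0$, which gives $\|e_i\|_{L^1}\le 2c\delta N$. Telescoping the $k$-linear form, each term has one $e_i$ and the other entries bounded by $(1+c)\delta$ in absolute value. Because the form averages over $x$ for fixed $y$, each term is at most $(1+c)^{k-1}\delta^{k-1}\cdot 2c\delta\ll c\delta^k$. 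Choosing $c$ small relative to $c_1/8$ gives the contradiction. Hence some atom $P=(q^B\mathbb Z+a')\cap I$ has density at least $(1+c)\delta$. Writing it as $a+q^B[N'']$ with $N''=|P|\ge S_i$ gives $N''\ge q^{-O(1)}\delta^{O(1)}M_0^{\nu_i}$. Since $\nu_i\ge\deg P_1$ and $M_0\ge M\exp(-\sqrt{\log M})$, this implies $N''\ge q^{-O(1)}\delta^{O(1)}M^{\deg P_1}\exp(-d\sqrt{\log M})$. The containment in $[N]$ holds because atoms lie in $[N]$.

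\textbf{Main obstacle.} The delicate step is the telescoping bound. It must be done in $L^1$ rather than $L^\infty$, since $e_i$ may be very negative on some atoms. This requires a counting form estimate of the type $|T(f_0,\ldots,g,\ldots)|\le\prod_{j\neq i}\|f_j\|_\infty\cdot\frac1N\|g\|_{L^1}$, obtained by fixing $y$ and changing variables in $x$; this estimate is valid. A secondary concern is making the two regularizations compatible. The observation that $\delta\1_{[N]}$ is fixed by every projection should handle this.
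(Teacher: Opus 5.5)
Your proposal is correct and follows the paper's proof essentially step for step. The sequence is Lemma~\ref{lem:full-set-discrepancy}, then Proposition~\ref{lem:chaining}, then Lemma~\ref{cl:short-scale-regularization} with $\varepsilon=(c_1/16)\delta^{3k}$ (the small alternative is excluded by $\log M_0\gg\log N$), and finally the $\ell^1$ telescoping argument forcing some atom to have density exceeding $(1+c)\delta$. The paper regularizes only the tuple $(\1_A,\ldots,\1_A)$ and compares it with $T_{N,M_0,1,q}(\delta\1_{[N]},\ldots,\delta\1_{[N]})$ by the triangle inequality, which is your second option, so the common-refinement detour is unnecessary.
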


\begin{proof}
By the density assumption and \eqref{e:constantdesign1}, for $N$
sufficiently large,
\begin{align*}
    \delta
    &>
    (\log N)^{-1/\mathbf C_{d,k}^{2}}
    >
    \exp\bigl(-(\log N)^{1/100}\bigr),\\
    q
    &\le
    \exp\bigl((\log N)^{2c'}\bigr)
    \le w_*(N),
\end{align*}
where the last inequality follows from $2c'<c_0$.
Thus all the hypotheses of
Lemma~\ref{lem:full-set-discrepancy} are satisfied. Hence
$$
\begin{aligned}
&\left|
T_{N,M,\Lambda,q}(\1_A,\ldots,\1_A)
-
T_{N,M,\Lambda,q}
(\delta\1_{[N]},\ldots,\delta\1_{[N]})
\right|\ge
c_1\delta^k
\mathbb E_{y\le M}\Lambda(qy+1).
\end{aligned}
$$
We may therefore apply Proposition~\ref{lem:chaining} with the cutoff
$\sigma$. It gives an integer $M_0$ satisfying
\begin{align}\label{e:density-increment-short-scale-7}
    \left\lceil M\exp(-\sqrt{\log M})\right\rceil
    \le M_0\le M
\end{align}
and
\begin{align}\label{e:density-increment-unweighted-7}
&\left|
T_{N,M_0,1,q}(\1_A,\ldots,\1_A)
-
T_{N,M_0,1,q}
(\delta\1_{[N]},\ldots,\delta\1_{[N]})
\right|
\ge\frac{c_1}{4}\delta^k.
\end{align}

Set $\varepsilon=(c_1/16)\delta^{3k}$. By
\eqref{e:regularity-cutoff-comparison}, \eqref{e:case3-cutoff}, and
\eqref{e:case3-saturation},
\begin{align*}
    \mathcal L\left(
    \left\lceil
    \varepsilon^{-A'}
    \right\rceil
    \right)\mid q.
\end{align*}
We may therefore apply Lemma~\ref{cl:short-scale-regularization} at
the shift scale $M_0$ to
$f_0=\cdots=f_{k-1}=\1_A$. If its small alternative holds,
then
\begin{align*}
    M_0\ll_{C,d,k}(q/\delta)^{O_{d,k}(1)}.
\end{align*}
It follows from the upper bounds on $q$ and $\delta^{-1}$ that
\begin{align*}
    \log M_0
    \ll_{C,d,k}
    (\log N)^{2c'}+\log\log N.
\end{align*}
On the other hand, the definition of $M$, the upper bound on $q$,
and \eqref{e:density-increment-short-scale-7} give, for $N$
sufficiently large,
\begin{align*}
    \log M_0
    \ge\log M-\sqrt{\log M}
    \ge\frac{1}{3d}\log N.
\end{align*}
These estimates contradict \eqref{e:constantdesign1} for $N$
sufficiently large.

We therefore obtain local factors $\mathcal B_i$, $0\le i\le k-1$,
with common modulus $q^B$, $B=O_d(1)$, and resolution at least
\begin{align}\label{e:density-increment-resolution-7}
    q^{-O_{d,k}(1)}
    \delta^{O_{d,k}(1)}
    M_0^{\nu_i}.
\end{align}
Set $F_i=\Pi_{\mathcal B_i}\1_A$. By
\eqref{e:regularity-conclusion-7},
\eqref{e:density-increment-unweighted-7}, and
$\varepsilon\le(c_1/16)\delta^k$,
\begin{align}\label{e:regularized-unweighted-discrepancy-7}
&\left|
T_{N,M_0,1,q}(F_0,\ldots,F_{k-1})
-
T_{N,M_0,1,q}
(\delta\1_{[N]},\ldots,\delta\1_{[N]})
\right|
\ge\frac{c_1}{8}\delta^k.
\end{align}

Fix $0<c\le1$, to be chosen below, and set
\begin{align*}
    S_i\coloneqq
    \{x\in[N]:F_i(x)>(1+c)\delta\}.
\end{align*}
Suppose that every $S_i$ is empty. Then $F_i\le(1+c)\delta$, while
preservation of mass under conditional expectation gives
\begin{align*}
    \sum_x\bigl(F_i(x)-\delta\1_{[N]}(x)\bigr)=0,
    \qquad
    \|F_i-\delta\1_{[N]}\|_{\ell^1}
    \le2c\delta N.
\end{align*}
Telescoping the left-hand side of
\eqref{e:regularized-unweighted-discrepancy-7}, and using translation
invariance of the $\ell^1$-norm in the sum over $x$, now gives
\begin{align*}
&\left|
T_{N,M_0,1,q}(F_0,\ldots,F_{k-1})
-
T_{N,M_0,1,q}
(\delta\1_{[N]},\ldots,\delta\1_{[N]})
\right|\\
&\qquad\le
\frac{((1+c)\delta)^{k-1}}{N}
\sum_{i=0}^{k-1}
\|F_i-\delta\1_{[N]}\|_{\ell^1}
\le2kc(1+c)^{k-1}\delta^k.
\end{align*}
Choosing $c=c(C,d,k)>0$ sufficiently small contradicts
\eqref{e:regularized-unweighted-discrepancy-7}. Hence $S_i$ is
nonempty for some $i$.

The function $F_i$ is constant on every atom of $\mathcal B_i$, so
$S_i$ is a union of such atoms. Choose an atom $B_0\subseteq S_i$.
Writing $B_0=a+q^B[N'']$, for every $x\in B_0$ we have
\begin{align*}
    \frac{|A\cap B_0|}{|B_0|}
    =F_i(x)>(1+c)\delta.
\end{align*}
Moreover, \eqref{e:density-increment-resolution-7} and
\eqref{e:density-increment-short-scale-7} give
\begin{align*}
    N''=|B_0|
    &\ge
    q^{-O_{d,k}(1)}
    \delta^{O_{d,k}(1)}
    M_0^{\nu_i}\\
    &\ge
    q^{-O_{d,k}(1)}
    \delta^{O_{d,k}(1)}
    M^{\deg(P_1)}
    \exp\bigl(-d\sqrt{\log M}\bigr).
\end{align*}
This proves \eqref{e:case3-density-increment}.
\end{proof}

\begin{proof}[Proof of Theorem~\ref{thm_poly}]
We first restrict $A$ to a dense progression whose modulus contains all the
required small prime factors. We then iterate Lemma~\ref{l:case3}; the main
quantitative point is to show that the arising moduli and intervals remain in
the ranges required by that lemma until the density exceeds $1$.

\emph{Initial progression and rescaling.}\par
The case $k=1$ is immediate. Reindex the polynomials so that
$\deg(P_1)<\cdots<\deg(P_{k-1})=d$. Choose $C\ge1$, depending only on the
original polynomial system, so large that, for every $Q\in\mathbb N$ and every
$1\le j\le k-1$, the polynomial $P_j(Qy)/Q$ has $(C,Q)$-coefficients. Set
$$
    r\coloneqq\deg(P_1),
    \qquad
    \theta\coloneqq\frac{r}{d},
    \qquad
    \alpha\coloneqq\frac{|A|}{N}.
$$
Fix a sufficiently small constant $c_*>0$, to be chosen at the end.
There is nothing to prove if
$\alpha\le(\log N)^{-c_*}$, so suppose that
\begin{equation}\label{e:main-contradiction-density}
    \alpha>(\log N)^{-c_*}.
\end{equation}
Set $\sigma\coloneqq
    \left\lceil
    \mathbf K_{d,k}\alpha^{-\mathbf C_{d,k}}\right\rceil$ and $Q_0\coloneqq\mathcal L(\sigma).$
Then
\begin{equation}\label{e:initial-modulus-bound}
    \log Q_0
    \le \sigma\log\sigma
    \le \alpha^{-O_{C,d,k}(1)}.
\end{equation}
By \eqref{e:main-contradiction-density} and
\eqref{e:initial-modulus-bound},
$$
    \log Q_0
    \le
    \alpha^{-O_{C,d,k}(1)}
    \le
    (\log N)^{O_{C,d,k}(c_*)}.
$$
After choosing $c_*>0$ sufficiently small in terms of $C,d,k$,
we therefore have
\begin{equation}\label{e:initial-modulus-small}
    \log Q_0\le\frac{1}{4}\log N,
    \qquad\text{and hence}\qquad
    Q_0\le N^{1/4}.
\end{equation}
In particular, $Q_0\le N/10$ for $N$ sufficiently large, so the
initial pigeonhole argument applies.
Partition $[N]$ into residue classes modulo $Q_0$. By the
pigeonhole principle, there are integers $a_0$ and $N_0>0$ such that
$$
    a_0+Q_0[N_0]\subseteq[N],
    \qquad
    N_0\ge\frac{N}{Q_0}-1,
$$
and the set
$$
    A_0\coloneqq
    \{x\in[N_0]:a_0+Q_0x\in A\}
$$
has density $\delta_0\coloneqq\frac{|A_0|}{N_0}\ge\alpha.$
For $1\le j\le k-1$, define
$$
    P_{0,j}(y)\coloneqq\frac{P_j(Q_0y)}{Q_0}.
$$
Since $P_j(0)=0$, each $P_{0,j}$ is an integer polynomial. Moreover,
if
$$
    P_j(y)=\sum_{t=1}^{\deg(P_j)}a_{j,t}y^t,
$$
then
$$
    P_{0,j}(y)
    =
    \sum_{t=1}^{\deg(P_j)}
    a_{j,t}Q_0^{t-1}y^t.
$$
By the choice of $C$, the polynomials $P_{0,j}$ have
$(C,Q_0)$-coefficients.
We claim that the set $A_0$ contains no configuration
\begin{align*}
&x,\ x+P_{0,1}(y),\ldots,x+P_{0,k-1}(y)\in A_0,
\qquad Q_0y+1\in\mathbb P,\\
&0,P_{0,1}(y),\ldots,P_{0,k-1}(y) \textup{ are pairwise distinct}.
\end{align*}
Indeed, any such configuration would give
$$
\begin{aligned}
    a_0+Q_0x,\quad
    a_0+Q_0x+P_1(Q_0y),\quad\ldots,\quad
    a_0+Q_0x+P_{k-1}(Q_0y)
    \in A.
\end{aligned}
$$
Writing $p=Q_0y+1$, this is a configuration
$$
    a_0+Q_0x,\quad
    a_0+Q_0x+P_1(p-1),\quad\ldots,\quad
    a_0+Q_0x+P_{k-1}(p-1)
$$
in $A$. Moreover, for every $1\le j\le k-1$, we have $P_j(p-1)=Q_0P_{0,j}(y).$
Since multiplication by $Q_0$ preserves pairwise distinctness, the values
$0,P_1(p-1),\ldots,P_{k-1}(p-1)$ are pairwise distinct. This contradicts
the hypothesis.

\emph{The density increment step.}\par
We now iterate Lemma~\ref{l:case3}. Suppose inductively that
$A_i\subseteq[N_i]$ has density $\delta_i\ge\alpha$, that
$$
    P_{i,j}(y)
    \coloneqq
    \frac{P_j(Q_i y)}{Q_i},
    \qquad
    1\le j\le k-1,
$$
and that $A_i$ contains no configuration
\begin{align*}
&x,\ x+P_{i,1}(y),\ldots,x+P_{i,k-1}(y)\in A_i,
\qquad Q_i y+1\in\mathbb P,\\
&0,P_{i,1}(y),\ldots,P_{i,k-1}(y) \textup{ are pairwise distinct}.
\end{align*}
We also maintain
\begin{equation}\label{e:iteration-saturation-invariant}
    P^+(Q_i)\le\sigma,
    \qquad
    \mathcal L(\sigma)\mid Q_i.
\end{equation}
These conditions hold for $i=0$, since
$Q_0=\mathcal L(\sigma)$.
At stage $i$, set
$$
    M_i\coloneqq
    \left\lfloor
    \left(\frac{N_i}{Q_i^{d-1}}\right)^{1/d}
    \right\rfloor.
$$
Since $\delta_i\ge\alpha$, the definition of $\sigma$ gives
\begin{equation}\label{e:iteration-cutoff-range}
    \left\lceil
    \mathbf K_{d,k}\delta_i^{-\mathbf C_{d,k}}
    \right\rceil
    \le\sigma.
\end{equation}
In particular,
\eqref{e:iteration-saturation-invariant} supplies the arithmetic
hypotheses $P^+(Q_i)\le\sigma$ and $\mathcal L(\sigma)\mid Q_i$
required by Lemma~\ref{l:case3}.
Assume for the moment that
\begin{equation}\label{e:iteration-quantitative-hypotheses}
    \delta_i>
    (\log N_i)^{-1/\mathbf C_{d,k}^{2}},
    \qquad
    Q_i\le
    \exp\bigl((\log N_i)^{2c'}\bigr),
    \qquad
    \sigma\le(\log N_i)^{c'}.
\end{equation}
These estimates will be verified uniformly below. Together with
\eqref{e:iteration-saturation-invariant} and
\eqref{e:iteration-cutoff-range}, they show that all the hypotheses of
Lemma~\ref{l:case3} are satisfied with
$$
    N=N_i,\qquad
    \delta=\delta_i,\qquad
    \sigma=\sigma,\qquad
    q=Q_i.
$$
At this application the exceptional data in Definition~\ref{d:Siegel} are
redefined at scale $N_i$, and the chaining argument terminates
at the corresponding cutoff $w_*(N_i)$.
Thus there are integers $B_i,N_{i+1}\geq 1$, and $a_i$, with
$B_i=O_d(1)$, such that
\begin{equation}\label{e:iteration-progression-containment}
    a_i+Q_i^{B_i}[N_{i+1}]
    \subseteq[N_i],
\end{equation}
\begin{equation}\label{e:iteration-density-increment}
    \left|
    A_i\cap
    \bigl(a_i+Q_i^{B_i}[N_{i+1}]\bigr)
    \right|
    \ge
    (1+c)\delta_iN_{i+1},
\end{equation}
and
\begin{equation}\label{e:iteration-length-correct}
    N_{i+1}
    \ge
    Q_i^{-O_{d,k}(1)}
    \delta_i^{O_{d,k}(1)}
    M_i^r
    \exp\bigl(-d\sqrt{\log M_i}\bigr).
\end{equation}
Since $\delta_i\ge\alpha$, we may in particular record the uniform
lower bound
\begin{equation}\label{e:iteration-length-uniform}
    N_{i+1}
    \ge
    Q_i^{-O_{d,k}(1)}
    \alpha^{O_{d,k}(1)}
    M_i^r
    \exp\bigl(-d\sqrt{\log M_i}\bigr).
\end{equation}
Define
$$
    A_{i+1}
    \coloneqq
    \left\{
        x\in[N_{i+1}]:
        a_i+Q_i^{B_i}x\in A_i
    \right\},
    \qquad
    \delta_{i+1}
    \coloneqq
    \frac{|A_{i+1}|}{N_{i+1}}.
$$
Then \eqref{e:iteration-density-increment} gives
\begin{equation}\label{e:iteration-density-growth}
    \delta_{i+1}\ge(1+c)\delta_i.
\end{equation}
Set for $1\le j\le k-1$,
\begin{equation}\label{e:iteration-polynomials-correct}
    Q_{i+1}\coloneqq Q_i^{B_i+1},
    \qquad
    P_{i+1,j}(y)
    \coloneqq
    \frac{P_j(Q_{i+1}y)}{Q_{i+1}}.
\end{equation}
We claim that $A_{i+1}$ contains no configuration
\begin{align*}
&x,\ x+P_{i+1,1}(y),\ldots,x+P_{i+1,k-1}(y)\in A_{i+1},
\qquad Q_{i+1}y+1\in\mathbb P,\\
&0,P_{i+1,1}(y),\ldots,P_{i+1,k-1}(y) \textup{ are pairwise distinct}.
\end{align*}
If such a configuration existed, then, for every
$1\le j\le k-1$,
\begin{align*}
a_i+Q_i^{B_i}\bigl(x+P_{i+1,j}(y)\bigr)
&=
a_i+Q_i^{B_i}x
+
Q_i^{B_i}P_{i+1,j}(y)=
a_i+Q_i^{B_i}x
+
P_{i,j}\bigl(Q_i^{B_i}y\bigr),
\end{align*}
since $P_{i,j}\bigl(Q_i^{B_i}y\bigr)
    =
    Q_i^{B_i}P_{i+1,j}(y).$
Since multiplication by $Q_i^{B_i}$ preserves pairwise distinctness, the numbers
$0,P_{i,1}(Q_i^{B_i}y),\ldots, P_{i,k-1}(Q_i^{B_i}y)$
are pairwise distinct.
Set $y_i\coloneqq Q_i^{B_i}y.$
Then
$$
    Q_i y_i+1
    =
    Q_i^{B_i+1}y+1
    =
    Q_{i+1}y+1
    \in\mathbb P.
$$
Thus the affine map $x\longmapsto a_i+Q_i^{B_i}x$
would produce a forbidden configuration in $A_i$ with shift
parameter $y_i$. This proves the claim.

The polynomials $P_{i+1,j}$ also have
$(C,Q_{i+1})$-coefficients 
and the
degrees are unchanged and therefore remain distinct and increasing.
Finally, since $Q_{i+1}$ is a positive power of $Q_i$,
$$
    P^+(Q_{i+1})
    =
    P^+(Q_i)
    \le\sigma,
    \qquad
    \mathcal L(\sigma)\mid Q_{i+1}.
$$
Thus \eqref{e:iteration-saturation-invariant} is preserved. Moreover,
by \eqref{e:iteration-density-growth}, we have
$\delta_{i+1}\ge\delta_i\ge\alpha$, so
\eqref{e:iteration-cutoff-range} remains valid at the next stage. The
only remaining conditions that require quantitative verification are the
three bounds in \eqref{e:iteration-quantitative-hypotheses}.

\emph{The number of increments.}\par
At every successful application of Lemma~\ref{l:case3}, we have $\delta_{i+1}\ge(1+c)\delta_i.$
Consequently,
$$
    \delta_i\ge(1+c)^i\delta_0\ge(1+c)^i\alpha.
$$
Set
\begin{equation}\label{e:number-density-increments}
    n_*
    \coloneqq
    \left\lfloor
    \frac{\log(\alpha^{-1})}{\log(1+c)}
    \right\rfloor+1.
\end{equation}
Then $n_*
    =
    O_{C,d,k}\bigl(\log(\alpha^{-1})+1\bigr)$ and $(1+c)^{n_*}\alpha>1.$
Thus $n_*$ successful density increments would produce a set of
density greater than $1$.

\emph{Quantitative control of the iteration.}\par
To obtain a contradiction, we prove by induction that
\eqref{e:iteration-quantitative-hypotheses} holds at every stage
$0\le m<n_*$. Lemma~\ref{l:case3} may then be applied successively
at all these stages, producing the impossible inequality
$\delta_{n_*}>1$.

Fix $0\le m<n_*$, and suppose inductively that
\eqref{e:iteration-quantitative-hypotheses} has been verified at every
preceding stage $0\le i<m$. Thus Lemma~\ref{l:case3} has been
applied at the stages $0,\ldots,m-1$, and the construction has
produced $A_m\subseteq[N_m]$, with density $\delta_m$ and modulus
$Q_m$. We show that \eqref{e:iteration-quantitative-hypotheses}
also holds at stage $m$.

Since $B_i=O_d(1)$, there is a constant $B_*=O_d(1)$ such that $B_i+1\le B_*$
at every preceding stage. From $Q_{i+1}=Q_i^{B_i+1}$
we therefore obtain $\log Q_i\le B_*^i\log Q_0$ for every $0\le i\le m$.
Using \eqref{e:initial-modulus-bound} and $m<n_*$, it follows that,
uniformly for $0\le i\le m$,
\begin{equation}\label{e:iteration-modulus-bound}
    \log Q_i
    \le
    \alpha^{-O_{C,d,k}(1)}.
\end{equation}
Here
$$
    B_*^i
    \le
    \exp\bigl(O_{C,d,k}(\log(\alpha^{-1})+1)\bigr)
    =
    \alpha^{-O_{C,d,k}(1)}.
$$

We next obtain a lower bound for the lengths $N_i$. For every
preceding stage $0\le i<m$, the second inequality in the induction
hypothesis gives
$$
    Q_i
    \le
    \exp\bigl((\log N_i)^{2c'}\bigr).
$$
Since $2c'<1$, this implies that
$N_iQ_i^{1-d} \longrightarrow\infty$ as $N\to\infty$,
uniformly over $0\le i<m$.
Consequently, the floor in the definition of $M_i$ contributes only
a bounded error, and
$$
    \log M_i
    =
    \frac{1}{d}\log N_i
    -
    \frac{d-1}{d}\log Q_i
    +
    O(1)
    \qquad\text{for all }0\le i<m.
$$
Taking logarithms in \eqref{e:iteration-length-uniform}, we obtain
\begin{equation}\label{e:iteration-log-recurrence}
    \log N_{i+1}
    \ge
    \theta\log N_i
    -
    O_{C,d,k}\left(
        \log Q_i
        +
        \log(\alpha^{-1})
        +
        \sqrt{\log N_i}
    \right)
\end{equation}
for every $0\le i<m$, where $\theta=r/d$.

By \eqref{e:iteration-progression-containment}, we have
$N_{i+1}\le N_i$, and hence $N_i\le N$. Iterating
\eqref{e:iteration-log-recurrence} and using
\eqref{e:iteration-modulus-bound}, we obtain for every $0\le i\le m$ the lower bound
$$
\begin{aligned}
    \log N_i
    \ge{}&
    \theta^i\log N_0-
    O_{C,d,k}\left(
        i\alpha^{-O_{C,d,k}(1)}
        +
        i\log(\alpha^{-1})
        +
        i\sqrt{\log N}
    \right).
\end{aligned}
$$
Moreover, by 
\eqref{e:initial-modulus-small}, $N_0
    \ge
    \frac{N}{Q_0}-1
    \ge
    \frac{N}{2Q_0}$
for $N$ sufficiently large. Consequently,
$$
    \log N_0
    \ge
    \log N-\log Q_0-O(1)
    \ge
    \frac{3}{4}\log N-O(1)
    \gg\log N.
$$
Also, since $i\le n_*
    =
    O_{C,d,k}\bigl(\log(\alpha^{-1})+1\bigr),$
we have $\theta^i\ge\alpha^{O_{C,d,k}(1)}.$
Combining these estimates and absorbing powers of
$\log(\alpha^{-1})$ into $\alpha^{-O_{C,d,k}(1)}$, we obtain,
uniformly for every $0\le i\le m$,
\begin{equation}\label{e:iteration-final-scale}
\begin{aligned}
    \log N_i
    \gg{}&
    \alpha^{O_{C,d,k}(1)}\log N
    -
    \alpha^{-O_{C,d,k}(1)}-
    O_{C,d,k}\left(
        \log(\alpha^{-1})\sqrt{\log N}
    \right).
\end{aligned}
\end{equation}
Choose $c_*>0$ sufficiently small in
terms of $C,d,k$. Since $\alpha>(\log N)^{-c_*},$
the terms in \eqref{e:iteration-final-scale} satisfy
$$
\begin{aligned}
    \alpha^{O_{C,d,k}(1)}\log N
    &\ge
    (\log N)^{1-O_{C,d,k}(c_*)},\\
    \alpha^{-O_{C,d,k}(1)}
    &\le
    (\log N)^{O_{C,d,k}(c_*)},\\
    \log(\alpha^{-1})\sqrt{\log N}
    &\ll
    (\log\log N)\sqrt{\log N}.
\end{aligned}
$$
After decreasing $c_*$ if necessary, the first term in
\eqref{e:iteration-final-scale} dominates the two error terms.
Therefore there exists a constant
$\gamma=\gamma(C,d,k)>0$ such that
\begin{equation}\label{e:iteration-scale-lower-bound}
    \log N_i\ge(\log N)^\gamma
\end{equation}
for every $0\le i\le m$.
We may decrease $c_*$ once more so that
\eqref{e:iteration-modulus-bound} and
\eqref{e:iteration-scale-lower-bound} imply $\log Q_m
    \le
    (\log N_m)^{2c'},$
and hence $Q_m
    \le
    \exp\bigl((\log N_m)^{2c'}\bigr).$
Similarly, since
$$
    \sigma
    =
    \left\lceil
    \mathbf K_{d,k}\alpha^{-\mathbf C_{d,k}}
    \right\rceil
    \le
    \alpha^{-O_{C,d,k}(1)},
$$
we have $\sigma\le(\log N_m)^{c'}.$

Finally, choosing $c_*<\gamma/\mathbf C_{d,k}^{2}$, we obtain
$$
\begin{aligned}
    (\log N_m)^{-1/\mathbf C_{d,k}^{2}}
    &\le
    (\log N)^{-\gamma/\mathbf C_{d,k}^{2}}<
    (\log N)^{-c_*}
    <
    \alpha
    \le\delta_m.
\end{aligned}
$$
Thus all three estimates in
\eqref{e:iteration-quantitative-hypotheses} hold at stage $m$.

\emph{The final density contradiction.}\par
Since $m<n_*$ was arbitrary, Lemma~\ref{l:case3} remains applicable
through stage $n_*-1$. The iteration therefore produces
$A_{n_*}$ with
$$
    \delta_{n_*}
    \ge
    (1+c)^{n_*}\delta_0
    \ge
    (1+c)^{n_*}\alpha
    >
    1,
$$
which is impossible. Hence the assumption
\eqref{e:main-contradiction-density} cannot hold. It follows that
$$
    |A|
    \ll
    N(\log N)^{-c_*},
$$
as required.
\end{proof}

\bibliography{refs}
\bibliographystyle{plain}

\end{document}